\documentclass{amsart}
\usepackage{amsmath}
\usepackage{amsthm}
\usepackage{amssymb}
\usepackage{enumerate}
\usepackage{hyperref}






\newtheorem{theorem}{Theorem}[section]
\newtheorem{corollary}{Corollary}
\newtheorem*{main}{Main Theorem}
\newtheorem{lemma}[theorem]{Lemma}

\newtheorem{conjecture}{Conjecture}
\newtheorem{problem}{Problem}
\theoremstyle{definition}
\newtheorem{definition}[theorem]{Definition}
\newtheorem{remark}{Remark}

\newtheorem{example}{Example}
\newtheorem*{RP}{Rearrangement Problem (RP)}
\newtheorem*{RRP}{Rotation-invariant Rearrangement Problem (RRP)}

\newcommand{\vek}[1]{\mathbf{#1}}
\newcommand{\mset}[1]{\mathfrak{#1}}
\newcommand{\abs}[1]{\lvert#1\rvert}
\newcommand{\innpro}[2]{\langle #1,#2\rangle}
\newcommand{\mat}[1]{\mathbf{#1}}

\newcommand{\gauss}[3]{\genfrac{[}{]}{0pt}{}{#1}{#2}_{#3}}

\newcommand{\ksubspaces}[2]{\genfrac{[}{]}{0pt}{}{#1}{#2}}

\DeclareMathOperator{\PG}{PG}

\DeclareMathOperator{\Aut}{Aut}

\DeclareMathOperator{\GL}{GL}

\DeclareMathOperator{\Hom}{Hom}
\DeclareMathOperator{\End}{End}

\DeclareMathOperator{\rank}{rk}

\DeclareMathOperator{\kernel}{Ker}
\DeclareMathOperator{\image}{Im}

\newcommand{\imat}{\mathbf{I}}
\newcommand{\zentrum}{\mathrm{Z}}
\newcommand{\trace}{\mathrm{Tr}}

\newcommand{\opp}{\circ}

\newcommand{\wham}{\mathrm{w}_{\mathrm{Ham}}}
\newcommand{\dham}{\mathrm{d}_{\mathrm{Ham}}}

\newcommand{\sdist}{\mathrm{d}_{\mathrm{s}}}

\newcommand{\smax}{\mathrm{A}}

\newcommand{\F}{\mathbb{F}}
\DeclareMathOperator{\GF}{GF}

\newcommand{\Z}{\mathbb{Z}}
\newcommand{\U}{\mathrm{U}}

\newcommand{\dickson}{\delta}
\newcommand{\sickson}{\sigma}
\newcommand{\graph}{\Gamma}
\newcommand{\gabidulin}{\mathcal{G}}
\newcommand{\lgabidulin}{\mathcal{L}}

\newcommand{\spol}{\mathrm{s}}

\newcommand{\adj}{\ast}

\newcommand{\frob}{\varphi}

\newcommand{\littleo}{\mathrm{o}}
\newcommand{\Wtilde}{\widetilde{W}}
\newcommand{\tp}{\mathsf{T}}

\newcommand{\name}[1]{\textsc{#1}}

\title[The Expurgation-Augmentation Method]{The Expurgation-Augmentation
    Method for Constructing Good Plane Subspace Codes}

\author[Jingmei Ai, Thomas Honold, and Haiteng Liu]{}

\subjclass{Primary: 94B05, 05B25, 51E20; Secondary: 51E14, 51E22, 51E23.}

\keywords{Galois geometry, subspace code, linear operator channel,
  network coding, Gabidulin code, echelon-Ferrers construction,
  expurgation-augmentation, subspace polynomial, Dickson invariants.}

\email{somnus@zju.edu.cn}
\email{honold@zju.edu.cn}
\email{liuhaiteng@zju.edu.cn}

\thanks{Research supported by the National Natural Science Foundation
  of China under Grants 60872063 and 61571006}

\begin{document}
\maketitle

\centerline{\scshape Jingmei Ai, Thomas Honold, and Haiteng Liu}
\medskip
{\footnotesize
  \centerline{Department of Information Science and Electronics
    Engineering}
  \centerline{Zhejiang University, 38 Zheda Road, 310027 Hangzhou, China}
} 

%

\bigskip


\begin{abstract}
  As shown in \cite{smt:fq11proc}, one of the five isomorphism types
  of optimal binary subspace codes of size $77$ for packet length
  $v=6$, constant dimension $k=3$ and minimum subspace distance $d=4$
  can be constructed by first expurgating and then augmenting the
  corresponding lifted Gabidulin code in a fairly simple way. The
  method was refined in \cite{lt:0328,mt:alb80} to yield an
  essentially computer-free construction of a currently best-known
  plane subspace code of size $329$ for $(v,k,d)=(7,3,4)$. In this paper we
  generalize the expurgation-augmentation approach to arbitrary packet
  length $v$, providing both a detailed theoretical analysis of our
  method and computational results for small parameters. As it turns
  out, our method is capable of producing codes larger than those
  obtained by the echelon-Ferrers construction and its variants. We
  are able to prove this observation rigorously for packet lengths
  $v\equiv 3\pmod{4}$.
\end{abstract}

\section{Introduction}\label{sec:intro}

Let $V$ be a vector space of finite dimension $v$ over the finite
field $\mathbb{F}_q$. The lattice $\PG(V)$ of all subspaces of $V$,
relative to the operations $X\wedge Y=X\cap Y$ (meet) and
$X\vee Y=X+Y$ (join) is called the projective (coordinate) geometry
associated with $V$ and forms the unique (up to isomorphism) model of
$\PG(v-1,\F_q)$, the Desarguesian projective geometry of geometric
dimension $v-1$ and order $q$. Moreover, $\PG(V)$ forms a metric space
with respect to the \emph{subspace distance} defined by
$\sdist(X,Y)=\sdist(X+Y)-\sdist(X\cap Y)=\dim(X)+\dim(Y)-2\dim(X\cap
Y)$.
A code in this metric space is called a \emph{$q$-ary subspace
  code}.\footnote{In the special case $V=\F_q^v$ one also refers to
  $V$, $v$, $q$ as \emph{packet space}, \emph{packet length} and
  \emph{packet alphabet size}, respectively. This is motivated by the
  applications in network coding.}  Such codes are of interest in the
emerging area of error-resilient network coding, where they can be
used as channel codes for the linear operator channel introduced by
Koetter and Kschischang \cite{koetter-kschischang08}; cf.\ also
\cite{kschischang12,guang-zhang14,etzion-storme16}.  The so-called
\emph{Main Problem of Subspace Coding}, motivated by the application
to network coding and modelled after the Main Problem of classical
coding theory, asks for the determination (and, as a secondary goal,
the classification) of subspace codes of maximum size when the
remaining parameters are fixed. In contrast with its classical
counterpart, however, much less is known about the Main Problem of
Subspace Coding, in particular in the general mixed-dimension
case. Fur the current state of knowledge we refer to the online tables
at \texttt{subspacecodes.uni-bayreuth.de}, a recently established
service of the research group at Bayreuth University;
cf.\ also \cite{heinlein-kiermaier-kurz-wassermann16}.  For the
mixed-dimension case, see \cite{smt:alc15proc} and the references
therein.

In this paper we restrict ourselves to (a subcase of) the
constant-dimension case of the Main Problem, which is somewhat more
accessible and, because of its geometric significance, has been
investigated earlier by researchers in Finite Geometry and solved in
special cases. A subspace code in $V$ is said to be a
\emph{constant-dimension code} if all its members have the same
dimension $k$. For the parameters of a constant-dimension code
$\mathcal{C}$ we use the notation $(v,M,d;k)$ or $(v,M,d;k)_q$, where
$q$, $v$, $k$ have the same meaning as above,
$M=\#\mathcal{C}=\abs{\mathcal{C}}$, and $d$ denotes the minimum
(subspace) distance of $\mathcal{C}$. The Main Problem, restricted to
the constant-dimension case, is to determine the maximum sizes
$\smax_q(v,d;k)$ of $(v,M,d;k)_q$ codes.

For $X,Y\in\ksubspaces{V}{k}$, the set of $k$-dimensional subspaces
(``$k$-subspaces'') of $V$, the formula for the subspace
distance reduces to $\sdist(X,Y)=2k-2\dim(X\cap Y)\in 2\Z$, and the
inequality $\sdist(X,Y)\geq d=2\delta$ is equivalent to
$\dim(X\cap Y)<t$ with $t=k-\delta+1$. Hence a $(v,M,d;k)_q$
constant-dimension code $\mathcal{C}$ with ambient vector space $V$
may also be viewed as a set of ($k-1$)-flats in $\PG(V)$ with
$\#\mathcal{C}=M$ and the following property: $t=k-\delta+1$ is the
smallest integer such that every ($t-1$)-flat of $\PG(V)$
($t$-subspace of $V$) is contained in at most one member
of $\mathcal{C}$. Finding a $(v,M,2\delta;k)_q$ code of maximum size thus
translates exactly into the packing problem for the incidence
structure of $t$-subspaces versus $k$-subspaces of $V$,
provided we identify ``blocks'' of this incidence structure (i.e.\
subspaces $X\in\ksubspaces{V}{k}$) with sets of ``points'' (the set of
all $T\in\ksubspaces{V}{t}$ satisfying $T\subseteq X$).

The special case $t=1$ of the restricted Main Problem asks for the
maximum number of $k$-subspaces of $V$ that are pairwise disjoint as
point sets. In Finite Geometry such sets of subspaces are known as
\emph{partial spreads} and have been the subject of extensive research
since the fundamental work of Beutelspacher~\cite{beutelspacher75};
see \cite{eisfeld-storme00} for a survey. However, apart from the
``spread case'' $v\equiv 0\pmod{k}$ and the line case $k=2$ already
solved in~\cite{beutelspacher75}, the numbers $\smax_q(v,2k;k)$ remain
unknown in general. Only for certain specific parameter combinations
they have been determined exactly---in the case $v\equiv 1\pmod k$
\cite{beutelspacher75}, the case $q=2$, $k=3$
\cite{el-zanati-jordon-seelinger-sissokho-spence10}, and recently in
the case $q=2$, $v\equiv 2\pmod k$ \cite{kurz15}.

Predating the work of Koetter-Kschischang \cite{koetter-kschischang08}
on network coding, the Main Problem for general constant-dimension subspace
codes has already been formulated and investigated by
Metsch in the language of Finite Geometry (see \cite{metsch00}, in
particular Section~4) and by Wang, Xing and Safavi-Naini in their work
on linear authentication codes (see
\cite[Th.~4.1]{wang-xing-safavi-naini03} and the discussion following
this theorem). The publication of \cite{koetter-kschischang08} and the
related work
\cite{silva-kschischang-koetter08,silva-kschischang-koetter10} sparked
a lot of research interest in constant-dimension
subspace coding, focusing either on the
derivation of upper bounds for the numbers $\smax_q(v,2\delta;k)$ or
on explicit code constructions, which provide lower bounds for
$\smax_q(v,2\delta;k)$. An non-authoritative, non-exhaustive selection of
additional references is \cite{kohnert-kurz08,xia-fangwei09,etzion-vardy11a,etzion13b,cossidente-pavese16,storme-nakic16}.

The exact determination of $\smax_q(v,2\delta;k)$ seems to be a very difficult
problem even for moderate parameter sizes. To the best of our
knowledge, there are currently only two
parameter sets $(v,2\delta;k)_q$ with $1<t=k-\delta+1<k$,
where $\smax_q(v,2\delta;k)$ is known exactly: $\smax_2(6,4;3)=77$
\cite{smt:fq11proc} and $\smax_2(13,4;3)=1,597,245$
\cite{braun-etal13}. The $(13,1597245,4;3)_2$ code---in
fact there exist many non-isomorphic codes with these parameters---is
particularly remarkable, since it forms the first nontrivial example of a
Steiner system over a finite field (a $2$-analogue of the projective
plane of order $3$).

Our contribution to the restricted Main Problem in this paper pertains
also to the case $q=2$, $k=3$, $d=4$. In geometric terms, we consider
sets of planes in a binary projective geometry $\PG(V)\cong\PG(v-1,\F_2)$
mutually intersecting in at most a point. We refer to these sets as
(binary) \emph{plane subspace codes}, and our interest is in finding the
largest plane subspace code(s) over $\F_2$ with fixed packet length
$v$. Before stating our main result, we shall briefly review previous
work on binary plane subspace codes.

The exact results known in this case are the ``trivial'' (inasmuch as
it reduces to a case with $t=1$) $\smax_2(5,4;3)=\smax_2(5,4;2)=9$,
and the two already mentioned results $\smax_2(6,4;3)=77$,
$\smax_2(13,4;3)=1,597,245$. In the smallest open case $v=7$ we know
$329\leq\smax_2(7,4;3)\leq 381$.  The lower bound stems from the
computer-aided group-invariant construction in \cite{braun-reichelt14}
(cf.\ \cite{lt:0328,mt:alb80} for alternative constructions), and the
upper bound is the size of a putative $2$-analogue of the projective
plane of order $2$, whose existence is still undecided (despite the
known solution in the much larger case $v=13$). In addition, for
$7<v<13$ strong lower bounds for $\smax_2(v,4;3)$ are known from the
computational work in \cite{braun-ostergard-wassermann15}, which
employs dedicated combinatorial optimization techniques for
group-invariant subspace codes.

The best known constructive\footnote{For large $v$ a non-constructive
  lower bound, which asymptotically matches the upper bound stated in
  the next paragraph, has been shown in \cite{blackburn-etzion12}.}
lower bound for general $v$ is provided by the echelon-Ferrers
construction and its variants
\cite{etzion-silberstein09,trautmann-rosenthal10,etzion-silberstein13,silberstein-trautmann15}. It
asserts that
\begin{align*}
  \label{eq:LMRD}
  \smax_2(v,4;3)&\geq 2^{2(v-3)}+\gauss{v-3}{2}{2}
  =2^{2v-6}+\frac{(2^{v-3}-1)(2^{v-4}-1)}{3}\\
  &=2^{2v-6}+2^{2v-9}+2^{2v-11}+\dots+2^1+1-2^{v-4}
\end{align*}
for $v\leq 11$, with a slightly inferior bound for larger $v$. The
quantity $2^{2(v-3)}+\gauss{v-3}{2}{2}$ also provides an upper bound
for any $(v,M,4;3)_2$ code that contains a lifted maximum rank
distance code (LMRD code) as a subcode and hence represents
essentially the optimum achievable by the echelon-Ferrers construction
and its variants \cite{etzion-silberstein13}. Subsequently we will
refer to this upper bound as the \emph{LMRD code bound}.

The best known upper bound for unrestricted $(v,M,4;3)_2$ codes, a
consequence of the known maximum size of partial line spreads in
$\PG(v-1,\F_2)$, is
\begin{equation}
  \label{eq:ub}
   \smax_2(v,4;3)\leq
   \begin{cases}
     \left\lfloor\frac{(2^v-1)(2^{v-1}-1)}{21}\right\rfloor&\text{for $v\equiv
       1\pmod{2}$},\\[1ex]
     \left\lfloor\frac{(2^v-1)(2^{v-1}-5)}{21}\right\rfloor&\text{for $v\equiv
       0\pmod{2}$},
   \end{cases}
\end{equation}
and is substantially larger.\footnote{The maximum size of partial line
  spreads in $\PG(v-1,\F_q)$ is known for all prime powers $q>1$
  \cite{beutelspacher75}, leading to an analogous (best known) upper
  bound for $\smax_q(v,4;3)$.}  It can be verified that the base-$2$
representation of the upper bound has the form
$2^{2v-6}+2^{2v-7}+2^{2v-12}+2^{2v-13}+\dots+2^{2v-6s}+2^{2v-6s-1}+\text{smaller
  terms}$,
where $s=\lfloor(v-3)/6\rfloor$.\footnote{We can also express these
  bounds asymptotically for $v\to\infty$ as
  $2^{2v-6}\left(\frac{7}{6}+\littleo(1)\right)\leq\smax_2(v,4;3) \leq
  2^{2v-6}\left(\frac{32}{21}+\littleo(1)\right)$.}

Our main theorem, stated below, improves upon the echelon-Ferrers
construction and all its variants for $q=2$, $k=3$ and
infinitely many packet lengths
$v$. The codes are constructed using a generalization of the
expurgation-augmentation method introduced in
\cite{smt:fq11proc,lt:0328,mt:alb80}. Moreover, the augmentation step
can be done in such a way that the resulting codes are invariant under
a ($v-3$)-dimensional Singer subgroup of $\GL(v,2)$ acting trivially
on the complementary three coordinates. More precisely, the codes have
ambient space $V=W\times\F_{2^n}$, where $n=v-3$ and $W$ is a certain
$3$-dimensional $\F_2$-subspace of $\F_{2^n}$, and are invariant under
the group $\Sigma_v\leq\GL(V)$ consisting of all maps of the form
$(x,y)\mapsto(x,ry)$ with $r\in\F_{2^n}^\times$.

\begin{main}
  \label{thm:main}
  \begin{enumerate}[(i)]
  \item\label{thm:main:n=4mod8}
    For $v\equiv 7\pmod{8}$, there exists a $\Sigma_v$-invariant
    $(v,M,4;3)_2$ subspace code with
    \begin{equation*}
      \label{eq:main:n=4mod8}
      M\geq 2^{2(v-3)}+\frac{9}{8}\gauss{v-3}{2}{2},
    \end{equation*}
    and consequently we have $\smax_2(v,4;3)\geq
    2^{2(v-3)}+\frac{9}{8}\gauss{v-3}{2}{2}$ in this case.
  \item\label{thm:main:n=0mod8} For $v\equiv 3\pmod{8}$, $v\geq 11$,
    there exists a $\Sigma_v$-invariant $(v,M,4;3)_2$ subspace code
    with
    \begin{equation*}
      \label{eq:main:n=0mod8}
      M\geq 2^{2(v-3)}+\frac{81}{64}\gauss{v-3}{2}{2},
    \end{equation*}
    and consequently we have $\smax_2(v,4;3)\geq
    2^{2(v-3)}+\frac{81}{64}\gauss{v-3}{2}{2}$ in this case.
  \end{enumerate}
\end{main}

The route to our main theorem is long and involved, at least when
following the chronological order in which the various pieces were put
together. We have deliberately left this order untouched, since it
captures best the line of argument and the motivation for each
subsequent step. In order to make the paper essentially
self-contained, we provide an exposition of the basic and refined
expurgation-augmentation method as developed in
\cite{smt:fq11proc,lt:0328,mt:alb80}, including the key examples for
$q=2$. The theoretical analysis is supplemented\footnote{In fact the
  main result would have never been discovered without the
  computational data, which suggested the route pursued in later
  sections, and the need to make our computation more efficient.} by
extensive computations, which were done using the computer algebra
system SageMath (\texttt{www.sagemath.org}).\footnote{SageMath has
  proved to be an extremely versatile tool in our present research.}
In some cases the largest
subspace codes obtained during this optimization process considerably
exceed the bounds stated in the main theorem, and for packet lengths
$v>13$ they probably form the largest subspace codes explicitly known at
present. For details we refer to Table~\ref{tbl:netgain}.

In the remainder of this introduction we will provide a brief overview
of the route to the main theorem and at the same time explain how the rest
of this paper is organized.

The background of the expurgation-augmentation method, its previous
developments, and the adaption to packet lengths $v>7$ (the initial
stage of our research) is described in Sections~\ref{sec:prelim},
\ref{sec:basic}, and~\ref{sec:refined}. A key ingredient, dating back
to \cite{smt:fq11proc}, is a particular choice of the ambient space
$V\cong\F_2^v$, which takes the structure of the optimal
$(6,77,4;3)_2$ subspace codes into account and, similar to the Singer
representation, allows for using the multiplicative structure of a
large finite field: We always take $V$ as $W\times\F_{2^n}$, where
$n=v-3$ and $W$ is some $3$-subspace of $\F_{2^n}$, i.e., a plane in
$\PG(\F_{2^n})$.\footnote{``$\PG(\F_{2^n})$'' will be used as a
  shorthand for $\PG(\F_{2^n}/\F_2)$, the projective geometry derived
  from the field extension $\F_{2^n}/\F_2$.}

At the beginning, with a rough goal of generalizing the results of
\cite{mt:alb80}, we made computational experiments in the case $v=8$;
these were all but encouraging.\footnote{For $v=8$ the best subspace codes
  obtained by expurgation-augmentation have size $1024+93=1117$. Even
  if we were lucky to extend the codes by the theoretical maximum of
  $\gauss{5}{2}{2}=155$ further planes, we would still remain way below
  the best known $(7,1312,4;3)_2$ codes at that time. The currently
  best known $(7,M,4;3)_2$ has size $M=1326$; cf.\
  \cite{braun-ostergard-wassermann15}.}
But in fact, $v=8$ and $v=10$ are the only cases within the range
$v\in\{7,8,\dots,16\}$ now covered by the computational part of our work,
in which the refined (``rotation-invariant'') expurgation-augmentation
method is inferior to the echelon-Ferrers construction and its
variants. Our experiments used results in \cite[Sect.~5]{mt:alb80},
which express the number of planes that can be ``locally'' added to
the expurgated lifted Gabidulin code without decreasing the minimum
distance as the number of distinct values of a certain numerical 
invariant for planes in $\PG(\F_{2^n})$, named
\emph{$\sickson$-invariant} in \cite{mt:alb80}. 

Although the \emph{$\sickson$-invariant} and some of its properties
generalize to arbitrary $v$, the case $v\geq 8$ differs
fundamentally from the case $v=7$ considered in
\cite{mt:alb80} in the following respects:
\begin{itemize}
\item For $v\geq 8$ the Gabidulin code and the $\sickson$-invariant
  generally depend on the plane $W$ used in $V=W\times\F_{2^n}$. As a
  consequence, all planes $W$ in $\PG(\F_{2^n})$ have to be taken into
  account for the subspace code optimization and there will certainly
  be no analogue of the nice explicit formula for the
  $\sickson$-invariant established for $v=7$ in
  \cite[Lemma~7]{mt:alb80}.\footnote{Subsequently we will write
    $\gabidulin_W$, $\sickson_W$ to indicate the dependence of the
    Gabidulin code, respectively, $\sickson$-invariant on $W$.}
\item For $v\geq 8$ there is no longer a canonical choice for the
  expurgated Gabidulin code in the refined
  expurgation-augmentation method. Instead there are $2^{v-6}-1$
  minimal subsets of $\gabidulin_W$, any combination of which can be
  removed to obtain an expurgated Gabidulin code. The subspace code
  optimization algorithm should consider all such combinations and
  select the best one.
\end{itemize}

With these two guidelines at hand, we created a simple prototype of
the subspace code optimization algorithm, which generated planes $W$
randomly, evaluated the associated invariant $\sickson_W$ on all
planes intersecting $W$ in a line,\footnote{The domain of $\sickson_W$
  consists precisely of those planes.} and computed an optimal
solution of the resulting optimization problem---maximize the
difference between the image size of $\sickson_W$ and the (suitably
normalized\footnote{The total number of planes removed from
  $\lgabidulin_W$ is divided by the number of points on the special
  flat $S$ defined in Section~\ref{sec:prelim}, in order to match the
  present ``local'' point-of-view.}) number of planes removed from
$\lgabidulin_W$, subsequently referred to as the \emph{local
  net gain}---in a brute-force manner by exhaustive search through all
combinations of minimal subsets of $\gabidulin_W$. With this algorithm
and the additional observation that the problem setting is invariant
under a fairly large group of collineations of $\PG(\F_{2^n})$ acting
on the set of planes $W$, we were able to solve the cases $v=8,9,10$
completely and do a partial search for length $v=11$. It turned out
that the initial estimate, based on $v=8$, had been too pessimistic;
for $v=9,11$ our algorithm found solutions which exceeded the LMRD
code bound.

The next step was to replace the exhaustive search through all
$2^{2^{v-6}-1}-1$ nonempty combinations of minimal subsets, which is
clearly prohibitive for $v=11$, by something more efficient. For this
we inspected the data structure containing the computed values of
$\sickson_W$, a matrix of size $(2^{n-3}-1)\times(2^n-1)$, $n=v-3$,
indexed with the solids ($4$-subspaces) $T$ in $\PG(\F_{2^n})$
containing $W$ and the elements $a\in\F_{2^n}^\times$, which has as
$(i,j)$-entry the number of planes $E$ such that $E+W=T_i$ and
$\sickson(E)=a_j$.\footnote{The solids correspond to the minimal
  subsets of $\gabidulin_W$ that can be combined. Hence it is not
  necessary to distinguish between different planes in $T_i$ at this
  stage.}  The matrix turned out to have a very special
structure. Obviously it is divided into two parts representing
elements $a\in\F_{2^n}^\times$ with $\leq 1$ and $>1$ preimages $E$
under $\sickson_W$, respectively, and an optimal solution of the
optimization problem must include all planes $E$ of the first
kind.\footnote{That is, there is no plane $E'\neq E$ intersecting $W$
  in a line and such that $\sickson(E)=\sickson(E')$.} Hence attention
can be restricted to the second part, which turned out to be a square
matrix of order $2^{n-3}-1$ with columns indexed by an
($n-3$)-subspace (``collision space'') of $\F_{2^n}$. This submatrix
and subspace will be called \emph{collision matrix}, respectively,
\emph{collision space} of $W$ or $\sickson_W$; cf.\
Theorem~\ref{thm:cspace} and Definition~\ref{dfn:cspace}. As it turned
out, collision matrices have only $3$ different column shapes with
nonzero entry patterns $4^1$, $2^3$, $1^7$ and such that the supports
of each column, viewed as a set of points in $\PG(\F_{2^n}/W)$, forms
a subspace (point, line, or plane) as well. The proof of these
peoperties, which were first noticed through experiments, is given in
Theorem~\ref{thm:C}.

The rather difficult proofs of the preceding (and also subsequent)
observations use properties of so-called subspace polynomials and in
particular the linear (i.e., degree one) coefficients of such
polynomials, which are analogous to the elementary symmetric
polynomials $\sigma_k(X)=X_1X_2\dotsm X_k$. The basic theory of
subspace polynomials and their coefficients, the ``Dickson
invariants'', has been developed long ago by \name{L.E.~Dickson} and
\name{O.~Ore}, cf.\ \cite{dickson11,ore33a}. We provide a brief
account of this theory in Section~\ref{sec:allthat}, tailored to the
case $q=2$ and including some new (or at least less well-known)
results, which are needed in subsequent sections. 

Subspace polynomials have recently been used in other
contexts~\cite{ben-sasson-etal10,ben-sasson-kopparty12,cheng-gao-wan12},
including a direct application to subspace
coding~\cite{ben-sasson-etal14}. We also offer a tiny but curious
result in this direction (Corollary~\ref{cor:allthat}).

Armed with the new theoretical insight we were able to reformulate the
local net gain maximization as a combinatorial optimization problem
with a fairly rich structure provided by the collision matrix
(Theorem~\ref{thm:copt}, Corollary~\ref{cor:copt}),\footnote{The
  problem bears some similarity to the set cover problems studied in
  Theoretical Computer Science; see \cite{garey-johnson79}.} and solve
it completely for packet lengths $v\leq 13$. This and further
non-exhaustive computations for $v\in\{14,15,16\}$ confirmed that the
expurgation-augmentation method produces codes larger than the LMRD
code bound for $v\neq 8,10$; see Section~\ref{sec:comp}.

The last (but not least) steps towards the main theorem were the
following: Along with the maximum net gain computations we had recorded
some statistical data for the collision matrices, with the goal of
understanding which algebraic properties of $W$ are responsible for a
large maximum net gain. From this we noticed that the ``best'' planes
were those with an entry ``4'' in their collision matrix, a property
that can be described algebraically (Theorem~\ref{thm:cij=4}), and
with the largest number of so-called ``missing points'' (this concept
is defined at the beginning of Section~\ref{sec:cont}) in their
collision space. The best example for $v=11$, in which $W$ is equal to
the trace-zero subspace of the subfield $\F_{16}\subset\F_{2^8}$ and
whose $31\times 31$ collision matrix is shown in
Figure~\ref{fig:CWn=8}, helped us to understand that the geometric
configuration formed by the multiset of missing points in the collision
space determines the row-sum spectrum of the collision matrix and hence, via
Corollary~\ref{cor:copt}, to some extent controls the maximum
achievable net gain. The precise relation is described in
Theorem~\ref{thm:HZ}. Moreover, the trace-zero subspace of $\F_{16}$ provides a
natural candidate for $W$ in all cases $v\equiv 3\pmod{4}$, since for
such $v$ the field $\F_{2^n}$ contains $\F_{16}$. The final steps
where to show that the maximum net gain achievable with the trace-zero
subspace satisfies the bounds stated in the main theorem. This is
accomplished in Section~\ref{sec:bingo}, first in the case $v\equiv
7\pmod{8}$, which is considerably easier, (Theorem~\ref{thm:n=4mod8})
and then in the case $v\equiv 3\pmod{8}$ (Theorem~\ref{thm:n=0mod8}).

Theorem~\ref{thm:HZ} encompasses the nice fact that the row-sum
spectrum of a collision matrix can be computed in much the same way as
the weight distribution of a linear code from geometric information
about an associated multiset of points in some projective
geometry. This connection, first described after Theorem~\ref{thm:HZ}
and used in the proofs of Theorems~\ref{thm:n=4mod8}
and~\ref{thm:n=0mod8}, is made more explicit in
Section~\ref{sec:code}. Here we show, by bounding the maximum net gain
in terms of a certain quantity (``code sum'') and estimating this code
sum for all projective binary linear $[\mu,k]$ coes of length
$\mu\leq 7$, that the non-exhaustively computed maximum net gains for
$v=14,15$ in Table~\ref{tbl:netgain} represent the true maximum
(Theorem~\ref{thm:n=11,12}).

Based on the accumulated computational data, we conjecture that the largest
subspace codes obtained by the
expurgation-augmentation method exceed the LMRD code bound for all
sufficiently large packet lengths $v$
(Conjecture~\ref{conj:LMRD}). For odd $v$ this conjecture is strongly supported
by computational data on the distribution of missing points in the
collision space; see the end of Section~\ref{sec:bingo}.

The paper concludes with Section~\ref{sec:conc}, which provides a
discussion of some in a sense ``neglected'' aspects of our work and
gives some suggestions for future research.

Some familiarity with basic concepts and terminology from Finite
Geometry is indispensable for understanding this paper. The relevant
background information can be found in \cite{dembowski68},
\cite{hirschfeld98}, or \cite{nova2011}. Regarding notation,
we only mention at this point the abbreviation
$\trace(x)=\trace_{\F_{2^n}/\F_2}(x)=x+x^2+x^4+\dots+x^{2^{n-1}}$ for
the absolute trace of $\F_{2^n}$, which is used frequently in the
sequel. All other non-standard notation will be explained on its first
occurrence.


\section{Preliminaries on Plane Subspace Codes}\label{sec:prelim}

Throughout this section let $\mathcal{C}$ be a plane subspace code
with parameters $(v,M,4;3)$ and ambient space $V$, where
w.l.o.g.\ $v\geq 6$. Since planes in
$\mathcal{C}$ do not have a line in common, $\mathcal{C}$ covers
(i.e., its members contain) precisely $7M$ lines of
$\PG(V)$. Conversely, if we know the number $l$ of lines covered by
$\mathcal{C}$, we can recover the size of $\mathcal{C}$ as
$M=l/7$. Hence maximizing $M$ and $l$ are equivalent problems.

This point of view is especially useful when looking at lifted maximum
rank distance codes (LMRD codes) with these parameters. Such codes are
obtained from maximum rank-distance-$2$ matrix codes in
$\F_2^{3\times(v-3)}$, e.g.\ Gabidulin codes, by the \emph{lifting
  construction} $\mat{A}\mapsto\langle(\imat_3|\mat{A})\rangle$
(``prepending the $3\times 3$ identity matrix to $\mat{A}$ and then
taking the row space'') and have size $2^{2(v-3)}$. The planes
obtained in this way are disjoint from the special ($v-3$)-dimensional
subspace $S=\{\vek{x}\in\F_2^v;x_1=x_2=x_3=0\}$ and, as remarked
above, cover $7\cdot 2^{2(v-3)}$ lines of $\PG(\F_2^v)$. On the other
hand, standard counting facts in finite projective spaces imply that
the total number of lines in $\PG(\F_2^v)$ disjoint from $S$ is also
$7\cdot 2^{2(v-3)}$. Hence any LMRD code in $\F_2^v$ forms a perfect
cover of the set of lines disjoint of $S$ (and conversely, any such
perfect cover arises from a maximum rank-distance-$2$ matrix code in
the way described).

This leads directly to the LMRD code bound mentioned in
Section~\ref{sec:intro}: If $\mathcal{C}$ contains an LMRD code then
it cannot contain planes meeting $S$ in a point (since these contain lines
disjoint from $S$) and hence contains at most $\gauss{v-3}{2}{2}$
further planes, one for each line contained in $S$.\footnote{Since
  a plane contained in $S$ covers $7$ lines, it is also clear that
  $\mathcal{C}$ meets the bound with equality iff it has a subcode
  forming a perfect cover of the lines in $S$.}

In order to overcome the LMRD code bound, we should therefore start
with a smaller set of planes disjoint from $S$. It is reasonable to
choose this set as a large subcode of a Gabidulin code, and it has
been shown in \cite{smt:fq11proc,lt:0328,mt:alb80} that this idea can
indeed be put to work for $v=6,7$. The method, which we call
\emph{expurgation-augmentation}---remove some ``old'' planes from the
Gabidulin code (``expurgate'' the Gabidulin code) and add in turn some
``new'' planes meeting $S$ in a point (``augment'' the expurgated
Gabidulin code)---, is described in the next section. Crucial for the
success of the method is a particular choice of the ambient space $V$,
which involves a large extension field of $\F_2$ and allows us later
to employ properties of linearized polynomials and the multiplicative
structure of the extension field. This choice of $V$ will be discussed in the
remainder of this section.


The ambient space for the expurgation-augmentation method is taken as
$V=W\times\F_{2^n}$, $n=v-3$, for some $3$-dimensional
$\F_2$-subspace $W$ of $\F_{2^n}$.\footnote{Note that $v-3\leq n$ in view
  of our assumption $v\geq 6$.}  In this model the special
($v-3$)-subspace is $S=\{0\}\times\F_{2^n}$, and $W$ is
represented within $V$ as $\Wtilde=W\times\{0\}$.

The
corresponding Gabidulin code can be defined in a basis-independent
manner as $\gabidulin_W=\{a_0x+a_1x^2;a_0,a_1\in\F_{2^n}\}$, where
$a_0x+a_1x^2$ is used as an abbreviation for the $\F_2$-linear map
$W\to\F_{2^n}$, $x\mapsto a_0x+a_1x^2$. The lifted Gabidulin code in
this model is $\lgabidulin_W=\{\graph_f;f\in\gabidulin_W\}$, where
$\graph_f=\bigl\{(x,f(x));x\in W\bigr\}$ denotes the graph of $f$ (in
the sense of Real Analysis, if you like).

The $7\cdot 2^{2(v-3)}=7\cdot 2^{2n}$ lines covered by $\lgabidulin_W$
are precisely the graphs of the restrictions
$f|_Z\colon Z\to\F_{2^n}$, $x\mapsto f(x)$ of $f\in\gabidulin_W$ to lines
($2$-subspaces) $Z\subset W$. The perfect cover property
of $\lgabidulin_W$ is reflected in the fact that the maps
$\gabidulin_W\to\Hom(Z,\F_{2^n})$, $f\mapsto f|_Z$ are linear
isomorphisms,\footnote{As usual, $\Hom(X,Y)$ denotes the vector space
  of all linear maps from $X$ to $Y$. Here the ground field is $\F_2$,
  and $\Hom(Z,\F_{2^n})$ is considered as a vector space over
  $\F_2$ (although it is also a vector space over $\F_{2^n}$).}  and
hence any line disjoint from $S$, which is the graph of a unique
$\F_2$-linear map $g\colon Z\to\F_{2^n}$ for some $Z$, is covered
precisely once by $\lgabidulin_W$.

More generally, any $\F_2$-subspace $U$ of $V$ can be
parametrized in the form
\begin{equation}
  \label{eq:param}
  U=\bigl\{(x,f(x)+y);x\in Z,y\in T,f\in\Hom(Z,\F_{2^n})\bigr\},
\end{equation}
where $Z\subseteq W$, $T\subseteq\F_{2^n}$ are $\F_2$-subspaces and
$f$ is an $\F_2$-linear map. We write $U=\U(Z,T,f)$ in this case.
The spaces $Z$, $W$ are recovered from
$U$ as $Z=\bigl\{x\in W;\exists y\in\F_{2^n}\text{ such that }(x,y)\in
U\bigr\}$ and $T=\bigl\{y\in\F_{2^n};(0,y)\in U\bigr\}$. The map $f$
can be any element of $\Hom(Z,\F_{2^n})$ satisfying $\graph_f\subseteq
U$ and corresponds to a complement for $U\cap S=\{0\}\times T$
in $U$ via $f\mapsto\graph_f$.\footnote{An explicit map $f$ is obtained by
  choosing a basis $B$ of $Z$ and defining $f(b)$ as any $y$ such that
  $(b,y)\in U$.} Further,
we have $\U(Z,T,f)=\U(Z',T',f')$ if and only if $Z=Z'$, $T=T'$ and
$f-f'\in\Hom(Z,T)$ (i.e., $f(x)-f'(x)\in T$ for all $x\in Z$). The
parametrization $U=\U(Z,T,f)$ thus induces a 1-1 correspondence
between $\F_2$-subspaces of $V$ and triples
$\bigl(Z,T,f+\Hom(Z,T)\bigr)$. Finally,
the incidence relation on subspaces of $V$ translates into the
following conditions for the parameters:
$\U(Z,T,f)\subseteq\U(Z',T',f')$ iff $Z\subseteq Z'$, $T\subseteq T'$
and $f'|_Z-f\in\Hom(Z,T')$.

\section{The Basics of Expurgation-Augmentation}\label{sec:basic}

The underlying geometric idea is to find sets of planes in
$\lgabidulin_W$, whose lines can be rearranged into new planes meeting
$S$ in a point. Removing the planes in such a set from $\lgabidulin_W$
and adding in turn the new planes to the expurgated subspace code
preserves the exact cover property with
respect to lines disjoint from $S$. Moreover, if $t$ planes are
removed then $7t$ lines disjoint from $S$ are involved and, since new
planes contain only $4$ such lines, the subspace code size increases
by $\frac{7t}{4} - t = \frac{3t}{4}$.\footnote{Let us keep in mind
  that for beating the LMRD code bound we should have
  $t\geq\frac{4}{3}\gauss{n}{2}{2}=
  \frac{(2^{n+1}-1)(2^n-1)}{9}\approx\frac{2}{9}\#\gabidulin_W$.}
However, we must be careful to avoid any multiple cover of a line
meeting $S$ in a point.

Planes of $\PG(V)$ meeting $S$ in a point $P=\F_2(0,r)$ are parametrized as
$N=\U(Z,\F_2r,g)$ for some line $Z$ in $W$ and some linear map $g\colon
Z\to\F_{2^n}$. Using the natural isomorphism
$S=\{0\}\times\F_{2^n}\cong\F_{2^n}$, we may view $P$ as a point of
$\PG(\F_{2^n})$ and write the parametrization as
$N=\U(Z,P,g)$. The line $Z$ specifies the
hyperplane $H=N\vee S=\U(Z,\F_{2^n},0)$ above $S$ that contains $N$,
and the $4$ lines in $N$ disjoint from $S$ are the graphs of the
rank-distance-$1$ clique $g+\Hom(Z,P)$.
At this point it comes in handy that $f\mapsto f|_Z$ identifies
$\gabidulin_W$ with $\Hom(Z,\F_{2^n})$. The associated map on planes is
$\graph_f\mapsto\graph_{f|_Z}=\graph_f\cap H$ and can be used to
determine exactly the set of $4$ planes in $\lgabidulin_W$ that
determine the $4$ lines $\graph_h$, $h\in g+\Hom(Z,P)$.

Before stating the criterion for exact rearrangement, it will be
convenient to introduce a few special $\F_2$-subspaces of
$\gabidulin_W$. We define
\begin{align}
  \mathcal{T}&=\{ux^2+u^2x;u\in W\},\\
  \mathcal{R}&=\{ux^2+u^2x;u\in\F_{2^n}\},\\
  \mathcal{D}(Z,P)&=\{f\in\gabidulin_W;f(Z)\subseteq P\},
\end{align}
the latter with $Z$, $P$ having the same meaning as above.
Since the nonzero maps in $\mathcal{R}$ have the factorization
$ux(x+u)$, we have $\kernel(f)=\F_2u$, $\rank(f)=2$ if
$f\in\mathcal{T}$ and $\kernel(f)=\{0\}$, $\rank(f)=3$ if
$f\in\mathcal{R}\setminus\mathcal{T}$.\footnote{In the
  original version in \cite{smt:fq11proc}, which was
  designed for $v=6$, the subspace $W$ is equal to
  $\F_{2^n}=\F_8$ and $\mathcal{R}$ coincides with $\mathcal{T}$.}
Further, since $\mathcal{D}(Z,P)$ is mapped to $\Hom(Z,P)$ by $f\mapsto f|_Z$,
it is clear that $\#\mathcal{D}(Z,P)=4$. Writing $Z=\langle
a,b\rangle=\{0,a,b,a+b\}$, it is easy to verify that
$\mathcal{D}\bigl(Z,\F_2(ab^2+a^2b)\bigr)=\{0,ax^2+a^2x,bx^2+b^2x$,
$(a+b)x^2+(a+b)^2x\}$ and, using the factorized form,
\begin{equation*}
  \mathcal{D}(Z,\F_2r)=\left\{0,\frac{rax(x+a)}{ab(a+b)},
    \frac{rbx(x+b)}{ab(a+b)},\frac{r(a+b)x(x+a+b)}{ab(a+b)}\right\}
\end{equation*}
in general.

The preceding considerations imply the following
\begin{lemma}[{cf.\ \cite[Lemma~10]{smt:fq11proc}
    and \cite[Lemma~4]{mt:alb80}}]\label{lma:remove}
  Let $\mathcal{A}\subseteq\gabidulin_W$ be a subset of size $t$. The
  $7t$ lines contained in the members of
  $\{\graph_f;f\in\mathcal{A}\}\subseteq\lgabidulin_W$ can be
  rearranged into $7t/4$ new planes meeting $S$ in a point if and only if
  $t=4m$ is a multiple of $4$ and for every line $Z\subset W$ there
  exist (not necessarily distinct) points $P_1,\dots,P_m\in S$ and
  linear maps $f_1,\dots,f_m\in\mathcal{A}$ such that
  \begin{equation*}
    \mathcal{A}=\biguplus_{i=1}^m\bigl(f_i+\mathcal{D}(Z,P_i)\bigr).
  \end{equation*}
\end{lemma}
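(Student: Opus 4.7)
The plan is to translate the geometric ``line rearrangement'' condition into a purely algebraic condition on cosets in $\gabidulin_W$, one for each of the seven lines $Z \subset W$. Each plane $\graph_f \in \lgabidulin_W$ contains exactly seven lines disjoint from $S$, one per line $Z$ of $W$, namely $\graph_{f|_Z}$. A new plane $N = \U(Z, P, g)$ meeting $S$ in the single point $P = \F_2(0, r)$ contains exactly four such lines, namely $\{\graph_h; h \in g + \Hom(Z, P)\}$, and $Z$ is recovered intrinsically from $N$ as its image under the projection $V \to W$. A valid rearrangement therefore amounts, for each fixed line $Z \subset W$, to a partition of the $t$ lines $\{\graph_{f|_Z}; f \in \mathcal{A}\}$ into blocks of four lines of the form $\{\graph_h; h \in g + \Hom(Z, P)\}$.

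For the necessary direction, fix $Z \subset W$ and look at the restrictions $\{f|_Z; f \in \mathcal{A}\} \subset \Hom(Z, \F_{2^n})$. By the preceding paragraph these must partition into $m$ cosets of subspaces $\Hom(Z, P_1), \dots, \Hom(Z, P_m)$, which in particular forces $t = 4m$. Pull back via the $\F_2$-linear isomorphism $\gabidulin_W \to \Hom(Z, \F_{2^n})$, $f \mapsto f|_Z$; since this bijection identifies $\mathcal{D}(Z, P_i)$ with $\Hom(Z, P_i)$, it produces the claimed decomposition $\mathcal{A} = \biguplus_{i=1}^m (f_i + \mathcal{D}(Z, P_i))$.

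Conversely, given such a decomposition for every line $Z \subset W$, I would form the new planes $N_{Z,i} = \U(Z, P_i, f_i|_Z)$. For fixed $Z$, the four lines of $N_{Z,i}$ disjoint from $S$ correspond under the isomorphism above precisely to the coset $f_i + \mathcal{D}(Z, P_i)$, so varying $i = 1, \dots, m$ exhausts the $t$ lines $\graph_{f|_Z}$ for $f \in \mathcal{A}$ without repetition; varying $Z$ over the seven lines in $W$ then yields $7m = 7t/4$ new planes whose lines disjoint from $S$ form a disjoint cover of the $7t$ lines of the old planes. Distinct pairs $(Z, i)$ produce distinct new planes because $Z$ is recovered from the plane and the cosets $f_i + \mathcal{D}(Z, P_i)$ are pairwise disjoint.

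The argument is more bookkeeping than obstacle; the one point requiring care is the well-definedness of $\U(Z, P, g)$ modulo the equivalence $g \sim g'$ iff $g - g' \in \Hom(Z, P)$, which is exactly what the general parametrization $\U(Z, T, f) = \U(Z', T', f')$ established at the end of Section~\ref{sec:prelim} provides. Once this is in place, the proof is essentially a sevenfold bijective accounting across the lines $Z \subset W$.
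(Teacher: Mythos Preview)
Your proof is correct and follows exactly the line of argument the paper lays out in the paragraphs immediately preceding the lemma (the paper itself does not give a formal proof, deferring to \cite{smt:fq11proc,mt:alb80}, but the necessary ingredients---the identification of lines disjoint from $S$ with graphs $\graph_{f|_Z}$, the isomorphism $\gabidulin_W\to\Hom(Z,\F_{2^n})$ via $f\mapsto f|_Z$, and the description of the four lines in a new plane as a coset $g+\Hom(Z,P)$---are all spelled out there). Your write-up simply makes this bookkeeping explicit in both directions, which is precisely what is needed.
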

In other words, $\mathcal{A}$ should admit decompositions into
disjoint cosets of spaces $\mathcal{D}(Z,\cdot)$ simultaneously for
each $Z$. The points $P_1,\dots,P_m$ may coincide, in which case the
condition reduces to a representation of $\mathcal{A}$ as a union of
cosets of some space $\mathcal{D}(Z,P)$.

The criterion in Lemma~\ref{lma:remove} seems to be rather complicated to
check and a description of all such rearrangements for any given subset
$\mathcal{A}$ out of reach. However, there is an obvious candidate for
$\mathcal{A}$ that admits a simultaneous decomposition of
the required form, viz.\ the space $\mathcal{T}$, which contains the
$7$ subspaces $\mathcal{D}\bigl(Z,\F_2(ab^2+a^2b)\bigr)$ and hence
decomposes into $2$ cosets of each of them. The corresponding
rearrangement is clearly unique, and by using $\mathcal{T}$ as the
basic building block we obtain a large number of sets $\mathcal{A}$
satisfying the condition in Lemma~\ref{lma:remove} for a specific
decomposition. This will be sufficient for our purposes.

Every binomial $a_0x+a_1x^2\in\gabidulin_W$ is uniquely represented as
$r(ux^2+u^2x)$ with $r,u\in\F_{2^n}^\times$ (i.e., as $rf$ with
$r\in\F_{2^n}^\times$,
$f\in\mathcal{R}\setminus\{0\}$).\footnote{Solving
  $a_1x^2+a_0x=rux^2+ru^2x$ for $R,u$ gives $u=a_0/a_1$, $r=a_1^2/a_0$.} Hence
$\gabidulin_W$ consists of $0$, the $2(2^n-1)$ monomials $rx$,
$rx^2$, which have rank $3$, the $7(2^n-1)$
rank-2 binomials $rf$ with $f\in\mathcal{T}\setminus\{0\}$, and the
$(2^n-8)(2^n-1)$ rank-3 binomials $rf$ with
$f\in\mathcal{R}\setminus\mathcal{T}$. The subset of rank-3 binomials
decomposes into $(2^{n-3}-1)(2^n-1)$ pairwise disjoint
``rotated'' cosets $r(f+\mathcal{T})$, where $r\in\F_{2^n}^\times$ and
$f\in\mathcal{R}\setminus\mathcal{T}$ is determined modulo
$\mathcal{T}$.

Just like $\mathcal{T}$, the set $r(f+\mathcal{T})$ admits a unique
simultaneous decomposition into $2$ cosets of
$\mathcal{D}\bigl(Z,\F_2r(ab^2+a^2b)\bigr)$.  Extending this in the
obvious way to unions of rotated cosets, we see that any such union
admits a simultaneous decomposition into cosets of spaces
$\mathcal{D}\bigl(Z,\cdot)$, as required in
Lemma~\ref{lma:remove}. Hence we have the following

\begin{lemma}
  \label{lma:standard}
  Suppose $\mathcal{A}\subseteq\gabidulin_W$ is the union of some of
  the $(2^{n-3}-1)(2^n-1)$ rotated cosets $r(f+\mathcal{T})$
  ($r\in\F_{2^n}^\times$, $f\in\mathcal{R}\setminus\mathcal{T}$) and
  at most one rotated subspace $r\mathcal{T}$. Then the lines in
  $\{\graph_f;f\in\mathcal{A}\}$ can be exactly rearranged into new
  planes meeting $S$ in a point.
\end{lemma}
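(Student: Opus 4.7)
The plan is to invoke Lemma~\ref{lma:remove} by exhibiting, for each line $Z\subset W$, an explicit disjoint decomposition of $\mathcal{A}$ into cosets of spaces $\mathcal{D}(Z,P)$. Since $\mathcal{A}$ is a disjoint union of rotated cosets $r(f+\mathcal{T})$ (plus at most one rotated subspace $r\mathcal{T}$, which is the special case $f=0$), and since different such rotated cosets are pairwise disjoint by the counting given just before the statement, it suffices to produce the decomposition for a single rotated coset and then take the union.

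First I would verify that $\mathcal{T}$ itself is a $3$-dimensional $\F_2$-subspace of $\gabidulin_W$ (the map $u\mapsto ux^2+u^2x$ is $\F_2$-linear in $u$). Next, for a fixed line $Z=\langle a,b\rangle\subseteq W$, the excerpt already supplies the explicit identification
\begin{equation*}
  \mathcal{D}\bigl(Z,\F_2(ab^2+a^2b)\bigr)
  =\bigl\{0,\,ax^2+a^2x,\,bx^2+b^2x,\,(a+b)x^2+(a+b)^2x\bigr\}\subseteq\mathcal{T},
\end{equation*}
which is a $2$-dimensional subspace of $\mathcal{T}$. Hence $\mathcal{T}$ decomposes uniquely into $2^3/2^2=2$ cosets of $\mathcal{D}(Z,\F_2(ab^2+a^2b))$.

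Next I would transport this decomposition through the scaling and translation. Scaling by $r\in\F_{2^n}^\times$ is $\F_2$-linear on $\gabidulin_W$ and satisfies $r\cdot\mathcal{D}(Z,P)=\mathcal{D}(Z,rP)$ (since $f(Z)\subseteq P\Leftrightarrow(rf)(Z)\subseteq rP$). Therefore $r\mathcal{T}$ decomposes into $2$ cosets of $\mathcal{D}\bigl(Z,\F_2 r(ab^2+a^2b)\bigr)$, and translating by $rf$ shows that the rotated coset $r(f+\mathcal{T})=rf+r\mathcal{T}$ decomposes into $2$ cosets of the same subspace $\mathcal{D}\bigl(Z,\F_2 r(ab^2+a^2b)\bigr)$, now of the form $rf+\bigl(\text{coset of }r\mathcal{T}\bigr)$. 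This gives $m=2$ cosets per rotated piece.

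Finally I would assemble these local decompositions. For each line $Z$ and each chosen rotated piece $r_i(f_i+\mathcal{T})$ in $\mathcal{A}$ (with $f_i\in\mathcal{R}\setminus\mathcal{T}$ or $f_i=0$), write it as the disjoint union of $2$ cosets of $\mathcal{D}\bigl(Z,\F_2 r_i(ab^2+a^2b)\bigr)$; the resulting union over all chosen pieces is automatically disjoint because the rotated pieces themselves are pairwise disjoint in $\gabidulin_W$. This exhibits $\mathcal{A}$ in the form required by Lemma~\ref{lma:remove} (with $m=|\mathcal{A}|/4$ cosets, $t=|\mathcal{A}|=4m$ automatically a multiple of $4$), and the claimed exact rearrangement follows. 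The only genuinely content-bearing step is the explicit identification of the subspace $\mathcal{D}(Z,\F_2(ab^2+a^2b))$ as sitting inside $\mathcal{T}$ with codimension $1$; once this is in hand, everything else is routine linear bookkeeping via scaling and translation.
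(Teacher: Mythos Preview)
Your proposal is correct and follows essentially the same approach as the paper: both arguments observe that each rotated coset $r(f+\mathcal{T})$ decomposes, for every line $Z=\langle a,b\rangle\subset W$, into two cosets of $\mathcal{D}\bigl(Z,\F_2 r(ab^2+a^2b)\bigr)$, and then invoke Lemma~\ref{lma:remove} on the disjoint union. You have simply spelled out in more detail the scaling identity $r\cdot\mathcal{D}(Z,P)=\mathcal{D}(Z,rP)$ and the translation step that the paper leaves implicit.
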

The corresponding ``obvious'' exact rearrangement of free lines into
new planes will be called the \emph{standard
  rearrangement}.\footnote{``Free line'' refers to a line covered by
  $\{\graph_f;f\in\mathcal{A}\}$. After removal of this set of planes
  from $\gabidulin_W$, such a line is uncovered, i.e.,
  ``free''. This name was coined in \cite{smt:fq11proc}.}

The previous construction provides us with myriads of subsets
$\mathcal{A}\subseteq\gabidulin_W$ satisfying the conditions
in Lemma~\ref{lma:remove}, but it does not tell us
whether the set $\mathcal{N}$ of
new planes of the corresponding standard rearrangement has
$\sdist(\mathcal{N})\geq 4$. We are interested in the
largest subsets $\mathcal{A}$ having this extra property, since
the size $\#\mathcal{C}=4^{v-3}+3t/4$ of the modified subspace code
\begin{equation*}
  \mathcal{C}=\lgabidulin_W-\{\graph_f;f\in\mathcal{A}\}\cup\mathcal{N},
\end{equation*}
which then has $\sdist(\mathcal{C})\geq 4$ as well,
is an increasing function of $t=\#\mathcal{A}$.

\begin{RP}
  Determine the subsets $\mathcal{A}\subseteq\gabidulin_W$ of maximum
  size that are unions of pairwise disjoint rotated cosets
  $r(f+\mathcal{T})$ (as in Lemma~\ref{lma:standard}) and whose
  standard rearrangement into new planes forms a subspace code
  $\mathcal{N}$ with $\sdist(\mathcal{N})\geq 4$.
\end{RP}
We are not able to solve the rearrangement problem, but we will
exhibit fairly large subsets $\mathcal{A}$ with this property (cf.\
Theorem~\ref{thm:remove} below). The resulting modified subspace
codes, however, are still inferior to those produced by the echelon-Ferrers
construction (although it is conceivable that they can be extended by
$\approx\gauss{v-3}{2}{2}$ further planes meeting $S$ in a line to a
code exceeding the LMRD code bound). That notwithstanding, the
preparations made en-route to Theorem~\ref{thm:remove} will be needed
for the refined approach taken up in Section~\ref{sec:refined}.

Before proceeding, it will be convenient to discuss some properties of
the map $\dickson\colon\F_{2^n}\times\F_{2^n}\to\F_{2^n}$,
$(x,y)\mapsto xy^2+x^2y=xy(x+y)$. The map $\dickson$ is
$\F_2$-bilinear, symmetric, and alternating (i.e., $\dickson(x,x)=0$ for
$x\in\F_{2^n}$). Fixing the second argument, say, we have that
$x\mapsto\dickson(x,y)$, $y\neq 0$, is $\F_2$-linear with kernel
$\F_2y$ and hence induces a collineation from the quotient geometry
$\PG(\F_{2^n})/P$, $P=\F_2y$, onto some hyperplane $H$ in
$\PG(\F_{2^n})$.\footnote{The points and lines of
  $\PG(\F_{2^n})/P\cong\PG(\F_{2^n}/\F_2y)$ are the lines and planes
  of $\PG(\F_{2^n})$ through $P$, respectively, and the incidence
  relation is the induced one.} Using
$xy^2+x^2y=y^3\bigl(x/y+(x/y)^2\bigr)$ and Hilbert's Satz~90, this
hyperplane is easily seen to have equation $\trace(x/y^3)=0$. The
factorized form $\dickson(x,y)=xy(x+y)$ reveals that $\dickson(x,y)$
is equal to the product of the three (nonzero) points on the line
$L=\langle x,y\rangle=\{0,x,y,x+y\}$ and thus provides a second
geometric interpretation of $\dickson(x,y)$. In particular, by setting
$\dickson(L)=xy(x+y)$ we obtain a map from lines to points of
$\PG(\F_{2^n})$. The following property of this map turns out to be
crucial for the subsequent development; cf.\ Section~\ref{sec:allthat}.

\begin{lemma}
  \label{lma:dickson}
  $L\mapsto\dickson(L)$ maps the lines contained in any plane $E$ of
  $\PG(\F_{2^n})$ bijectively onto the points of another plane
  $E'$. Moreover, the induced map from $\PG(E)$ to $\PG(E')$ is a
  correlation (i.e., an incidence reversing bijection mapping lines to
  points and points to lines).
\end{lemma}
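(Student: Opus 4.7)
The plan is to reduce everything to the kernel observation made in the paragraph immediately preceding the lemma: for every nonzero $y \in \F_{2^n}$, the map $\phi_y \colon x \mapsto \dickson(x,y)$ is $\F_2$-linear with kernel $\F_2 y$. From this I extract the key \emph{local} statement. Fix a point $P = \F_2 p$ of $E$ and let $q_1, q_2, q_3$ be representatives of the three nonzero cosets of $\F_2 p$ in $E$; then $q_1 + q_2 + q_3 \in \F_2 p$, the three lines of $E$ through $P$ are $\langle p,q_i\rangle$, and
\begin{equation*}
  \dickson\bigl(\langle p,q_i\rangle\bigr) = \phi_p(q_i)
\end{equation*}
are precisely the three nonzero elements of the $2$-dimensional image $M_P := \phi_p(E) \subseteq \F_{2^n}$. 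In particular, these three $\dickson$-values are distinct, nonzero, and sum to $0$.

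Second, I would establish pairwise distinctness of the seven values $\dickson(L)$ as $L$ ranges over the lines of $E$. If $\dickson(L_1) = \dickson(L_2)$ with $L_1 \neq L_2$, then $L_1 \cap L_2$ is a single point $P$ of $E$; the third line $L_3$ of $E$ through $P$ would then satisfy $\dickson(L_3) = \dickson(L_1) + \dickson(L_2) = 0$ by the local step, contradicting the fact that $\dickson(L_3) = xy(x+y) \neq 0$ for any basis $\{x,y\}$ of $L_3$.

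Third, I would set $E' := \{0\} \cup \{\dickson(L); L \text{ a line of } E\}$ and show it is a $3$-dimensional $\F_2$-subspace (i.e., a plane of $\PG(\F_{2^n})$). The previous step gives $\lvert E'\rvert = 8$; for closure under addition, take two distinct lines $L_1, L_2$ of $E$, let $P = L_1 \cap L_2$, and let $L_3$ be the third line of $E$ through $P$: the local step yields $\dickson(L_1) + \dickson(L_2) = \dickson(L_3) \in E'$. Since every pair of distinct points of $E'$ is of this form, $E'$ is closed under addition and hence a plane. The map $L \mapsto \dickson(L)$ is therefore a bijection from the seven lines of $\PG(E)$ onto the seven points of $\PG(E')$, and the local step says that the three lines of $E$ through an arbitrary point $P \in \PG(E)$ are sent to the three points of the line $M_P \subset E'$. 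Dualizing, the assignment $P \mapsto M_P$ extends $L \mapsto \dickson(L)$ to an incidence-reversing bijection $\PG(E) \to \PG(E')$, i.e., a correlation.

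The real work lies in the first step, which is essentially handed to us by the kernel description of $\phi_y$ recalled just before the lemma; once that is identified, the remainder is straightforward bookkeeping inside the Fano plane, and the only potentially subtle point is the distinctness argument, which is disposed of by the observation that $\dickson(L) \neq 0$ for every line $L$.
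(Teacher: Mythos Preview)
Your proof is correct and follows essentially the same approach as the paper: both exploit the bilinearity and alternating property of $\dickson(\cdot,\cdot)$ together with the fact that $\phi_p$ has kernel $\F_2 p$, and both finish the correlation claim via the observation that the three lines through a point map to a line. The only difference is packaging---the paper fixes a basis $\{a,b,c\}$ and writes out all seven values $\dickson(L_i)$ explicitly, whereas you extract the same information from the local ``pencil through $P$'' picture without enumerating; this makes your distinctness and closure arguments a bit cleaner, but the mathematical content is the same.
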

\begin{proof}
  Writing $E=\langle a,b,c\rangle$, we must show that
  $E'=\langle ab^2+a^2b,ac^2+a^2c,bc^2+b^2c\rangle
  =\bigl\langle\dickson(a,b),\dickson(a,c),\dickson(b,c)\bigr\rangle$
  has the required property. The lines of $E$ are
  $L_1=\overline{a,b}$, $L_2=\overline{a,c}$, $L_3=\overline{b,c}$,
  $L_4=\overline{a,b+c}$, $L_5=\overline{b,a+c}$,
  $L_6=\overline{c,a+b}$, and $L_7=\overline{a+b,a+c}$. Using
  the stated properties of $(x,y)\mapsto\dickson(x,y)$,
  we obtain $\dickson(L_4)=\dickson(a,b)+\dickson(a,c)$,
   $\dickson(L_5)=\dickson(a,b)+\dickson(b,c)$,
   $\dickson(L_6)=\dickson(a,c)+\dickson(b,c)$, $\dickson(L_7)
   =\dickson(a,b)+\dickson(a,c)+\dickson(b,c)$. Together with
   $\dickson(L_i)\neq 0$ for $1\leq i\leq 7$ this shows that $E'$ is
   indeed a plane (i.e., $\dickson(a,b)$, $\dickson(a,c)$,
   $\dickson(b,c)$ are linearly independent) and contains precisely
   the points $\dickson(L_i)$, $1\leq i\leq 7$, as claimed. Finally,
   $L\mapsto\dickson(L)$ maps the three lines in $E$ through a fixed
   point $P$ onto some line in $E'$ (since it induces a collineation
   $\PG(\F_{2^n})/P\to H$), proving the last assertion.
\end{proof}
\begin{remark}
  \label{rmk:dickson}
  For any line $L=\langle a,b\rangle$ in $\PG(\F_{2^n})$ we may form
  the \emph{line polynomial} $\spol_L(X)=\prod_{u\in
    L}(X-u)=X(X+a)(X+b)(X+a+b)=(X^2+aX)\bigl(X^2+aX+b(a+b)\bigr)
  =X^4+(a^2+ab+b^2)X^2+ab(a+b)X\in\F_{2^n}[X]$. The coefficients of
  $\spol_L(X)$, viewed as polynomials in $\F_2[a,b]$, are
  $\GL(2,\F_2)$-invariants and freely generate the invariant ring
  $R=\F_2[a,b]^{\GL(2,\F_2)}$ in the sense that
  $R=\F_2[a^2+ab+b^2,ab(a+b)]$ is a polynomial ring. An analogous
  result holds for arbitrary subspaces (in place of lines) and prime
  powers $q>1$ (in place of $2$). This $q$-analogue of the fundamental
  theorem for symmetric polynomials is due to Dickson
  \cite{dickson11}, and the coefficients $\delta_i^{(k)}$ of the
  generic $k$-dimensional subspace
  polynomial
  $\spol_U(X)=X^{q^k}-\dickson_1^{(k)}X^{q^{k-1}}\pm\dots+(-1)^k\dickson_k^{(k)}X$
  are accordingly referred to as $q$-ary, $k$-dimensional \emph{Dickson
    invariants}. Thus $\dickson(a,b)=ab^2+a^2b=ab(a+b)$ is equal to
  the second binary $2$-dimensional Dickson invariant (``line
  invariant'') $\dickson_2^{(2)}$.
\end{remark}

Now we resume our analysis of the rearrangement problem. Since
$f\in\mathcal{R}$ has the form $f(x)=ux^2+u^2x=\dickson(u,x)$ and
$f\in\mathcal{T}$ iff $u\in W$, we can write $r(f+\mathcal{T})$ as
$r\bigl(\dickson(u,x)+\dickson(W,x)\bigr)=r\dickson(u+W,x)$.  The $14$
new planes obtained from $r\dickson(u+W,x)$ by the standard
rearrangement are $\U\bigl(Z,r\dickson(Z),r\dickson(u,x)\bigr)$ and
$\U\bigl(Z,r\dickson(Z),r\dickson(u+c,x)\bigr)$ with $Z$ varying
over the $7$ lines in $W$ and $c\in W\setminus Z$ (thus $c$ depends
on $Z$). Two distinct new planes (not necessarily from the same
rotated coset) have a line in common if and only if they pass through
the same point in $S$ and have another point outside $S$ in
common. The $12$ points outside $S$ covered by
$\U\bigl(Z,r\dickson(Z),r\dickson(u,x)\bigr)$ and
$\U\bigl(Z,r\dickson(Z),r\dickson(u+c,x)\bigr)$, respectively, are
\begin{equation}
  \label{eq:cpoints}
  \begin{aligned}
    &\bigl(a,r\dickson(u,a)\bigr)&&\bigl(a,r\dickson(u+c,a)\bigr)\\
    &\bigl(a,r\dickson(u+b,a)\bigr)&&\bigl(a,r\dickson(u+c+b,a)\bigr)\\
    &\bigl(b,r\dickson(u,b)\bigr)&&\bigl(b,r\dickson(u+c,b)\bigr)\\
    &\bigl(b,r\dickson(u+a,b)\bigr)&&\bigl(b,r\dickson(u+c+a,b)\bigr)\\
    &\bigl(a+b,r\dickson(u,a+b)\bigr)&&\bigl(a+b,r\dickson(u+c,a+b)\bigr)\\
    &\bigl(a+b,r\dickson(u+a,a+b)\bigr)&&\bigl(a+b,r\dickson(u+c+a,a+b)\bigr)
  \end{aligned}
\end{equation}
(listed column-wise and writing $Z=\langle a,b\rangle$ as before).
\begin{theorem}
  \label{thm:remove}
  \begin{enumerate}[(i)]
  \item\label{thm:remove:1}
    The standard rearrangement of the $2^n$, $n=v-3$, planes in
    $\{\graph_f;f\in\mathcal{R}\}$ forms a set $\mathcal{N}$ of
    new planes satisfying $\sdist(\mathcal{N})\geq 4$.
  \item\label{thm:remove:2} If $r_1,\dots,r_s\in\F_{2^n}^\times$ are
    such that $r_iW'\cap r_jW'=\emptyset$ for $1\leq i<j\leq s$, then
    the standard rearrangement of the $2^n+(s-1)(2^n-8)$ planes
    $\graph_f$,
    $f\in r_1\mathcal{R}\uplus r_2(\mathcal{R}\setminus\mathcal{T})
    \uplus\dots\uplus r_s(\mathcal{R}\setminus\mathcal{T})$
    satisfies $\sdist(\mathcal{N})\geq 4$.
  \item\label{thm:remove:3} If $v\equiv0\pmod{3}$ and $W$ is chosen as
    the subfield $\F_8\subset\F_{2^n}$, then the standard
    rearrangement of the $2^n+(2^n-8)^2/7$ planes $\graph_f$,
    $f\in\mathcal{R}\uplus r(\mathcal{R}\setminus\mathcal{T})
    \uplus\dots\uplus
    r^{(2^n-1)/7-1}(\mathcal{R}\setminus\mathcal{T})$
    satisfies $\sdist(\mathcal{N})\geq 4$ and yields a modified
    subspace code $\mathcal{C}$ with parameters
    $\bigl(v,4^{v-3}+\frac{3}{7}(4^{v-4}-9\cdot
    2^{v-5}+16),4;3\bigr)_2$.
  \end{enumerate}
\end{theorem}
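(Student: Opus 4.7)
My plan is to prove all three parts at once, using a common reduction and then specializing. The starting point is to identify when two distinct new planes can cover a common line: since each new plane meets $S$ in exactly one point $P$, and the incidence geometry of a projective plane forces any shared line to pass through $P$ (two distinct planes through $P$ sharing a further point $Q\neq P$ must share the line $P\vee Q$), two new planes share a line iff they share their $S$-point \emph{and} at least two non-$S$ points. Consequently the minimum-distance verification reduces to scanning the list in \eqref{eq:cpoints} for two-point overlaps between new planes with coincident $S$-point, which by construction has the form $\F_2 r\dickson(Z)$ for some $r$ in the rotation set and line $Z\subset W$.

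For part (i) I take $r=1$ throughout. By Lemma~\ref{lma:dickson} applied to $W$, the map $Z\mapsto\dickson(Z)$ is a bijection from the seven lines of $W$ onto the seven points of $W':=\dickson(W)$, so distinct lines $Z_1\neq Z_2$ in $W$ yield distinct $S$-points. Hence the only potential $S$-point collisions occur within a fixed $Z=\langle a,b\rangle$, between new planes coming from different cosets of $\mathcal{D}(Z,\F_2\dickson(Z))$ in $\mathcal{R}$, i.e., with coset representatives $\dickson(u_1,\cdot), \dickson(u_2,\cdot)$ such that $u:=u_1+u_2\notin Z$. The hard step is then to show, for every $x\in Z\setminus\{0\}$, that $\dickson(u,x)\notin\F_2\dickson(Z)=\{0,\dickson(a,b)\}$. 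I apply Lemma~\ref{lma:dickson} a second time, now to the $3$-dimensional plane $E=\langle u,a,b\rangle$ of $\PG(\F_{2^n})$ (genuinely $3$-dimensional because $u\notin Z$): this gives $\dickson(\langle u,x\rangle)\neq\dickson(\langle a,b\rangle)$, while $\dickson(u,x)\neq 0$ follows from $u\notin\{0,x\}$. Referring to \eqref{eq:cpoints}, this rules out any pairing between a non-$S$ point of the first plane and one of the second, so $\sdist(\mathcal{N})\geq 4$.

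For part (ii) I first handle each $r_i$-block internally by rescaling: the within-block analysis is identical to part (i) after the uniform substitution $\dickson(u,x)\mapsto r_i\dickson(u,x)$, and only the $r_1$-block contains the rotated subspace $r_1\mathcal{T}$, as required by the "at most one rotated subspace" clause of Lemma~\ref{lma:standard}. Across different blocks the $S$-points lie in $r_iW'$ and $r_jW'$ respectively, so the hypothesis $r_iW'\cap r_jW'=\emptyset$ forbids any $S$-point coincidence and hence any shared line. Counting is immediate: $|r_1\mathcal{R}|=2^n$ and $|r_i(\mathcal{R}\setminus\mathcal{T})|=2^n-8$ for $i\geq 2$, yielding $t=2^n+(s-1)(2^n-8)$ removed planes.

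For part (iii) I instantiate (ii) with $W=\F_8\subset\F_{2^n}$, which is available precisely when $3\mid n$, i.e., $v\equiv 0\pmod{3}$. By closure of $\F_8$ one has $W'=\dickson(\F_8)\subseteq\F_8$, and since Lemma~\ref{lma:dickson} forces $\dim W'=3$, actually $W'=W=\F_8$. The disjointness condition $r_iW'\cap r_jW'=\emptyset$ thus becomes $r_i^{-1}r_j\notin\F_8^\times$, which is satisfied precisely when the cosets $r_i\F_8^\times\in\F_{2^n}^\times/\F_8^\times$ are pairwise distinct; choosing $r$ a primitive element of $\F_{2^n}$ and $r_i=r^{i-1}$ for $1\leq i\leq s:=(2^n-1)/7$ exhausts the quotient group exactly. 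Plugging $s=(2^n-1)/7$ into the bound from (ii) gives $t=2^n+(2^n-8)^2/7$ and code size $|\mathcal{C}|=2^{2n}+3t/4$; routine algebra rewrites this as $4^{v-3}+\frac{3}{7}\bigl(4^{v-4}-9\cdot 2^{v-5}+16\bigr)$, as claimed.
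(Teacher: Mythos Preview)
Your proof is correct and follows essentially the same route as the paper's. The only notable difference is in the key step of part~(i): where the paper fixes a point $a\in Z$ and uses directly that $v\mapsto\dickson(v,a)$ has kernel $\F_2a$ (so the $2^{n-1}$ second coordinates are distinct as $v$ runs over $\F_{2^n}/\F_2a$), you instead argue pairwise by applying Lemma~\ref{lma:dickson} to the auxiliary plane $E=\langle u,a,b\rangle$. These are equivalent, since Lemma~\ref{lma:dickson} is itself proved from that kernel fact. One small expository point: your opening reduction (``two new planes share a line iff they share their $S$-point and~\dots'') implicitly uses that no two new planes can share a line disjoint from $S$; this follows from the exact-cover property of the standard rearrangement (Lemma~\ref{lma:standard}), which you invoke later but should mention here as well.
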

\begin{proof}
  \eqref{thm:remove:1} Consider the $4$ points in \eqref{eq:cpoints} with first
  coordinate $a$. If $u$ varies over a set of coset representatives for
  $\F_{2^n}/W$, then the second coordinate of the $4$
  points takes precisely the values $\dickson(x,a)$ with $x$ varying
  over a set of coset representatives for $\F_{2^n}/\F_2a$. Since
  $\dickson$ is one-to-one on $\F_{2^n}/\F_2a$, these $2^{n-1}$
  values, and hence also the $2^{n-1}$ points $\bigl(a,\dickson(x,a)\bigr)$,
  are distinct. This reasoning applies to the points with first
  coordinate $b$ or $a+b$ as well and shows that the $2^{n-2}$ planes
  in $\mathcal{N}$ with the same $Z$ pairwise intersect only in
  the point $\bigl(0,\dickson(Z)\bigr)\in S$. But for $Z\neq Z'$ we
  have $\bigl(0,\dickson(Z)\bigr)\neq\bigl(0,\dickson(Z')\bigr)$ by
  Lemma~\ref{lma:dickson}, and hence planes
  in $\mathcal{N}$ with different $Z$ have subspace distance $\geq 4$
  as well. This completes the proof of \eqref{thm:remove:1}.

  \eqref{thm:remove:2} The new planes in the standard rearrangement of
  $\{\graph_f;f\in r_i\mathcal{R}\}$ intersect $S$ in the points of
  the plane $\{0\}\times r_iW'$. For $i\neq j$, since $r_iW'\cap
  r_jW'=\emptyset$ by assumption, new planes obtained from
  $r_i\mathcal{R}$ and $r_j\mathcal{R}$ cannot intersect in $S$ and
  hence have subspace distance $\geq 4$. Together with
  \eqref{thm:remove:1} this proves \eqref{thm:remove:2}.

  \eqref{thm:remove:3} For $W=\F_8$ we have $W'=W$, and the planes
  $W$, $rW$, \ldots, $r^{(2^n-1)/7-1}W$ are pairwise
  disjoint.\footnote{In fact they form the standard example of a plane
    spread in $\PG(\F_{2^n})$.} Hence
  the first assertion in \eqref{thm:remove:3} follows from
  \eqref{thm:remove:2}. Finally, the number of planes in $\mathcal{C}$ is
  $4^n+\frac{3}{4}\bigl(2^n+\frac{1}{7}(2^n-8)^2\bigr)=4^n+\frac{3}{4\cdot
  7}(4^n-9\cdot 2^n+64)=4^n+\frac{3}{7}(4^{n-1}-9\cdot 2^{n-2}+16)$, as claimed.
\end{proof}
\begin{remark}
  \label{rmk:remove}
  The following more geometric view of
  Theorem~\ref{thm:remove}\eqref{thm:remove:1}, which yields an
  alternative proof, may be of interest.

  The ``removed''
  set of planes $\{\graph_f;f\in\mathcal{R}\}$ covers precisely half of
  the points of $\PG(V)$ outside $S$ and forms an exact $2$-cover of
  this set of points. This follows from the fact that $ux^2+u^2x=y$,
  viewed as an equation for $u\in\F_{2^n}$ with parameter $x\neq 0$,
  has $2$ solutions if $\trace(x^{-3}y)=0$ and no solution if
  $\trace(x^{-3}y)=1$. In other words, in each ($n+1$)-dimensional
  space $F\supset S$, whose affine part is of the form $F\setminus S=\{x\}\times
  S$ for some nonzero $x\in W$, exactly the points $\F_2(x,x^3v)$ with
  $\trace(v)=0$ are covered. There are $2^{n-1}$ such points, forming
  the affine part of an $n$-subspace of $F$ intersecting
  $S$ in $\{x^3v;\trace(v)=0\}$.\footnote{Here we use again the
    identification $S=\{0\}\times\F_{2^n}\cong\F_{2^n}$.}

  Since $\rank(f_1-f_2)=2$ iff $f_1-f_2\in\mathcal{T}$, this $2$-cover
  is made up from smaller pieces $\{\graph_g;g\in f+\mathcal{T}\}$
  corresponding to the cosets in $\mathcal{R}/\mathcal{T}$. The $8$
  planes in such a set mutually intersect in a point and hence
  $2$-cover a set of $\binom{8}{2}=28$ points ($4$ points in each
  $F$).

  A point $Q$ covered by $\{\graph_f;f\in\mathcal{R}\}$ is the
  intersection point of unique planes $E_1$ and $E_2$ in
  $\{\graph_f;f\in\mathcal{R}\}$. Each of $E_1,E_2$ contains $3$ lines
  through $Q$, which represent the $3$ hyperplanes above $F=Q\vee
  S$. Hence these lines are matched into $3$ pairs;
  the lines in a pair determine the same hyperplane and generate a new
  plane in the standard rearrangement. It follows that the set
  $\mathcal{N}$ of new planes of the standard rearrangement of
  $\{\graph_f;f\in\mathcal{R}\}$ forms a $3$-cover of the same set of
  points and that new planes $N_1$, $N_2$ meeting in $S$ do not meet
  outside $S$, since $N_1\cap S=N_2\cap S$ implies $N_1\vee S=N_2\vee
  S$, which in turn follows from the injectivity $Z=\langle a,b\rangle\mapsto
  P=\F_2(0,ab^2+a^2b)$. This gives again Part~\eqref{thm:remove:1} of
  Theorem~\ref{thm:remove}.
\end{remark}
\begin{example}[$v=6$]
  \label{ex:v=6}
  This is the smallest case, where Theorem~\ref{thm:remove} applies.
  Here $W=\F_8$, $\mathcal{R}=\mathcal{T}$, and
  Part~\eqref{thm:remove:1} yields a subspace code
  $\mathcal{C}=\lgabidulin\setminus\{\graph_f;f\in\mathcal{T}\}\cup\mathcal{N}$
  of size $\#\mathcal{C}=70$ consisting of the planes
  \begin{align*}
    G(a_0,a_1)&=\bigl\{(x,a_0x+a_1x^2);x\in\F_8\bigr\},\quad
                a_0,a_1\in\F_8,\;a_0\neq a_1^2;\\ 
    N(Z,c)&=\bigl\{(x,cx^2+c^2x+y\dickson(Z));y\in Z,\eta\in\F_2\bigr\},
            \quad Z\subset\F_8\text{ a line},\;c\in\F_8/Z.
  \end{align*}
  This provides the essential step in the construction of an optimal
  $(6,77,4;3)_2$ code of Type~A in \cite{smt:fq11proc}. 

  The construction is completed in the following way: The $28$ points
  $\F_2(x,y)$ covered by the new planes $N(Z,c)$ outside
  $S=\{0\}\times\F_8$ are those covered by $G(a^2,a)$, $a\in\F_8$, and
  satisfy $\trace(x^4y)=0$, as is clear from
  $x^4(ax^2+a^2x)=x^{-3}(ax^2+a^2x)=a/x+(a^2/x^2)$.\footnote{The
    hyperbolic quadric $\mathcal{H}$ in $\PG(V)\cong\PG(5,\F_2)$ with
    equation $\trace(x^4y)=0$ consists of these $28$ points and the
    $7$ points in $S$. Together with $S$ the $14$ planes $N(Z,c)$ form
    one of the two sets of generators of $\mathcal{H}$. This provides
    the link to the alternative construction of a $(6,77,4;3)_2$ code
    of Type~A in \cite{cossidente-pavese16}.}  Now it is possible to
  connect the $7$ lines in $S$ to $7$ points outside $S$ in such a way
  that the resulting planes cover precisely the $28$ points
  $\F_2(x,y)$ outside $S$ satisfying $\trace(x^4y)=1$. For this simply
  connect the point $\F_2(x,x^3)$, which has
  $\trace(x^4x^3)=\trace(1)=1$, to the line
  $\bigl\{y\in\F_8;\trace(x^4y)=0\bigr\}$. The resulting $7$ planes
  can be added to $\mathcal{C}$ to form the desired $(6,77,4;3)_2$
  code.
\end{example}
Theorem~\ref{thm:remove}\eqref{thm:remove:3} is still too weak to
produce codes meeting (let alone exceeding) the LMRD code bound.  More
generally, any choice of $\mathcal{A}$ that avoids new planes with
different $Z$ meeting in a point of $S$ is subject to the bound
$\#\mathcal{A}\leq\frac{1}{7}\#\gabidulin_W=\frac{1}{7}4^{v-3}$ and
yields a net gain relative to $\#\gabidulin_W=4^{v-3}$ of at most
$\frac{3}{7}4^{v-4}<\gauss{v-3}{2}{2}\approx\frac{2}{3}4^{v-4}$
planes.
This remains true even if we relax the condition of exact
rearrangement and augment the expurgated Gabidulin code by the maximum
number of planes meeting $S$ in a point while maintaining subspace
distance $\geq 4$.



\section{The Refined Approach}\label{sec:refined}

In this section we relax the condition of exact rearrangement but
restrict attention to rotation-invariant subsets
$\mathcal{A}\subseteq\gabidulin_W$. In \cite{lt:0328} this was
empirically found as the best approach in the smallest applicable
case $v=7$, and the subsequent algebraic analysis in \cite{mt:alb80}
has largely explained this phenomenon.

The smallest rotation-invariant subsets of $\gabidulin_W$ admitting a
standard rearrangement have size $8(2^n-1)$ and consist of the rotated copies
$r(f+\mathcal{T})$, $r\in\F_{2^n}^\times$, of a single coset
$f+\mathcal{T}$ with $f\in\mathcal{R}\setminus\mathcal{T}$. Since
there are $2^{n-3}-1$ such cosets, the total
number of choices for $\mathcal{A}$, being equal to $2^{2^{n-3}-1}-1$
($\mathcal{A}=\emptyset$ is omitted), is still growing extremely fast
with $n$.

Relaxing the condition of exact rearrangement, we now ask for
maximum-size subsets
$\mathcal{N}'\subseteq\mathcal{N}$
of the standard rearrangement of
$\{\graph_f;f\in\mathcal{A}\}$ into new planes subject to
$\sdist(\mathcal{N}')\geq 4$. Rotation-invariance of $\mathcal{A}$
reduces this, in general, ``global'' problem to a ``local'' problem at
one particular point of $S$, say $P_1=\F_2(0,1)$. For the statement of
the result note that the group $\Sigma$ is isomorphic to
$\F_{2^n}^\times$ via $\bigl((x,y)\mapsto(x,ry)\bigr)\mapsto r$.
\begin{lemma}
  \label{lma:local}
  Let $\mathcal{A}\subseteq\gabidulin_W$ be rotation-invariant and
  such that $\mathcal{A}\cap\mathcal{R}$ forms a union of nontrivial
  cosets of $\mathcal{T}$, $\mathcal{N}$ the corresponding standard
  rearrangement into new planes, and
  $\mathcal{N}_r\subseteq\mathcal{N}$ the set of new planes passing
  through $P_r=\F_2(0,r)\in S$.
  \begin{enumerate}[(i)]
  \item\label{lma:local:1}
    If $\mathcal{N}_1'\subseteq\mathcal{N}_1$ has maximum size $M_1$
    subject to the distance condition $\sdist(\mathcal{N}_1')\geq 4$
    then
    $\mathcal{N}'=\bigcup_{g\in\Sigma}g(\mathcal{N}_1')\subseteq\mathcal{N}$
    has maximum size $M_1(2^n-1)$ subject to the distance condition
    $\sdist(\mathcal{N}')\geq 4$.
  \item\label{lma:local:2} The totality of subsets
    $\mathcal{N}'\subseteq\mathcal{N}$ of maximum size $M_1(2^n-1)$ satisfying
    $\sdist(\mathcal{N}')\geq 4$ is obtained by choosing,
    independently for each $g\in\Sigma$, subsets
    $\mathcal{N}_g\subseteq\mathcal{N}_1$ of size $M_1$ with
    $\sdist(\mathcal{N}_g)\geq 4$ and taking the union
    $\mathcal{N}'=\bigcup_{g\in\Sigma}g(\mathcal{N}_g)$.
  \end{enumerate}
Therefore, if the number of sets $\mathcal{N}'_1$ in \eqref{lma:local:1} is $t$,
the total number of choices for $\mathcal{N}'$ in \eqref{lma:local:2}
is equal to $t^{2^n-1}$.
\end{lemma}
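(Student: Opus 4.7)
The plan is to reduce the global distance condition $\sdist(\mathcal{N}')\geq 4$ to $2^n-1$ independent ``local'' conditions, one on each $\mathcal{N}_r$, via the observation that two new planes passing through distinct points of $S$ must automatically have subspace distance $\geq 4$. Once this decoupling is in place, the statement becomes essentially an orbit-counting exercise.

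First I would prove the key geometric fact: if $N_1,N_2\in\mathcal{N}$ satisfy $N_1\cap S\neq N_2\cap S$, then $\sdist(N_1,N_2)\geq 4$. The argument is that any common line $L\subseteq N_1\cap N_2$ would have to intersect $S$ in a subspace contained in both $N_1\cap S$ and $N_2\cap S$; since these are distinct points, this forces $L\cap S=\{0\}$, so $L$ is a free line covered twice by $\mathcal{N}$, contradicting the exact cover property built into the standard rearrangement of Lemma~\ref{lma:remove}.

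Next I would invoke the $\Sigma$-action on $V=W\times\F_{2^n}$, $(x,y)\mapsto(x,ry)$, which is distance-preserving (being $\F_2$-linear on $V$), fixes $S$ set-wise, and permutes $\mathcal{N}$ (using rotation-invariance of $\mathcal{A}$ and the canonicity of the standard rearrangement associated to Lemma~\ref{lma:standard}). Since $(x,y)\mapsto(x,ry)$ sends $P_1$ to $P_r$, it restricts to a bijection $\mathcal{N}_1\to\mathcal{N}_r$, giving the $\Sigma$-equivariant partition $\mathcal{N}=\biguplus_{r\in\F_{2^n}^\times}\mathcal{N}_r$ with all fibres in bijection via the group action. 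Combining the two ingredients, for any $\mathcal{N}'\subseteq\mathcal{N}$ with $\sdist(\mathcal{N}')\geq 4$, each local piece $\mathcal{N}'\cap\mathcal{N}_r$, after being pulled back to $\mathcal{N}_1$ by the appropriate element of $\Sigma$, still satisfies the distance bound and hence has size at most $M_1$. Summing over $r$ gives the upper bound $M_1(2^n-1)$ in part~\eqref{lma:local:1}; the union $\bigcup_{g\in\Sigma}g(\mathcal{N}_1')$ achieves it. For part~\eqref{lma:local:2}, equality in the sum forces each pulled-back piece to be a maximum $\mathcal{N}_g$, and independence of these choices across distinct $r$ is immediate from the decoupling, yielding exactly $t^{2^n-1}$ configurations.

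The hard part will be the decoupling step: it rests on the subtle fact that the standard rearrangement produces an exact (not multiple) cover of the free lines of $\mathcal{A}$, a property inherited from Lemma~\ref{lma:remove}. Everything else reduces to the straightforward observation that $\Sigma$ acts freely and transitively on $\{\mathcal{N}_r\}_r$ through distance-preserving bijections, so the counting multiplies independently across the $2^n-1$ orbits on $S\setminus\{0\}$.
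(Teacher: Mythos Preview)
Your proposal is correct and follows essentially the same approach as the paper: the paper's proof also rests on the two ingredients that $\Sigma$ maps $\mathcal{N}_1$ bijectively onto each $\mathcal{N}_r$ (via rotation-invariance of $\mathcal{A}$) and that new planes through distinct points of $S$ cannot share a line disjoint from $S$ (by the exact cover property of the standard rearrangement), hence have distance $\geq 4$. Your write-up simply spells out the decoupling argument in more detail than the paper's terse version, and the minor misattribution (the standard rearrangement is introduced after Lemma~\ref{lma:standard}, not Lemma~\ref{lma:remove}) does not affect the mathematics.
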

\begin{proof}
  Since $\mathcal{A}$ is rotation-invariant, we have $g(\mathcal{N}_1)
  =\mathcal{N}_r$ for the (unique) element $g\in\Sigma$ that acts as
  $(x,y)\mapsto(x,ry)$. Hence solving the optimization problem for
  $\mathcal{N}_r$ is equivalent to solving it for $\mathcal{N}_1$. The
  proof is completed by the observation that
  $\sdist(\mathcal{N}_r,\mathcal{N}_{r'})\geq 4$ if $r\neq r'$, i.e.,
  new planes meeting $S$ in different points do not conflict.\footnote{As
  part of the standard rearrangement, they cannot have a line disjoint from
  $S$ in common.}
\end{proof}
The planes in $\mathcal{N}_1$ satisfy $r\dickson(Z)=1$ in the
$\U(Z,T,f)$-representation stated earlier, hence have the form
$N=\U\bigl(Z,P_1,\dickson(u,x)/\dickson(Z)\bigr)$ or
$N=\U\bigl(Z,P_1,\dickson(u+c,x)/\dickson(Z)\bigr)$.\footnote{The
  meaning of $c$ and $u$ is the same as in \eqref{eq:cpoints}.}  The
smallest sets $\mathcal{A}$ satisfying the conditions of
Lemma~\ref{lma:local} correspond to a nontrivial coset $u+W$ and hence
to a solid of $\PG(\F_{2^n})$ containing $W$, viz.\
$T=\langle W,u\rangle$. If $u$ is fixed then, since
$\langle Z,u\rangle$ and $\langle Z,u+c\rangle$ account precisely for
the $14$ planes $\neq W$ in $T$, we may view this correspondence as a
parametrization of the new planes in $\mathcal{N}_1$ by those
planes. If $\mathcal{A}$ is chosen as the largest set satisfying the
conditions of Lemma~\ref{lma:local} (i.e., the set of all rank-$3$
binomials in $\gabidulin$), all planes of $\PG(\F_{2^n})$ intersecting
$W$ in a line are used as parameters.

\begin{definition}
  \label{dfn:cgraph}
  The \emph{collision graph} $\graph_W$ has as its vertices the planes
  in $\PG(\F_{2^n})$ meeting $W$ in a line. Two vertices
  $E=\langle Z,u\rangle$ and $E'=\langle Z',u'\rangle$ are adjacent in
  $\graph_W$ if and only if the new planes $N,N'\in\mathcal{N}_1$
  parametrized by $E$, $E'$ have a point outside $S$ (and hence a line
  through $P_1$) in common.
\end{definition}
By Lemma~\ref{lma:local}, the graph $\graph_W$ encapsulates all
information necessary for the determination of the largest sets of new
planes that can be added to the expurgated Gabidulin code without
decreasing the subspace distance to $2$, for all ``starter'' sets
$\mathcal{A}\cap\mathcal{R}\subset\gabidulin_W$ satisfying the
assumptions of the lemma: A specific set $\mathcal{A}$ corresponds,
via the parametrization $E=\langle Z,u\rangle$, to a certain vertex
subgraph of $\graph_W$, and the maximum-size cocliques of this
subgraph yields precisely the largest sets
$\mathcal{N}_1'\subseteq\mathcal{N}_1$ of new planes that can be added
in $P_1$; in particular, the number $M_1$ in Part~\eqref{lma:local:1}
of the lemma equals the independence number of the subgraph.

If $\mathcal{A}\cap\mathcal{R}$ consists of $t$ cosets of
$\mathcal{T}$, the size of the largest $(v,M,4;3)_2$ code
$\mathcal{C}$ that can be obtained by this method equals
\begin{equation}
  \label{eq:netgain}
  \#\mathcal{C}=4^n-8t(2^n-1)+M_1(2^n-1)=4^n+(M_1-8t)(2^n-1).
\end{equation}
We call the quantity $(M_1-8t)(2^n-1)$ the \emph{net gain} of
$\mathcal{C}$ (relative to an LMRD code) and the quantity $M_1-8t$ the
local net gain of $\mathcal{C}$. The present optimization problem may then be
stated as follows.
\begin{RRP}
  Among all $\gauss{n}{3}{2}$, $n=v-3$, choices for the plane $W$ in
  $\PG(\F_{2^n})$ and all $2^{2^{n-3}-1}-1$ choices for a (non-empty)
  subset $\mathcal{A}\subset\gabidulin_W$ satisfying the conditions
  of Lemma~\ref{lma:local}, determine those which result in the
  largest (local) net gain for the augmented expurgated Gabidulin code.
\end{RRP}
The LRMD code bound corresponds to a local net gain of
$\gauss{n}{2}{2}/(2^n-1)=\frac{2^{n-1}-1}{3}$.

At the first glance, this new rearrangement problem seems just as
difficult as the original one, but this is not true. Collisions
between new planes through $P_1$ can be characterized algebraically be
a certain invariant of the parametrizing plane $E$, as observed in
\cite{mt:alb80}. This forces the collision graph $\graph_W$ to have a
very special structure, which greatly simplifies the computation of
the independence numbers of the relevant subgraphs.

\begin{definition}
  \label{dfn:sickson}
  The \emph{$\sigma$-invariant} of a plane $E$ in $\PG(\F_{2^n})$
  intersecting $W$ in a line $Z$ is defined as
  \begin{equation*}
    \sickson_W(E)=\frac{\dickson(E)}{\dickson(Z)^3},
  \end{equation*}
  where $\dickson(E)$ denotes the product of all points in $E$.
\end{definition}
The plane invariant
$\dickson(E)$ is the $3$-dimensional analogue of the line invariant
$\dickson(L)$ and another instance of the Dickson invariants mentioned
in Remark~\ref{rmk:dickson}.

Now we can state and prove the key result
on the algebraic characterization of collisions between new planes.

\begin{theorem}
  \label{thm:sickson}
  Two distinct planes $E$, $E'$ in $\PG(\F_{2^n})$
  intersecting $W$ in a line
  form an edge of the collision graph $\graph_W$ if and only if
  $\sickson_W(E)=\sickson_W(E')$.
\end{theorem}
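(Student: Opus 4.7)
My approach will be to translate the collision condition geometrically into a condition on the linear maps $g(x)=\dickson(u,x)/\dickson(Z)$ parametrizing the new planes $N\in\mathcal{N}_1$, and then show that this condition algebraically reduces to equality of the $\sigma$-invariants.

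\emph{First}, I would observe that because both new planes $N,N'$ pass through the common point $P_1$, they share a line iff they share a second point, which must necessarily lie outside $S$. Using the $\U(Z,P_1,g)$-parametrization from Section~\ref{sec:prelim}, the six points of $N$ outside $S$ are $(x,g(x)+\epsilon)$ with $x\in Z\setminus\{0\}$ and $\epsilon\in\F_2$. Thus the collision condition reduces to: there exists $x\in (Z\cap Z')\setminus\{0\}$ with $g(x)+g'(x)\in\F_2$. I would note that $(Z\cap Z')\setminus\{0\}$ is always nonempty, since $Z$ and $Z'$ are $2$-subspaces of the $3$-dimensional space~$W$.

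\emph{The key step} is to establish the pointwise identity
\begin{equation*}
  g(x)^2+g(x)=x^2\,\sickson_W(E)\quad\text{for all } x\in Z\setminus\{0\}.
\end{equation*}
I would derive this by picking a companion $y\in Z\setminus\F_2 x$, so that $Z=\langle x,y\rangle$ and $\dickson(Z)=xy(x+y)$, leading to the explicit formula $g(x)=u(u+x)/\bigl(y(x+y)\bigr)$. Then I would exploit the char-$2$ factorization $\spol_Z(u)=u(u+x)\bigl[u(u+x)+y(x+y)\bigr]$, which follows from the identity $(u+y)(u+x+y)=u(u+x)+y(x+y)$, together with the easily-checked product decomposition $\dickson(E)=\dickson(Z)\spol_Z(u)$ for $E=\langle x,y,u\rangle$. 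This is where the real work lies; everything else is essentially mechanical.

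\emph{Finally}, summing the identities for $E$ and $E'$ at $x\in(Z\cap Z')\setminus\{0\}$ gives
\begin{equation*}
  \bigl(g(x)+g'(x)\bigr)^2+\bigl(g(x)+g'(x)\bigr)=x^2\bigl(\sickson_W(E)+\sickson_W(E')\bigr).
\end{equation*}
The left-hand side vanishes iff $g(x)+g'(x)\in\F_2$; since $x\neq 0$, the right-hand side vanishes iff $\sickson_W(E)=\sickson_W(E')$. Because the right-hand side is independent of $x$, the reduced collision condition of the first paragraph holds for some (equivalently, every) $x\in(Z\cap Z')\setminus\{0\}$ iff $\sickson_W(E)=\sickson_W(E')$, yielding the claimed equivalence. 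The hard part will be the clean derivation of the identity in the second paragraph, which neatly packages $\sickson_W$ as the obstruction to $g(x)$ being $\F_2$-valued; once it is available, the argument collapses to a one-line computation.
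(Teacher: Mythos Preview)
Your proof is correct and follows essentially the same route as the paper's: both reduce the collision condition to $g(x)+g'(x)\in\F_2$ for some $x\in(Z\cap Z')\setminus\{0\}$ and then establish the pointwise identity $g(x)^2+g(x)=x^2\sickson_W(E)$. The only difference is in how that identity is derived: the paper recognizes $\dickson(L)\bigl(\dickson(Z)+\dickson(L)\bigr)/\dickson(Z)^2$ as the product of the three line invariants through $x$ divided by $\dickson(Z)^3$ (invoking Lemma~\ref{lma:dickson}), whereas you compute directly via the factorization $\dickson(E)=\dickson(Z)\spol_Z(u)$ and the char-$2$ splitting of $\spol_Z(u)$---a self-contained variant that avoids the appeal to the correlation lemma.
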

\begin{proof}
  Let $E=\langle Z,u\rangle$, $E'=\langle Z',u'\rangle$ with
  $Z=E\cap W$, $Z'=E'\cap W$, and let $N$, $N'$ the new planes
  corresponding to $E$, $E'$.  Inspecting \eqref{eq:cpoints} and using
  $r\dickson(Z)=1$, we find that the $6$ points on $N$ outside $S$ have the form
  \begin{equation*}
    \left(z,\frac{\dickson(L)}{\dickson(Z)}\right),\quad
    \left(z,\frac{\dickson(L)}{\dickson(Z)}+1\right)\quad\text{with
    $z\in Z$ and $L=\langle z,u\rangle$},
  \end{equation*}
  and similarly for $N'$. Hence $N$, $N'$ have a point outside $S$ in
  common if and only if there exists $z\in Z\cap Z'$ such that, with
  $L=\langle z,u\rangle$ and $L'=\langle z,u'\rangle$,
  \begin{equation*}
    \left(\frac{\dickson(L)}{\dickson(Z)}\right)^2
    +\frac{\dickson(L)}{\dickson(Z)}=
    \left(\frac{\dickson(L')}{\dickson(Z')}\right)^2
    +\frac{\dickson(L')}{\dickson(Z')}
  \end{equation*}
  or, equivalently,
  \begin{equation*}
    \frac{\dickson(Z)\dickson(L)\bigl(\dickson(Z)+\dickson(L)\bigr)}
    {\dickson(Z)^3}=
    \frac{\dickson(Z')\dickson(L')\bigl(\dickson(Z')+\dickson(L')\bigr)}
    {\dickson(Z')^3}.
  \end{equation*}
  But, since $\dickson(Z)+\dickson(L)$ is the line invariant of the
  third line in $E$ through $z$, and similarly for
  $\dickson(Z')+\dickson(L')$ (cf.\ Lemma~\ref{lma:dickson} and the
  remarks preceding it), the latter identity reduces to
  $z^2\dickson(E)/\dickson(Z)^3=z^2\dickson(E')/\dickson(Z')^3$ and
  hence to $\sickson(E)=\sickson(E')$.

  Conversely, $\sickson(E)=\sickson(E')$ implies that $N$, $N'$ have a
  point of the form $(z,y)$ in common for every $z\in Z\cap Z'$. Since
  $Z$ and $Z'$ intersect (as lines of a projective plane), there
  exists at least one such $z$.
\end{proof}
\begin{remark}
  \label{rmk:sickson}
  Using the notation of Theorem~\ref{thm:sickson} and its proof, the
  planes $N$, $N'$ can form a collision only if $Z\neq Z'$. This fact
  follows, e.g., from the last part of the proof: $Z=Z'$ implies that $N$,
  $N'$ have at least $3$ points outside $S$ (one for every $z\in Z$)
  in common; hence $N=N'$. Alternatively, the fact is a consequence of
  Theorem~\ref{thm:remove}\eqref{thm:remove:1}, since $Z=Z'$ implies
  that $N$, $N'$ correspond to the same
  $r=\dickson(Z)^{-1}=\dickson(Z')^{-1}$.

  Thus Theorem~\ref{thm:sickson} says in particular that the map
  $E\mapsto\dickson(E)$ is one-to-one on the set of planes $E\neq W$ of
  $\PG(\F_{2^n})$ containing a fixed line $Z\subset W$. This remains
  true when $E=W$ is included and is a special case of a more general
  fact; cf.\ Theorem~\ref{thm:allthat} in Section~\ref{sec:allthat}.
\end{remark}
Theorem~\ref{thm:sickson} has the following immediate corollary.
\begin{corollary}
  \label{cor:sickson}
  The independence number of $\graph_W$ is equal to the number of
  different values taken by $\sigma_W$ (i.e., the size of
  $\image(\sigma_W)$). Likewise, the independence number of the
  subgraph of $\graph_W$ corresponding to
  $\mathcal{A}\subset\gabidulin_W$ as in Lemma~\ref{lma:local} equals
  the number of different values taken by $\sigma_W$ on the set of all
  planes $E\neq W$ that are contained in one of the solids
  $T\supset W$ corresponding to $\mathcal{A}$.
\end{corollary}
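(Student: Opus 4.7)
The plan is to exploit Theorem~\ref{thm:sickson} to show that the collision graph $\graph_W$ has the very rigid structure of a disjoint union of cliques (a so-called \emph{cluster graph}), and then invoke the elementary fact that the independence number of such a graph is simply the number of cliques.

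The first step is to observe that, by Theorem~\ref{thm:sickson}, the binary relation on $V(\graph_W)$ defined by $E\simeq E'\Longleftrightarrow E=E'\text{ or }\{E,E'\}\in E(\graph_W)$ coincides with the relation $\sigma_W(E)=\sigma_W(E')$. Being defined through equality of a function value, the latter is an equivalence relation, so the vertex set of $\graph_W$ partitions into the fibres of $\sigma_W$; each fibre induces a clique, and no edges run between distinct fibres. The second step is then purely combinatorial: any independent set $I$ meets each fibre in at most one vertex, so $\#I\leq\#\image(\sigma_W)$, while picking one representative from every fibre produces an independent set achieving this bound. This establishes the first assertion.

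For the second assertion I would specialise to the vertex subset $V_{\mathcal{A}}\subseteq V(\graph_W)$ consisting of those planes $E\neq W$ contained in some solid $T\supset W$ arising from the $\mathcal{T}$-coset decomposition of $\mathcal{A}\cap\mathcal{R}$. The induced subgraph $\graph_W[V_{\mathcal{A}}]$ inherits the cluster structure, its cliques being the nonempty intersections $\sigma_W^{-1}(a)\cap V_{\mathcal{A}}$. Applying the same argument as above, its independence number equals the number of distinct values taken by $\sigma_W$ on $V_{\mathcal{A}}$, which is exactly the count stated in the corollary.

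The main (indeed only) obstacle has already been surmounted, namely Theorem~\ref{thm:sickson} itself; once the algebraic characterisation of collisions via $\sigma_W$-equality is available, the corollary is a short exercise on cluster graphs. The one point deserving a brief sanity check is that the vertex set of the induced subgraph really is the union over the relevant solids $T\supset W$ of the $14$ planes $E\subset T$ with $E\cap W$ a line, which follows from the parametrisation $E=\langle Z,u\rangle\leftrightarrow f+\mathcal{T}$ explained immediately before Definition~\ref{dfn:cgraph}.
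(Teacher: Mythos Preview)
Your proof is correct and follows exactly the same approach as the paper: invoke Theorem~\ref{thm:sickson} to conclude that $\graph_W$ is a disjoint union of cliques (indexed by the fibres of $\sigma_W$), whence the independence number equals the number of fibres. The paper's proof is just the one-line version of your argument.
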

\begin{proof}
  By Theorem~\ref{thm:sickson}, $\graph_W$ is a disjoint union of
  cliques, and the corollary follows.
\end{proof}
Although not explicitly stated in the corollary, it is clear
that all maximum-size cocliques of $\graph_W$ are obtained by selecting for
each $y\in\image(\sigma_W)$ precisely one plane $E$ with
$\sickson_W(E)=y$ and that the number of maximum-size cocliques is
equal to the product of the multiplicities of all
$y\in\image(\sigma_W)$, and similarly for the
subgraphs of $\graph_W$ corresponding to
$\mathcal{A}\subset\gabidulin_W$.

Another pleasant consequence of Theorem~\ref{thm:sickson} is the
invariance of the present optimization problem under a fairly large
collineation group of $\PG(\F_{2^n})$ acting on the set of all
$\gauss{n}{3}{2}$ planes $W$ in $\PG(\F_{2^n})$, which can serve as
the first factor of the ambient space $V$.

Let $G$ be the subgroup of $\GL(\F_{2^n})$ generated by the
multiplication maps $x\mapsto rx$, $r\in\F_{2^n}^\times$, and the
Frobenius automorphism $\frob\colon x\mapsto x^2$. The group $G$ is a
Frobenius group with kernel $H=\{x\mapsto
rx;r\in\F_{2^n}^\times\}\cong\F_{2^n}^\times$ and complement
$K=\langle\frob\rangle=\Aut(\F_{2^n}/\F_2)$ and has
order $\#G=n(2^n-1)$. It can also be seen as the normalizer of $H$ in
$\GL(\F_{2^n})$.
\begin{corollary}
  \label{cor:G}
  If $W_1$ and $W_2$ are in the same orbit of $G$ on planes in
  $\PG(\F_{2^n})$ then the collision graphs $\graph_{W_1}$ and
  $\graph_{W_2}$ are isomorphic, and the corresponding RRP's are
  equivalent in the sense that a solution of one problem immediately
  gives a corresponding solution of the other problem.
\end{corollary}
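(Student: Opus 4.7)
The plan is to verify the corollary for each of the two types of generators of $G$---multiplications $\mu_r\colon x\mapsto rx$ and the Frobenius $\frob\colon x\mapsto x^2$---and then compose. For $g\in G$ and $W_1$ a plane, $g$ is an $\F_2$-linear bijection of $\F_{2^n}$ and hence induces a collineation of $\PG(\F_{2^n})$. In particular, $g$ maps planes $E$ with $E\cap W_1$ a line bijectively onto planes $E'$ with $E'\cap g(W_1)$ a line, so we obtain a bijection $E\mapsto g(E)$ between the vertex sets of $\graph_{W_1}$ and $\graph_{g(W_1)}$. By Theorem~\ref{thm:sickson}, the edge set of $\graph_W$ is determined by the fibres of $\sickson_W$, so it suffices to show that $g$ transports $\sickson_{W_1}$-value equalities to $\sickson_{g(W_1)}$-value equalities; equivalently, that the induced map on the image of $\sickson$ is a bijection.

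For a plane $E$ containing the line $Z\subseteq W_1$ and a scalar $r\in\F_{2^n}^\times$, the seven nonzero points of $rE$ are obtained from those of $E$ by multiplication by $r$, giving $\dickson(rE)=r^7\dickson(E)$, and likewise $\dickson(rZ)=r^3\dickson(Z)$. Hence
\begin{equation*}
  \sickson_{rW_1}(rE)=\frac{r^7\dickson(E)}{\bigl(r^3\dickson(Z)\bigr)^3}=r^{-2}\sickson_{W_1}(E),
\end{equation*}
which is a bijection of $\F_{2^n}^\times$. For the Frobenius, bilinearity in characteristic two gives $\dickson\bigl(\frob(a),\frob(b)\bigr)=\dickson(a,b)^2$, and by multiplicativity of squaring $\dickson\bigl(\frob(E)\bigr)=\dickson(E)^2$ and $\dickson\bigl(\frob(Z)\bigr)=\dickson(Z)^2$, so $\sickson_{\frob(W_1)}\bigl(\frob(E)\bigr)=\sickson_{W_1}(E)^2$, again a bijection. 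In both cases $\sickson$-value equalities are preserved, yielding $\graph_{W_1}\cong\graph_{W_2}$.

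For the equivalence of the RRP's, the parametrization $E=\langle Z,u\rangle$ identifies planes containing $Z\subset W_1$ with cosets $u+W_1$, and hence vertex subgraphs of $\graph_{W_1}$ corresponding to subsets $\mathcal{A}\subseteq\gabidulin_{W_1}$ as in Lemma~\ref{lma:local} with unions of solids $T\supset W_1$. Since $g$ maps such solids to solids containing $g(W_1)$, it induces a bijection between the admissible $\mathcal{A}$'s for $W_1$ and for $W_2=g(W_1)$. Moreover, $g$ normalizes the Singer group $H$, so rotation-invariance is preserved; and because our graph isomorphism $E\mapsto g(E)$ restricts to the corresponding vertex subgraphs, independence numbers---and thus local net gains---coincide. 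A maximum-size coclique on one side therefore transports to a maximum-size coclique on the other.

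The main subtlety is the last point: the RRP is not only a graph-theoretic problem but also depends on the geometric data (which solids, and hence which $\mathcal{A}$, are being used). The argument hinges on the observation that the correspondence $\mathcal{A}\leftrightarrow\{\text{solids}\supset W\}$ commutes with the action of $g$ and with rotation-invariance, so that solutions really do transfer verbatim between the two problems.
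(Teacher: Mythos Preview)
Your proof is correct and follows essentially the same route as the paper: verify the claim on the two types of generators of $G$, using Theorem~\ref{thm:sickson} to reduce the graph-isomorphism assertion to the statement that $\sickson$ transforms by a bijection of $\F_{2^n}^\times$. Your scaling factor $\sickson_{rW_1}(rE)=r^{-2}\sickson_{W_1}(E)$ is in fact the correct one (the paper's displayed computation drops the cube in the denominator and obtains $r^4$, a harmless typo since either constant gives a bijection), and your treatment of the second assertion---tracking how admissible sets $\mathcal{A}$ correspond to unions of solids through $W$ and are carried along by $g$---is more explicit than the paper's, which simply declares it clear.
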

\begin{proof}
  If $W_2=rW_1$ for some $r\in\F_{2^n}^\times$ then $E\mapsto rE$
  sends the planes meeting $W_1$ in a line $Z$ to those meeting $W_2$
  in $rZ$. Clearly we have $\dickson(rZ)=r^3\dickson(Z)$,
  $\dickson(rE)=r^7\dickson(E)$, and hence
  \begin{equation*}
    \sickson_{W_2}(rE)=\frac{\dickson(rE)}{\dickson(rZ)}
    =\frac{r^7\dickson(E)}{r^3\dickson(Z)}=r^4\sickson_{W_1}(E).
  \end{equation*}
  Together with Theorem~\ref{thm:sickson} it
  follows that $E\mapsto rE$ represents a graph isomorphism
  $\graph_{W_1}\to\graph_{W_2}$. Similarly, if $W_2=\frob(W_1)=W_1^2$ then
  $\sickson_{W_2}(E^2)=\sickson_{W_1}(E)^2$ and $E\mapsto E^2$
  represents a graph isomorphism $\graph_{W_1}\to\graph_{W_2}$.
  Since $G$ is generated by the maps $x\mapsto rx$ and $\frob$, the
  first assertion follows. The second assertion is clear.
\end{proof}
We close this section with two examples. The first example recalls the
construction of a $(7,301,4;3)_2$ code in \cite{lt:0328,mt:alb80},
which was used as the intermediate step in an alternative construction of a
currently best known $(7,329,4;3)_2$ code in \cite{lt:0328,mt:alb80},
following the original discovery of such a code in \cite{braun-reichelt14}.
\begin{example}[$v=7$]
  \label{ex:v=7}
  Here $n=4$ and we represent $\F_{16}$ as $\F_2(\xi)$ with
  $\xi^4+\xi+1=0$. Since all planes in $\PG(\F_{16})$ are conjugate
  under multiplication (by Singer's Theorem), Corollary~\ref{cor:G}
  implies that we can choose $W$ freely; for convenience, we take
  $W=\bigl\{x\in\F_{16};\trace(x)=0\bigr\}
  =\{1,\xi,\xi^2,\xi^4,\xi^5,\xi^8,\xi^{10}\}$.
  Since
  $\mathcal{A}=\mathcal{R}\setminus\mathcal{T}
  =\bigl\{ux^2+u^2x;\trace(u)=1\bigr\}$
  is a single coset in this case, there is only one choice for the
  rotation-invariant set $\mathcal{A}$ in the RRP; $\mathcal{A}$
  consists of the $120$ rank-$3$ binomials $r(ux^2+u^2x)$ with
  $u\in\F_{16}\setminus W$, $r\in\F_{16}^\times$.

  The next step is to compute the $\sickson$-invariants of the $14$
  planes $E\neq W$ in $\PG(\F_{16})$. If $E=aW$,
  $a\in\F_{16}\setminus\F_2$, then
  $\dickson(E)=a^7\dickson(W)=a^7$ and $Z=E\cap W=W\cap
  aW=\{x\in\F_{16};x^8+x^4+x^2+x=a^{-8}x^8+a^{-4}x^4+a^{-2}x^2+a^{-1}x=0\}$. 
  Eliminating, we find $(1+a^4)x^4+(1+x^6)x^2+(1+a^7)x=0$,
  $\dickson(Z)=\frac{1+a^7}{1+a^4}$,
  \begin{align*}
    \sickson(aW)=\frac{a^7(1+a^4)^3}{(1+a^7)^3}
    =\frac{a^7(1+a^4)(1+a^8)}{(1+a^7)(1+a^{14})}
    =\frac{1+a^4}{1+a^{14}}=a+a^2+a^3+a^4,
  \end{align*}
  a special case of \cite[Lemma~7]{mt:alb80}. This tells us that
  $\sickson(aW)=1$ for $a\in\{\xi^3,\xi^6,\xi^9,\xi^{12}\}$ (the 5-th
  primitive roots of unity in $\F_{16}^\times$) and, not difficult to
  verify from the representation $\sickson(aW)=a(1+a)^3$, that
  $E\mapsto\sickson(E)$ maps the remaining $10$ planes $E\neq W$
  bijectively onto
  $\F_{16}^\times\setminus\{1,xi^3,\xi^6,\xi^9,\xi^{12}\}$.

  It follows that the collision graph $\graph$ consists of a complete
  graph $\mathrm{K}_4$ (formed by $\xi^3W$, $\xi^6W$, $\xi^9W$,
  $\xi^{12}W$) and $10$ isolated vertices. The independence number of
  $\graph$ is $11$, and we can add locally at $P_1$ the $10$ new
  planes parametrized by $aW$, $a$ not a 3rd power in
  $\F_{16}^\times$, to the expurgated Gabidulin code, and exactly one
  of the four new planes parametrized by $\xi^{3i}W$
  ($1\leq i\leq 4$). Finally, rotating through the $15$ points of $S$,
  we obtain $4^{15}$ different extensions of the expurgated Gabidulin
  code to a plane subspace code of size $256-120+15\times 11=301$.
  Exactly $4$ of these are rotation-invariant. Explicit
  representations of the new codewords through $P_1$ may be obtained
  by writing $aW=\langle Z,u\rangle$ with $Z=W\cap aW$ and evaluating
  $N=\U\bigl(Z,P_1,\dickson(u,x)/\dickson(Z)\bigr)$
  explicitly.\footnote{Since $N=\langle\graph_f,P_1\rangle$ for
    $f\colon Z\to\F_{16}$, $x\mapsto\dickson(u,x)/\dickson(Z)$, it
    suffices to determine the graphs of these linear maps.}
\end{example}
Our second example provides a solution of the RRP in the smallest open case.
\begin{example}[$v=8$]
\label{ex:v=8}
  Here we have $n=5$ and the relevant extension field of $\F_2$ is
  $\F_{32}=\F_2[\alpha]$ with $\alpha^5+\alpha^2+1=0$. In this case
  the group $G$, of order $5\cdot 31=155=\gauss{5}{3}{2}$, acts
  (sharply) transitive of the set of all planes in
  $\PG(\F_{32})\cong\PG(4,\F_2)$. Hence, by Corollary~\ref{cor:G} it
  suffices to consider one particular plane $W$, which we can take as
  $W=\langle 1,\alpha,\alpha^2\rangle$, determining the ambient space
  $V=W\times\F_{32}$.

  The number of nontrivial cosets in
  $\mathcal{R}/\mathcal{T}\cong\F_{32}/W$ is $3$, so that $2^3-1=7$
  different coset combinations need to be considered for $\mathcal{A}$. The
  $14$ new planes in $\mathcal{N}_1$ corresponding to a minimal choice
  of $\mathcal{A}$ ($1$ coset) are represented by the $14$ planes
  $\neq W$ in one of the solids $T_1,T_2,T_3\supset W$ in
  $\PG(\F_{32})$. Using the computer algebra system SageMath
  (\texttt{www.sagemath.org}), we have found that $\sickson_W$ takes
  $11$ distinct values on each of the three $14$-sets of planes $E$
  contained in a fixed $T_i$ and that the values of multiplicity $>1$
  in each case are the same, viz.\ $\alpha^{23},\alpha^{25},\alpha^{28}$, all of
  multiplicity $2$.\footnote{Since
    $\alpha^{23}+\alpha^{25}+\alpha^{28}=0$, these ``collision
    values'' form a line in $\PG(\F_{32})$.}  This implies
  $\#\image(\sickson_W)=27$ (the $4$ ``missing'' values are
  $\alpha^4$, $\alpha^5$, $\alpha^{21}$, $\alpha^{30}$) and that the
  independence number of any subgraph of $\graph_W$ involving $1$,
  $2$, $3$ cosets is $11$, $19$, and $27$ respectively.\footnote{Thus
    the whole graph $\graph_W$, corresponding to all $3$ cosets, has
    independence number $27$ and consists of $24$ isolated vertices
    and $3$ cliques of size $6$, which intersect the three $14$-sets
    in a $2$-set.}  Since $11-8=19-2\cdot 8=27-3\cdot 8=3$, the local
  net gain when using $t\in\{1,2,3\}$ cosets is always $3$, a constant
  independent of $t$, and the global net gain is $3\cdot 31=93$.  Thus
  the largest subspace codes obtained by rotation-invariant
  rearrangement from the Gabidulin code have size
  $1024+93=1117$. Since the largest known $(8,M,4;3)_2$ code has size
  $1326$ (at the time of writing this article, cf.\
  \cite{braun-ostergard-wassermann15}), these codes are not
  particularly good. However, they can be further extended by planes
  meeting $S=\{0\}\times\F_{32}$ in a line; cf.\ Section~\ref{sec:comp}.
\end{example}
Although the machinery developed so far is sufficient for a complete
solution of the RRP for $v=8$ (with the aid of a computer), the
computational complexity of the presently used naive method for
determining the best coset combination (``exhaustive search'') is
prohibitive for only slightly larger values of $v$.\footnote{For
  example, there is absolutely no way to settle the case $v=11$ in
  this manner in a reasonable time, since the number of coset
  combinations that must be explored is
  $2^{2^{v-6}-1}-1=2^{31}-1=2147483647$.}

\section{Dickson Invariants, Subspace Polynomials and All
  That}\label{sec:allthat}

In this section we develop the machinery that is needed to understand
the subsequent analysis of the collision graphs $\graph_W$ and their
condensed variants, called \emph{collision matrices}, which
provide all essential information about the clique sizes in
$\graph_W$. The relevant background can be found in the seminal work
of \name{Ore} on linearized polynomials \cite{ore33a} and to some
extent in \name{Berlekamp}'s book \cite[Ch.~11]{berlekamp68}. As usual
we will restrict ourselves to the ground field $\F_2$, although
everything can be generalized with only little more effort to $\F_q$.

A convenient starting point is the following $2$-analogue of the
well-known Vandermonde determinant evaluation due to
\name{E.~H.~Moore} \cite{moore96},
which holds as an identity in the polynomial ring
$\F_2[X_1,\dots,X_k]$:
\begin{equation}
  \label{eq:dickson}
    \dickson(X_1,\dots,X_k)=
    \begin{vmatrix}
      X_1&X_2&\hdots&X_k\\
      X_1^2&X_2^2&\hdots&X_k^2\\
      X_1^{2^2}&X_2^{2^2}&\hdots&X_k^{2^2}\\
      \vdots&\vdots&&\vdots\\
      X_1^{2^{k-1}}&X_2^{2^{k-1}}&\hdots&X_k^{2^{k-1}}
    \end{vmatrix}
    =\prod_{\lambda\in\F_2^k\setminus\{\vek{0}\}}
    (\lambda_1X_1+\dots+\lambda_kX_k).
\end{equation}
This identity can be proved using induction on $k$ and
\begin{equation}
  \label{eq:ickson}
  \dickson(X_1,\dots,X_k)=\dickson(X_1,\dots,X_{k-1})
  \prod_{\lambda\in\F_2^{k-1}}(X_k+\lambda_1X_1+\dots+\lambda_{k-1}X_{k-1}).
\end{equation}
The latter identity is obtained in the same way as for the ordinary
Vandermonde determinant by viewing the determinant in
\eqref{eq:dickson} as a polynomial in $X_k$ over the rational function
field $\F_2(X_1,\dots,X_{k-1})$ and determining its zeros.

Now let $U$ be a $k$-dimensional $\F_2$-subspace of $\F_{2^n}$ with
basis $\beta_1,\dots,\beta_k$. Replacing $k$ by $k+1$ in
\eqref{eq:ickson} and making appropriate substitutions, we obtain the
identity
\begin{equation}
  \label{eq:spol}
  \begin{aligned}
    \prod_{u\in U}(X+u)
    &=\prod_{\lambda\in\F_2^{k}}(X+\lambda_1\beta_1+\dots+\lambda_k\beta_k)\\
    &=\frac{\dickson(\beta_1,\dots,\beta_k,X)}{\dickson(\beta_1,\dots,\beta_k)}
    =\sum_{i=0}^ka_iX^{2^i}\in\F_{2^n}[X].
  \end{aligned}
\end{equation}
The last step uses Laplace expansion of the
determinant in \eqref{eq:dickson} along the last column and shows that
$a_i$ is equal to the quotient of a certain $k\times k$ determinant
involving the powers $\beta_j^{2^t}$,
$t\in\{0,\dots,k\}\setminus\{i\}$, and
$\dickson(\beta_1,\dots,\beta_k)$.

The polynomial $\spol_U(X)=\prod_{u\in U}(X+u)$, which is monic and
has the elements of $U$ as roots of multiplicity $1$, is known as the
\emph{subspace polynomial} associated with $U$. From the previous
computation we have that $\spol_U(X)$ is a monic linearized polynomial
($2$-polynomial) of symbolic degree $k$. Conversely, a
monic $2$-polynomial in $\F_{2^n}[X]$ is a subspace polynomial of some
$\F_2$-subspace of $\F_{2^n}$ if it
splits into linear factors and the coefficient $a_0$ of $X$ is $\neq
0$.\footnote{Thus every monic $2$-polynomial with $a_0\neq 0$ becomes
  a subspace polynomial when considered over its splitting field.}

The coefficients of $\spol_U(X)$ will be called \emph{Dickson
  invariants} of $U$ and denoted by
$\dickson_i(U)=a_{k-i}$.\footnote{The usual Dickson invariants studied
  in Modular Invariant Theory (and here specialized to the case $q=2$)
  are the polynomial counterparts
$\dickson_i^{(k)}(X_1,\dots,X_k)\in\F_2[X_1,\dots,X_k]$, which can be
obtained in the same way as the coefficients of the ``generic''
subspace polynomial
$\prod_{\lambda\in\F_2^k}(X+\lambda_1X_1+\dots+\lambda_kX_k)
\in\F_2(X_1,\dots,X_k)[X]$.
see \cite{campbell-wehlau11,derksen-kemper02,smith97,wilkerson83}.
The indexing of
$\dickson_i$, $\dickson_i^{(k)}$ follows the convention used for the
elementary symmetric polynomials; note, however, that the degree of
$\dickson_i^{(k)}$ is not $i$ but $2^k-2^{k-i}$.} For the
last Dickson invariant $\dickson_k(U)=\prod_{u\in
  U\setminus\{0\}}u=\dickson(\beta_1,\dots,\beta_k)$, the coefficient
of $X$ in $\spol_U(X)$, we usually write simply
$\dickson(U)$.\footnote{This is compatible
with the notation used in the cases $k=2,3$, which have already been
considered.}

The set $L_n$ of $2$-polynomials in $\F_{2^n}[X]$ is closed with
respect to addition and composition of polynomials (also called
``symbolic multiplication''), defined by
$f(X)\circ g(X)=f\bigl(g(X)\bigr)$, and forms a ring
$(L_n,+,\circ)$. The ring $L_n$ is non-commutative (except
for $n=1$) and isomorphic to the skew polynomial
ring $\F_{2^n}[Y;\frob]$ via
$\sum a_iX^{2^i}\mapsto\sum a_iY^i$. It is this thus quite
easy to work with.\footnote{The ring
  $\F_{2^n}[Y;\frob]$ differs from the ordinary polynomial ring
  $\F_{2^n}[Y]$ by the law $Ya=\frob(a)Y=a^2Y$, which leads to a formula
  for the coefficients of $f(X)\circ g(X)$ similar to ordinary
  polynomial multiplication except that the coefficients of $g(X)$ are
  ``twisted'' by powers of the Frobenius automorphism. In the special case
  $n=1$ (which is not of interest to us here) this ring, and hence
  $L_1$ as well, is commutative and isomorphic to $\F_2[Y]$.}
One can show that $L_n$ has no zero divisors and admits one-sided
analogues of the Euclidean Algorithm for symbolic division. The center
of $L_n$ consists of all polynomials of the form
$c_0X+c_1X^{2^n}+c_2X^{4^n}+\dotsb$ (``$2^n$-polynomials''). In
particular, $X^{2^n}+X$ is central and
$(X^{2^n}+X)=L_n(X^{2^n}+X)=(X^{2^n}+X)L_n$ is a two-sided ideal in
$L_n$.\footnote{This fact is needed below.}

The computation of subspace polynomials is facilitated by the
following symbolic factorization into linear factors:
\begin{equation}
  \label{eq:linfact}
  \spol_U(X)=(X^2+s_{k-1}(\beta_k)X)\circ
  \dots\circ(X^2+s_1(\beta_2)X)\circ(X^2+\beta_1 X),
\end{equation}
where $s_i(X)=\spol_{\langle\beta_1,\dots,\beta_i\rangle}(X)$.
This identity follows by induction on $k$ from
$\spol_U(X)=\bigl(X^2+\spol_{U'}(\beta)X\bigr)\circ\spol_{U'}(X)$,
valid for any incident pair $U'\subset U$ of subspaces of $\F_{2^n}$
with $\dim(U')=k-1$, $\dim(U)=k$, and for any $\beta\in U\setminus
U'$. The latter can be proved as follows: Since
$U=U'\uplus(\beta+U')$, we have
\begin{equation}
  \label{eq:ilinfact}
  \begin{aligned}
    \spol_U(X)
    &=\spol_{U'}(X)\spol_{U'}(X+\beta)\\
    &=\spol_{U'}(X)\bigl(\spol_{U'}(X)+\spol_{U'}(\beta)\bigr)\\
    &=\spol_{U'}(X)^2+\spol_{U'}(\beta)\spol_{U'}(X)\\
    &=(X^2+\spol_{U'}(\beta)X)\circ\spol_{U'}(X),
  \end{aligned}
\end{equation}
as desired. In the special case $U=\F_{2^n}$ we obtain a symbolic
linear factorization of $X^{2^n}+X$, which can be seen as a
noncommutative analogue of the ordinary factorization of
$X^n+1$. Since such factorizations are in 1-1 correspondence with
ordered bases of $U$, they are highly
non-unique.\footnote{For example, the number of different symbolic
  linear factorizations of $X^{2^n}+X$ in $L_n$ is equal to
  $(2^n-1)(2^n-2)\dotsm(2^n-2^{n-1})=\#\GL(n,\F_2)$.}

An important aspect of the theory is the interplay between $2$-polynomials and
$\F_2$-linear endomorphisms of extension fields $\F_{2^n}$. Every such
endomorphism is represented by a unique $2$-polynomial in $L_n$ of
symbolic degree $<n$, and composition of endomorphisms corresponds to
symbolic multiplication of $2$-polynomials. Using these facts it is
not hard to see that $\End(\F_{2^n}/\F_2)\cong
L_n/(X^{2^n}+X)$.\footnote{Using again skew polynomial rings, this
  can be extended to $\End(\F_{2^n}/\F_2)\cong
  L_n/(X^{2^n}+X)\cong\F_{2^n}[Y;\frob]/(Y^n+1)$.}
A subspace polynomial represents a particular $\F_2$-linear
map $\F_{2^n}\to\F_{2^n}$, $x\mapsto\spol_U(x)$ with kernel $U$.
Its image will be called the \emph{opposite subspace} of $U$ and denoted
by $U^\opp$.


The opposite subspace $U^\opp$ is characterized by the identity
$\spol_{U^\opp}(X)\circ\spol_U(X)=X^{2^n}+X$, which follows from the
observation that both sides represent the zero map in
$\End(\F_{2^n}/\F_2)$ (and considering symbolic
degrees).\footnote{Berlekamp \cite[Th.~11.35]{berlekamp68} denotes
  $\spol_{U^\opp}(X)$ by $\spol_U(X)^\ast$, which conflicts with our
  (\name{Ore's}) notation for the adjoint subspace or polynomial; see
  below.} Since
$X^{2^n}+X$ is in the center of the ring $L_n$, we also have
$\spol_U(X)\circ\spol_{U^\opp}(X)=X^{2^n}+X$,\footnote{This is an
  instance of the following general fact about noncommutative integral
  domains $R$: $ab\in\zentrum(R)$ implies $ab=ba$, which is immediate
  from $a(ab)=(ab)a=a(ba)$.} and hence $U^{\opp\opp}=U$.  Further, we see
that the polynomials in $\F_{2^n}[X]$ that represent subspace
polynomials of $\F_2$-subspaces of $\F_{2^n}$ are precisely the monic
symbolic divisors of $X^{2^n}+X$ in $L_n$ (on either
side),\footnote{If $X^{2^n}+X=a(X)\circ b(X)$ then
  $x\mapsto a\bigl(b(x)\bigr)$ is the zero map in
  $\End(\F_{2^n}/F_2)$, and hence the dimensions of the kernels of
  $x\mapsto a(x)$ and $x\mapsto b(x)$ must be equal to their symbolic
  degrees, i.e., $a(X)$ and $b(X)$ must be subspace polynomials
  (provided they are monic).}  and symbolic factors of subspace
polynomials (on either side) are again subspace polynomials. But
symbolic products of subspace polynomials are not necessarily subspace
polynomials, in view of the extra conditions imposed on the factors
in~\eqref{eq:linfact}.\footnote{More precisely, $\spol_U(X)\circ\spol_V(X)$
  is a subspace polynomial iff $U\subseteq V^\opp$.}

Subspace polynomials are thus analogous to generator polynomials of
cyclic codes, and the subspace polynomial of $U^\opp$ is the
$2$-analogue, or non-commutative analogue, of the check polynomial of
a cyclic code. But the analogy goes still further, as we will see in a
moment.

Apart from the opposite subspace $U^\opp$, the following subspaces
associated with $U$ will be needed later: The \emph{orthogonal subspace}
of $U$ is $U^\perp=\{y\in\F_{2^n};\trace(xy)=0\text{ for all }x\in
U\}$, and the \emph{adjoint subspace} of $U$ is the subspace $U^\adj$
generated by
\begin{equation}
  \label{eq:adj}
  \frac{\dickson(\beta_2,\dots,\beta_k)}{\dickson(\beta_1,\dots,\beta_k)},\;
  \frac{\dickson(\beta_1,\beta_3,\dots,\beta_k)}
  {\dickson(\beta_1,\dots,\beta_k)},\;
  \dots,\;\frac{\dickson(\beta_1,\dots,\beta_{k-1})}
  {\dickson(\beta_1,\dots,\beta_k)}.
\end{equation}
The definition of $U^\adj$ does not depend on the chosen basis
$\beta_1,\dots,\beta_k$ of $U$, as is easily shown by multilinear
expansion. Moreover, one can show that the elements in \eqref{eq:adj}
are linearly independent (cf.\ the proof of Theorem~\ref{thm:allthat}
below) and hence $\dim(U^\adj)=\dim(U)$.

\name{Ore}~\cite{ore33a} has defined $U^\adj$ in a different way as
the space whose square $(U^\adj)^2=\frob(U^\adj)$ is the set of roots
of the $2$-polynomial $\spol_U(X)^\adj=\sum_{i=0}^k(a_iX)^{2^{k-i}}$,
a $2$-analogue of the reciprocal polynomial
$X^{\deg p}p(X^{-1})$ associated with an ordinary polynomial $p(X)$,
and derived the basis of $U$ listed in
\eqref{eq:adj}. \name{Ore}~\cite{ore33a} has also shown that the
nonzero elements in $U^\adj$ are precisely the elements
$A^{-1}\in\F_{2^n}$ for which $X^2+AX$ is a symbolic left factor of
$\spol_U(X)$.\footnote{Compare this with the obvious fact that the
  $X^2+aX$ is a symbolic right factor of $U$ iff $a\in U$. More
  generally, we have $V\subseteq U$ ($V^\adj\subseteq U^\adj)$
  iff $\spol_V(X)$ is a symbolic
  right (respectively, left) factor of $\spol_U(X)$.} This
follows from
\begin{equation}
  \label{eq:leftfact}
  \spol_U(X)=(X^2+AX)\circ\spol_V(X)=\spol_V(X)^2+A\spol_V(X),
\end{equation}
which implies $A^{-1}=\frac{\dickson(V)}{\dickson(U)}$, together with
the observation that the nonzero elements of $U^\adj$ have the form
$\frac{\dickson(V)}{\dickson(U)}$ for some subspace $V\subset U$ of
codimension $1$; cf.\ Theorem~\ref{thm:allthat} below.

The following lemma relates the
three subspaces associated with $U$ and will be needed in Section
~\ref{sec:cont}.
\begin{lemma}
  \label{lma:adjfroboppperp}
  For any subspace of $\F_{2^n}$ we have $(U^\adj)^2=(U^\opp)^\perp$.
\end{lemma}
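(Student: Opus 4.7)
The plan is to translate both sides of the identity into statements about kernels of linearized polynomials, and then exhibit the two polynomials as being equal up to a Frobenius power (and hence having identical zero sets).

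First, I would exploit the identification of $\spol_U$ with the $\F_2$-linear endomorphism $x\mapsto\spol_U(x)$ of $\F_{2^n}$. By definition $\kernel(\spol_U)=U$, so $\image(\spol_U)$ has $\F_2$-codimension $k$. Since $\spol_U(X)\circ\spol_{U^\opp}(X)=X^{2^n}+X$, every element of $\image(\spol_U)$ is a root of $\spol_{U^\opp}$, and therefore $\image(\spol_U)\subseteq U^\opp$; dimension count forces $\image(\spol_U)=U^\opp$.

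Next I would bring in the trace pairing $\langle x,y\rangle=\trace(xy)$ on $\F_{2^n}$, with respect to which every $\F_2$-linear endomorphism $L$ has a unique adjoint $L^*$ satisfying $\image(L)^\perp=\kernel(L^*)$. A short manipulation using $\trace(z)=\trace(z^{2^{n-i}})$ and $x^{2^n}=x$ gives the well-known formula
\begin{equation*}
  \spol_U^*(y)=\sum_{i=0}^{k}a_i^{2^{n-i}}y^{2^{n-i}},
\end{equation*}
where $\spol_U(X)=\sum_{i=0}^{k}a_iX^{2^i}$. Combining this with the first step yields $(U^\opp)^\perp=\kernel(\spol_U^*)$.

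The crucial observation is then that \name{Ore}'s adjoint polynomial $\spol_U^\adj(Y)=\sum_{i=0}^{k}a_i^{2^{k-i}}Y^{2^{k-i}}$ differs from $\spol_U^*$ only by an outer Frobenius power: raising the identity for $\spol_U^\adj(Y)$ to the $2^{n-k}$-th power and using $(\cdot)^{2^{n-k}}$ term by term gives
\begin{equation*}
  \bigl[\spol_U^\adj(Y)\bigr]^{2^{n-k}}
  =\sum_{i=0}^{k}a_i^{2^{n-i}}Y^{2^{n-i}}
  =\spol_U^*(Y)
\end{equation*}
for every $Y\in\F_{2^n}$. Since $\F_{2^n}$ has no zero divisors, $\spol_U^*$ and $\spol_U^\adj$ have the same zero set. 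By the definition of $U^\adj$ recalled just before the lemma, this zero set equals $\frob(U^\adj)=(U^\adj)^2$, and the chain $(U^\opp)^\perp=\kernel(\spol_U^*)=\kernel(\spol_U^\adj)=(U^\adj)^2$ closes the proof.

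The only step that is not entirely routine is the passage between $\spol_U^*$ and $\spol_U^\adj$; once the correct exponent $2^{n-k}$ is identified, however, the verification is a one-line calculation, so I expect no real obstacle.
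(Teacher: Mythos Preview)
Your argument is correct, though the first paragraph is redundant: in the paper $U^\opp$ is \emph{defined} as $\image(\spol_U)$, so nothing needs to be proved there (and the composition you cite is the one that gives $\image(\spol_{U^\opp})\subseteq U$, not the inclusion you state; the relevant order is $\spol_{U^\opp}\circ\spol_U=X^{2^n}+X$, which also holds since $X^{2^n}+X$ is central).

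The route, however, is genuinely different from the paper's. The paper works element-wise: it takes a nonzero $A^{-1}\in U^\adj$, uses the factorization $\spol_U(X)=(X^2+AX)\circ\spol_V(X)$ to write $\spol_U(x)/A^2=\bigl(\spol_V(x)/A\bigr)^2+\spol_V(x)/A$, and then invokes Hilbert's Satz~90 to get $\trace\bigl(\spol_U(x)/A^2\bigr)=0$, i.e.\ $A^{-2}\in(U^\opp)^\perp$; a dimension count finishes. Your argument is operator-theoretic: you compute the adjoint $\spol_U^*$ of the linear map $\spol_U$ with respect to the trace form, use the general fact $\image(L)^\perp=\kernel(L^*)$, and then observe that $\spol_U^*$ is the $2^{n-k}$-th Frobenius power of Ore's $\spol_U^\adj$, hence has the same zero set $(U^\adj)^2$. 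Your approach avoids Satz~90 entirely and makes the statement a formal consequence of duality; the paper's approach, on the other hand, makes immediately visible \emph{why} the square appears---it is precisely the $A^{-2}$ forced by writing $\spol_U/A^2$ in the Artin--Schreier form $z^2+z$.
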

\begin{proof}
  Dividing \eqref{eq:leftfact} by $A^2$ gives
  $\spol_U(X)/A^2=\bigl(\spol_V(X)/A\bigr)^2+\spol_V(X)/A$. Hence, by
  Hilbert's Satz~90, $\trace\bigl(\spol_U(x)/A^2\bigr)=0$ for all
  $x\in\F_{2^n}$. Since $U^\opp$ is the image of $x\mapsto\spol_U(x)$,
  this says $(U^\adj)^2\subseteq(U^\opp)^\perp$. Since
  $\dim(U^\adj)=\dim(U)=\dim\bigl((U^\opp)^\perp\bigr)$, the result
  follows.
\end{proof}
Since $(U^\adj)^2$ is the subspace associated with the ``reciprocal''
subspace polynomial $a_0^{-2^k}\spol_U(X)^\adj$,
Lemma~\ref{lma:adjfroboppperp} provides a nice $2$-analogue
of the well-known fact that the dual code $C^\perp$ of a cyclic code
$C$ is generated by the reciprocal of the check polynomial of $C$.

The following properties of the map $U\mapsto\dickson(U)$ will play a
crucial role in Section~\ref{sec:cont}, and we state them as a theorem.

\begin{theorem}
  \label{thm:allthat}
  Let $U$ be a $k$-subspace of $\F_{2^n}$,
  \begin{enumerate}[(i)]
  \item\label{thm:allthat:1} $V\mapsto\dickson(V)$ maps the
    ($k+1$)-subspaces of $\F_{2^n}$ containing $U$
    bijectively onto the $1$-subspaces of the space
    $\dickson(U)U^\opp$. The induced map from $\PG(\F_{2^n})/U$ to
    $\PG\bigl(\dickson(U)U^\opp)$ is a collineation.
  \item\label{thm:allthat:2} $V\mapsto\dickson(V)$ maps the
    ($k-1$)-subspaces of $\F_{2^n}$ contained in $U$
    bijectively onto the $1$-subspaces of
    $\dickson(U)U^\adj$. The induced map from $\PG(U)$ to
    $\PG\bigl(\dickson(U)U^\adj\bigr)$ is a correlation.
  \end{enumerate}
\end{theorem}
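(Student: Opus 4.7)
My plan is to handle the two parts separately, exploiting the symbolic factorizations \eqref{eq:ilinfact} and \eqref{eq:leftfact} together with the map $x\mapsto\spol_U(x)$ on $\F_{2^n}$.

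For Part \eqref{thm:allthat:1}, the starting point is \eqref{eq:ilinfact} applied with $U'=U$: for any $\beta\in\F_{2^n}\setminus U$ and $V=U\oplus\F_2\beta$, we have $\spol_V(X)=\bigl(X^2+\spol_U(\beta)X\bigr)\circ\spol_U(X)$. Reading off the coefficient of $X$ (using that the coefficient of $X$ in $(X^2+AX)\circ f(X)$ equals $A$ times the coefficient of $X$ in $f(X)$ whenever $f$ is a $2$-polynomial) yields $\dickson(V)=\dickson(U)\,\spol_U(\beta)$. The $\F_2$-linear map $\spol_U\colon\F_{2^n}\to\F_{2^n}$ has kernel $U$ and, by definition, image $U^\opp$, so it induces an $\F_2$-linear isomorphism $\F_{2^n}/U\to U^\opp$. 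Composing the natural identification $V\leftrightarrow V/U$ (between $(k+1)$-subspaces containing $U$ and $1$-subspaces of $\F_{2^n}/U$) with this isomorphism and then with the scaling $x\mapsto\dickson(U)\,x$ gives the claimed bijection of $1$-subspaces; since each step is an $\F_2$-linear isomorphism, the induced map on projective spaces is a collineation.

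For Part \eqref{thm:allthat:2}, I would start from a codim-$1$ subspace $V\subset U$ and apply the symbolic Euclidean algorithm in $L_n$: since $\spol_V(X)$ is a symbolic right divisor of $\spol_U(X)$, there is a unique $A\in\F_{2^n}^\times$ with $\spol_U(X)=(X^2+AX)\circ\spol_V(X)$, i.e.\ \eqref{eq:leftfact}. Comparing coefficients of $X$ gives $\dickson(U)=A\,\dickson(V)$, hence $\dickson(V)/\dickson(U)=A^{-1}$. By Ore's characterization (quoted right after \eqref{eq:leftfact}), the nonzero elements of $U^\adj$ are precisely these $A^{-1}$ as $V$ ranges over codim-$1$ subspaces of $U$, and distinct $V$ produce distinct $A$ by uniqueness of symbolic left division. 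This gives the bijection $V\mapsto\dickson(V)$ from codim-$1$ subspaces of $U$ onto the $2^k-1$ nonzero elements of $\dickson(U)\,U^\adj$.

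It remains to upgrade this bijection on points to a correlation. The clean way, which is the main technical step, is to show that any three hyperplanes $V_1,V_2,V_3$ of $U$ through a common $(k-2)$-subspace $U_0$ satisfy $\dickson(V_1)+\dickson(V_2)+\dickson(V_3)=0$. To see this, extend a basis $\beta_1,\dots,\beta_{k-2}$ of $U_0$ by $\beta_{k-1},\beta_k$, and view the Moore determinant
\begin{equation*}
  \dickson(\beta_1,\dots,\beta_{k-2},\gamma)=\sum_{i=0}^{k-2}c_i\gamma^{2^i}
\end{equation*}
as a $2$-polynomial in $\gamma$ via Laplace expansion along the last column. This polynomial vanishes on $U_0$, has symbolic degree $k-2$, and leading coefficient $c_{k-2}=\dickson(U_0)$, so it equals $\dickson(U_0)\spol_{U_0}(\gamma)$. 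Specializing $\gamma\in\{\beta_{k-1},\beta_k,\beta_{k-1}+\beta_k\}$ and using $\F_2$-linearity of $\spol_{U_0}$ gives the desired relation $\dickson(V_3)=\dickson(V_1)+\dickson(V_2)$. Thus pencils of hyperplanes in $U$ are mapped to collinear triples in $\PG(\dickson(U)\,U^\adj)$, which together with the bijectivity established above is exactly the statement that the map is a correlation (identifying pencils of hyperplanes in $\PG(U)$ with lines of the dual geometry). The main obstacle is this last collinearity identity; everything else is a direct consequence of the already-developed machinery on subspace polynomials and Dickson invariants.
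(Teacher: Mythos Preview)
Your argument for Part~\eqref{thm:allthat:1} is exactly the paper's: both read off $\dickson(V)=\dickson(U)\,\spol_U(\beta)$ from \eqref{eq:ilinfact} and then invoke that $\spol_U$ induces an $\F_2$-isomorphism $\F_{2^n}/U\to U^\opp$.

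For Part~\eqref{thm:allthat:2} the two proofs diverge. The paper establishes the bijection by a self-contained multilinear/minor computation: it writes $\dickson(V)/\dickson(U)$ as an $\F_2$-combination of the generators in \eqref{eq:adj}, identifies the coefficients as the $(k-1)$-minors of the coordinate matrix of a basis of $V$, and uses that every nonzero vector in $\F_2^k$ is the minor vector of some rank-$(k-1)$ matrix to get surjectivity; independence of the generators in \eqref{eq:adj} (and hence $\dim U^\adj=k$) is deduced as a by-product. You instead invoke Ore's left-factor characterization of $U^\adj$ as a black box, together with left-cancellation in $L_n$. This is shorter, but note a circularity hazard in the paper's narrative: the only justification the paper gives for Ore's characterization (the sentence right after \eqref{eq:leftfact}) explicitly forward-references Theorem~\ref{thm:allthat}. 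So if you want your argument to stand within the paper, you must appeal to Ore's original proof in \cite{ore33a} rather than to the paper's explanation, and you should say so.

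Your correlation argument is substantively the same as the paper's, just unpacked. The paper simply says: apply Part~\eqref{thm:allthat:1} to the $(k-2)$-subspace $U_0$ to see that the three hyperplanes of $U$ through $U_0$ are sent to a line in $\dickson(U_0)U_0^\opp\subset\dickson(U)U^\adj$. Your identity $\dickson(\beta_1,\dots,\beta_{k-2},\gamma)=\dickson(U_0)\spol_{U_0}(\gamma)$ and the subsequent specialization is precisely a rederivation of Part~\eqref{thm:allthat:1} for $U_0$; you could have just cited it.
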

Note that our earlier Lemma~\ref{lma:dickson} is precisely the case
$k=3$ of Part~\eqref{thm:allthat:2}.
\begin{proof}
  \eqref{thm:allthat:1} From either \eqref{eq:spol} or
  \eqref{eq:ilinfact} we have $\dickson(V)=\spol_U(\beta)\dickson(U)$
  for any ($k+1$)-subspace $V\supset U$ and any
  $\beta\in V\setminus U$. As $\beta$ varies over $V\setminus U$,
  $\spol_U(\beta)$ varies over $U^\opp\setminus\{0\}$. This proves the
  first assertion. The second assertion follows from linearity of
  $\beta\mapsto\spol_U(\beta)$.

  \eqref{thm:allthat:2} Let $V$ be a ($k-1$)-subspace of
  $U$ with basis $v_1,\dots,v_{k-1}$. Using multilinear expansion,
  $\dickson(V)/\dickson(U)=\dickson(v_1,\dots,v_{k-1})/\dickson(U)$
  can be expressed as an $\F_2$-linear combination of the elements in
  \eqref{eq:adj}, showing that
  $\dickson(V)\in\dickson(U)U^\adj$. The coefficients $\mu_i$ of this linear
  combination are easily seen to be the minors of order $k-1$
  of the matrix $(\lambda_{ij})\in\F_2^{k\times(k-1)}$
  determined by $v_j=\sum_{i=1}^k\lambda_{ij}\beta_i$. Now it is
  well-known that for any
  $(\mu_1,\dots,\mu_k)\in\F_2^k\setminus\{\vek{0}\}$ there exists a
  corresponding matrix $(\lambda_{ij})$ of rank $k-1$ having $\mu_i$
  as their order $k-1$ minors.\footnote{Essentially this amounts to
    the fact that any nonzero vector in $\F_2^k$ can be completed to
    an invertible $k\times k$ matrix.} Hence any element of $U^\adj$
  has the form $\dickson(V)/\dickson(U)$ and $V\mapsto\dickson(V)$
  maps onto $\dickson(U)U^\adj$. The
  elements in \eqref{eq:adj} are linearly independent. (A dependency
  relation would yield a subspace $V$ with $\dickson(V)=0$ by what we
  have just shown; this is impossible.) Hence $\dim(U^\adj)=\dim(U)$
  and $V\mapsto\dickson(V)$ maps the ($k-1$)-subspaces of
  $U$ bijectively onto $\dickson(U)U^\adj$. Finally, the correlation property
  follows from Part~\eqref{thm:allthat:1}: If $V_0\subset U$ has
  dimension $k-2$ then the ($k-1$)-subspaces between $V_0$ and
  $U$ are mapped to a $2$-subspace of
  $\dickson(V_0)V_0^\opp$, which must be contained in $\dickson(U)U^\adj$.
\end{proof}
Theorem~\ref{thm:allthat} has the following rather
curious corollary.
\begin{corollary}
  \label{cor:allthat}
  The $k$-subspaces $U\subseteq\F_{2^v}$ with fixed last Dickson
  invariant $\dickson(U)=a$, $a\in\F_{2^v}^\times$, form a subspace
  code $\mathcal{C}(a)$ with minimum distance at least $4$.
\end{corollary}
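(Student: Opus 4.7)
The plan is to translate the distance condition into a statement about common hyperplanes and then invoke Theorem~\ref{thm:allthat}\eqref{thm:allthat:1} to force a contradiction.

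First I would recall that for $k$-subspaces $U_1, U_2$, the subspace distance formula gives $\sdist(U_1,U_2) = 2k - 2\dim(U_1\cap U_2)$, so the claim $\sdist(\mathcal{C}(a))\geq 4$ is equivalent to the assertion that no two distinct members of $\mathcal{C}(a)$ share a common $(k-1)$-subspace. Hence it suffices to argue: if $V\subset\F_{2^v}$ has dimension $k-1$ and $U_1\neq U_2$ are two distinct $k$-subspaces containing $V$, then $\dickson(U_1)\neq\dickson(U_2)$.

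For this I would apply Theorem~\ref{thm:allthat}\eqref{thm:allthat:1} with the role of ``$U$'' played by $V$ (so $k$ is replaced by $k-1$). This gives a bijection between the $k$-subspaces of $\F_{2^v}$ containing $V$ and the $1$-subspaces of $\dickson(V)V^\opp$, realized by $W\mapsto\F_2\dickson(W)$. Over $\F_2$ a $1$-subspace is determined by its unique nonzero element, and $\dickson(W)\neq 0$ for any subspace $W$, so distinct $k$-subspaces through $V$ have distinct values of $\dickson$. In particular, $\dickson(U_1)=\dickson(U_2)=a$ forces $U_1=U_2$, contradicting the assumption.

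The argument is essentially immediate from Theorem~\ref{thm:allthat}, so there is no genuinely hard step; the only subtlety is to note that the theorem as stated is about $(k+1)$-subspaces through a $k$-subspace $U$, but since the statement is entirely local to the containment $V\subset W$, one may apply it with the dimensions shifted down by one, i.e., with $V$ in the role of $U$. Once this reindexing is made, injectivity of $W\mapsto\F_2\dickson(W)$ on the upper interval $[V,\F_{2^v}]\cap\ksubspaces{\F_{2^v}}{k}$ yields the result.
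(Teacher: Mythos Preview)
Your proof is correct and follows the same approach as the paper: apply Theorem~\ref{thm:allthat}\eqref{thm:allthat:1} to the common $(k-1)$-subspace $V=U_1\cap U_2$ to see that distinct $k$-subspaces through $V$ have distinct Dickson invariants. The paper additionally remarks that one could equally well apply Part~\eqref{thm:allthat:2} to $U_1+U_2$, but your argument via Part~\eqref{thm:allthat:1} is exactly one of the two routes offered there.
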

\begin{proof}
  If $U,V\in\mathcal{C}$ satisfy
$\sdist(U,V)=2$ then $\dim(U\cap V)=k-1$, $\dim(U+V)=k+1$. Now either
Part~\eqref{thm:allthat:1} of the theorem applied to $W=U\cap V$, or
Part~\eqref{thm:allthat:2} applied to $W=U+V$ yields a contradiction.
\end{proof}
By Corollary~\ref{cor:allthat}, the set of $k$-subspaces of $\F_2^v$
is partitioned into $2^v-1$ (possibly empty) subspace codes of minimum
distance $\geq 4$. Viewed as single codes, these are not very
interesting, since they are too small. In the case $k=3$ the largest
of these codes has guaranteed size
\begin{equation*}
  \#\mathcal{C}(a)
  \geq\frac{1}{2^v-1}\gauss{v}{3}{2}=\frac{(2^{v-1}-1)(2^{v-2}-1)}{21}
  \approx\frac{8}{21}2^{2(v-3)},
\end{equation*}
which is considerably
smaller than the size of the corresponding Gabidulin codes.
A computational study of combinations of several
codes $\mathcal{C}(a)$ has not produced anything of
value. Nevertheless, the corollary and its ramifications deserve
further research. Links between subspace polynomials and subspace
codes are also studied in \cite{ben-sasson-etal14}, with emphasis on
the case of cyclic subspace codes. The codes of
Corollary~\ref{cor:allthat} tend to be transversal to the
corresponding Singer orbits and are decidedly noncyclic. On the other
hand, the gap theorem in \cite[Cor.~2]{ben-sasson-etal14} and our
corollary are similar---in both cases certain Dickson invariants are
prescribed.

We close this section with some simple examples of subspace
polynomial computations. About point polynomials $\spol_P(X)=X^2+aX$,
$P=\F_2a$, there is not much to say. Line polynomials $\spol_L(X)$,
$L=\langle a,b\rangle$ are easily computed using \eqref{eq:linfact} and
have the form $\spol_L(X)=\bigl(X^2+(b^2+ab)X\bigr)\circ(X^2+aX)
=(X^2+aX)^2+(b^2+ab)(X^2+aX)=X^4+(a^2+ab+b^2)X^2+(ab^2+a^2b)X$, recovering
$\dickson_2(L)=\dickson(L)=ab^2+a^2b=ab(a+b)$ and showing
that $\dickson_1(L)=a^2+ab+b^2$. We have $\dickson_1(L)=0$ iff
$(a/b)^3=1$, a property characterizing the lines of the standard line
spread in $\PG(\F_{2^n})$, $n$ even (since these lines have the form
$a\F_4^\times$). The most prominent example is the well-known
$\spol_{\F_4}(X)=X^4+X$. Sometimes it is useful to express
$\dickson_1(L)$, $\dickson_2(L)$ in terms of each other, for which we
note that $\dickson_1(L)\dickson_2(L)=ab(a^3+b^3)=ab^4+a^4b$. Often
subspace polynomial computations can be simplified by taking the
action of the multiplicative group, $U\mapsto rU$ for
$r\in\F_{2^n}^\times$ and of the Frobenius automorphism ($U\mapsto
U^2=\{u^2,u\in U\}$) into account. This is illustrated in the final
example of this section.
\begin{example}
  \label{ex:spol}
  We compute the subspace polynomials for all lines and planes in
  $\F_{16}$. One example of a plane polynomial is the well-known polynomial
  $\spol_{W}(X)=X^8+X^4+X^2+X$ of the trace-zero plane
  $W=\{x\in\F_{16};\trace(x)=0\}$. Any further plane has the form
  $rW$ for a unique $r\in\F_{16}^\times$, and the corresponding polynomial is
  \begin{equation*}
    \spol_{rW}(X)=r^8\spol_{W}(r^{-1}X)=X^8+r^4X^4+r^6X^2+r^7X.
  \end{equation*}
  For line polynomials $\spol_L(X)=X^4+a_1X^2+a_0X$ we use the formulas
  \begin{align*}
    \spol_{rL}(X)&=r^4\spol_L(r^{-1}X)=X^4+r^2a_1X^2+r^3a_0X,\\
    \spol_{L^2}(X)&=\prod_{u\in L}(X+u^2)=X^4+a_1^2X^2+a_0^2X,
  \end{align*}
  i.e.\ $L\mapsto rL$ and $L\mapsto L^2$ correspond to
  $(a_0,a_1)\mapsto(r^3a_0,r^2a_1)$ and
  $(a_0,a_1)\mapsto(a_0^2,a_1^2)$. Using $\F_{16}=\F_2(\xi)$ with
  $\xi^4+\xi+1=0$ and $\omega=\xi^5$, we have
  $\F_{16}^\times=\{\xi^i;0\leq i\leq 14\}$ and
  $\F_2(\omega)=\F_4$ inside $\F_{16}$. The lines of the
  standard spread of $\PG(\F_{16})$ have polynomials
  $\spol_{\xi^i\F_4}(X)=X^4+\xi^{3i}X$ ($0\leq i\leq 4$).
  The remaining $30$ lines in $\PG(\F_{16})$ can be obtained from a
  single line $L$ 
  as $rL$ or $rL^2$. Since with $rL$ the coefficient $a_1$ in
  $\spol_{rL}(X)$ ``rotates'' through all of $\F_{16}^\times$, there
  exist exactly two line polynomials of the form $X^4+X^2+a_0$, which
  must be $X^4+X^2+\omega X$ and $X^4+X^2+\omega^2X$ (since their
  conjugates under $x\mapsto x^2$ are line polynomials as well).
  Hence the remaining $30$ line polynomials are
  $X^4+r^2X^2+r^3\omega^i X$ with $i\in\{1,2\}$ and
  $r\in\F_{16}^\times$, and it only remains to identify the line $L$
  behind $X^4+X^2+\omega X=X(X^3+X+\omega)$ by factoring this
  polynomial. It turns out that
  $L=\{\xi^{10},\xi^{11},\xi^{14}\}=\xi^{10}\langle1,\xi\rangle$.\footnote{Perhaps
    the easiest way to find $L$ is to compute the line polynomial of
    $\langle 1,\xi\rangle=\{1,\xi,\xi^4\}$, which is
    $X^4+\xi^{10}X^2+\xi^5X$, and rotate by
    $r=(\xi^{10})^{-1/2}=\xi^{10}$.}
\end{example}


\section{Continuation of the Analysis}\label{sec:cont}

Our first goal in this section is to determine the set of multiple
values of $\sickson_W$. The planes $E$ in $\PG(\F_{2^n})$ meeting $W$
in a line fall into $7$ classes according to their intersection
$Z=E\cap W$. By Theorem~\ref{thm:allthat}\eqref{thm:allthat:1}, the
restriction of $\sickson_W\colon E\mapsto\dickson(E)/\dickson(Z)^3$ to
such a class is one-to-one with image $\dickson(Z)^{-2}Z^\opp$,
provided we include $\dickson(W)/\dickson(Z)^3$ in the image. We will
refer to the $7$ values $\dickson(W)/\dickson(Z)^3$, $Z\subset W$ a
line, as the ``missing values'' (or ``missing points'') of
$\sickson_W$.\footnote{The missing values are counted with their
  multiplicities. They are not necessarily all different, and they can
  still be in the image of $\sickson_W$; cf.\ Theorem~\ref{thm:cspace}
  for the details.}

For $x\in\F_{2^n}\setminus Z$ we have $\dickson\bigl(\langle
Z,x\rangle)=\spol_Z(x)\dickson(Z)$ and hence
\begin{equation*}
  \sickson_W\bigl(\langle Z,x\rangle\bigr)=\frac{\spol_Z(x)}{\dickson(Z)^2}.
\end{equation*}
For $x,y\in\F_{2^n}$ we set
\begin{equation}
  \label{eq:innpro}
  \innpro{x}{y}_Z=\trace\left(\frac{\spol_Z(x)y^2}{\dickson(Z)^2}\right).
\end{equation}

\begin{lemma}
  \label{lma:innpro}
  For any line $Z$ in $\PG(\F_{2^n})$ the map
  $\F_{2^n}\times\F_{2^n}\to\F_2$, $(x,y)\mapsto\innpro{x}{y}_Z$ is a
  symmetric $\F_2$-bilinear form with radical $Z$ and associated quadratic form
  $x\mapsto\innpro{x}{x}_Z=\trace(a_1x^4/a_0^2)$, where
  $a_0=\dickson(Z)$ and $a_1=\dickson_1(Z)$.
\end{lemma}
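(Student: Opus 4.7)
The plan is to verify the four assertions directly from the definition, using only the $\F_2$-linearity of $\spol_Z$ (which is a $2$-polynomial), the $\F_2$-linearity and Frobenius-invariance of the trace ($\trace(z^2)=\trace(z)$), and the fact that $\spol_Z$ has kernel exactly $Z$.

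Bilinearity is immediate: $x\mapsto\spol_Z(x)$ is $\F_2$-linear, $y\mapsto y^2$ is $\F_2$-linear, and $\trace$ is $\F_2$-linear, so $(x,y)\mapsto\trace\bigl(\spol_Z(x)y^2/a_0^2\bigr)$ is $\F_2$-bilinear. For symmetry I would expand
\[
\innpro{x}{y}_Z=\trace\!\left(\frac{x^4y^2}{a_0^2}\right)+\trace\!\left(\frac{a_1x^2y^2}{a_0^2}\right)+\trace\!\left(\frac{xy^2}{a_0}\right).
\]
The middle term is manifestly symmetric in $x,y$. Applying $\trace(z^2)=\trace(z)$ with $z=x^2y/a_0$ turns the first term into $\trace(x^2y/a_0)$, and likewise $\innpro{y}{x}_Z$ contributes $\trace(y^2x/a_0)$ from its first term. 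Combined with the third terms $\trace(xy^2/a_0)$ and $\trace(yx^2/a_0)$ respectively, both $\innpro{x}{y}_Z$ and $\innpro{y}{x}_Z$ reduce to $\trace(a_1x^2y^2/a_0^2)+\trace(x^2y/a_0)+\trace(xy^2/a_0)$, proving symmetry.

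For the radical, fix $x$ and let $y$ vary. Since $y\mapsto y^2$ is a bijection of $\F_{2^n}$, the map $y\mapsto\trace\bigl(\spol_Z(x)y^2/a_0^2\bigr)$ is the zero functional precisely when $\spol_Z(x)/a_0^2$, and hence $\spol_Z(x)$, is zero, by non-degeneracy of the trace form $\F_{2^n}\times\F_{2^n}\to\F_2$, $(u,v)\mapsto\trace(uv)$. But the kernel of $\spol_Z$ is exactly $Z$, so the radical equals $Z$.

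Finally, setting $y=x$ gives
\[
\innpro{x}{x}_Z=\trace\!\left(\frac{x^6}{a_0^2}\right)+\trace\!\left(\frac{a_1x^4}{a_0^2}\right)+\trace\!\left(\frac{x^3}{a_0}\right),
\]
and since $\trace(x^6/a_0^2)=\trace\bigl((x^3/a_0)^2\bigr)=\trace(x^3/a_0)$, the first and third terms cancel in characteristic $2$, leaving $\innpro{x}{x}_Z=\trace(a_1x^4/a_0^2)$. There is no real obstacle here — the whole lemma is a bookkeeping exercise, the only subtle point being the repeated use of $\trace(z^2)=\trace(z)$ to simplify terms involving powers of $2$.
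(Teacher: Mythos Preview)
Your proof is correct and follows essentially the same route as the paper: expand $\spol_Z(x)y^2/a_0^2$ into its three monomial summands and use Frobenius-invariance of the trace to rearrange. The only cosmetic difference is that the paper, after applying the trace trick, collects the three terms back into the closed form $\trace\bigl(x^2\spol_Z(y)/a_0^2\bigr)=\innpro{y}{x}_Z$, whereas you reduce both $\innpro{x}{y}_Z$ and $\innpro{y}{x}_Z$ to the common symmetric expression $\trace(a_1x^2y^2/a_0^2)+\trace(x^2y/a_0)+\trace(xy^2/a_0)$; the radical and quadratic-form arguments are identical.
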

\begin{proof}
  Bilinearity is clear. Further, we have
  \begin{equation}
    \label{eq:innprosum}
    \frac{\spol_Z(x)y^2}{\dickson(Z)^2}=\frac{(x^4+a_1x^2+a_0x)y^2}{a_0^2}
    =\frac{x^4y^2}{a_0^2}+\frac{a_1x^2y^2}{a_0^2}+\frac{xy^2}{a_0}.
  \end{equation}
  Now note that the trace of the sum in \eqref{eq:innprosum} does not
  change if we conjugate the three summands
    individually (!) with powers of the Frobenius
    automorphism.\footnote{We have found this ``trace trick''
    useful on several occasions.}
  Expressing everything in terms of $x^2$, we get
  \begin{align*}
    \innpro{x}{y}_Z
    &=\trace\left(\frac{x^2y}{a_0}+\frac{a_1x^2y^2}{a_0^2}
      +\frac{x^2y^4}{a_0^2}\right)\\
    &=\trace\left(\frac{x^2(a_0y+a_1y^2+y^4)}{a_0^2}\right)=\innpro{y}{x}_Z.
  \end{align*}
  Since the trace is nondegenerate and $\spol_Z(x)=0$ iff $x\in Z$,
  the radical of $\innpro{x}{y}_Z$ must be $Z$.
  Finally, substituting $y=x$ into \eqref{eq:innprosum} turns the third
  summand into a conjugate of the first, giving
  $\innpro{x}{x}_Z=\trace(a_1x^4/a_0^2)$.
\end{proof}
\begin{theorem}
  \label{thm:cspace}
  \begin{enumerate}[(i)]
  \item\label{thm:cspace:geom} The spaces $\dickson(Z)^{-2}Z^\opp$,
    $Z\subset W$ a line, mutually intersect in $(W^2)^\perp$, and
    hence account for all ($n-2$)-subspaces of $\F_{2^n}$
    containing $(W^2)^\perp$.
  \item\label{thm:cspace:w2perp}
    The set of multiple values of $\sickson_W$ is precisely the subspace
    $(W^2)^\perp$.\footnote{Orthogonality is taken with respect to the
      trace bilinear form.}
  \end{enumerate}
\end{theorem}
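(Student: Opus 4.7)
The engine of the argument is the identity
\begin{equation*}
\dickson(Z)^{-2}Z^\opp=(Z^2)^\perp
\end{equation*}
valid for every line $Z$ in $\PG(\F_{2^n})$. I would derive it from Lemma~\ref{lma:adjfroboppperp} together with the explicit adjoint: writing $Z=\langle a,b\rangle$ and reading off \eqref{eq:adj} gives $Z^\adj=\dickson(Z)^{-1}Z$, whence $(Z^\opp)^\perp=(Z^\adj)^2=\dickson(Z)^{-2}Z^2$, and taking perpendicular (using $(cV)^\perp=c^{-1}V^\perp$ for $c\in\F_{2^n}^\times$) yields the identity. An alternative derivation uses Lemma~\ref{lma:innpro}: the form $\innpro{x}{z}_Z=\trace(\spol_Z(x)z^2/\dickson(Z)^2)$ has radical $Z$, so $\spol_Z(x)/\dickson(Z)^2$ lies in $(Z^2)^\perp$ for every $x\in\F_{2^n}$, and a dimension count ($n-2$ on each side) forces equality.

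With this identity, Part~\eqref{thm:cspace:geom} becomes formal. Squaring is $\F_2$-linear and $\sum_{Z\subset W}Z=W$, so $\sum_{Z\subset W}Z^2=W^2$ and
\begin{equation*}
\bigcap_{Z\subset W}\dickson(Z)^{-2}Z^\opp=\bigcap_{Z\subset W}(Z^2)^\perp=\Bigl(\sum_{Z\subset W}Z^2\Bigr)^\perp=(W^2)^\perp.
\end{equation*}
For the ``hence'' clause: $(W^2)^\perp$ has codimension $3$ in $\F_{2^n}$, so the $(n-2)$-subspaces of $\F_{2^n}$ containing $(W^2)^\perp$ biject with the points of $\PG\bigl(\F_{2^n}/(W^2)^\perp\bigr)$, of which there are $7$. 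The seven spaces $(Z^2)^\perp$ are pairwise distinct (distinct $Z$ give distinct $Z^2$ since Frobenius is a bijection on subspaces, and perp is a bijection on subspaces of fixed codimension) and each contains $(W^2)^\perp$, so they exhaust all seven.

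For Part~\eqref{thm:cspace:w2perp}, I would split by whether $v\in\F_{2^n}^\times$ lies in $(W^2)^\perp$. If $v\notin(W^2)^\perp$, then any two distinct lines $Z_1,Z_2\subset W$ span $W$, so $(Z_1^2)^\perp\cap(Z_2^2)^\perp=(W^2)^\perp$ and $v$ belongs to at most one $(Z^2)^\perp$; hence $v$ is attained by at most one plane $E\neq W$ meeting $W$ in a line, so it is not a multiple value. If $v\in(W^2)^\perp\setminus\{0\}$, then $v$ belongs to every $(Z^2)^\perp$, and the only obstruction to $v$ being attained by a plane $E\neq W$ with $E\cap W=Z$ is the ``missing value'' coincidence $v=m_Z:=\dickson(W)/\dickson(Z)^3$. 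This reads $\dickson(Z)^3=\dickson(W)/v$, which has at most three solutions in $\F_{2^n}^\times$ (the cubic $x^3-c$ has at most three roots); since $Z\mapsto\dickson(Z)$ is injective on the lines of $W$ (Theorem~\ref{thm:allthat}\eqref{thm:allthat:2}), at most three of the seven missing values coincide with $v$. Thus $v$ is attained at least $7-3=4\geq 2$ times and is a multiple value.

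The heart of the argument is the identity $\dickson(Z)^{-2}Z^\opp=(Z^2)^\perp$; once it is in place, Part~\eqref{thm:cspace:geom} is duality bookkeeping and Part~\eqref{thm:cspace:w2perp} rests on the elementary cubic-root bound. A minor convention worth flagging: the element $0$ belongs to the subspace $(W^2)^\perp$ but never to the image of $\sickson_W$, which is a harmless detail in interpreting the statement ``the set of multiple values is the subspace $(W^2)^\perp$.''
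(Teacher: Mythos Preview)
Your proof is correct and essentially the same as the paper's: both hinge on the identity $\dickson(Z)^{-2}Z^\opp=(Z^2)^\perp$, derive Part~(i) by intersecting over all $Z\subset W$, and settle Part~(ii) by the cubic bound on the number of lines $Z$ with $\dickson(Z)^3=\dickson(W)/v$. The only cosmetic difference is that your primary derivation of the key identity (via $Z^\adj=\dickson(Z)^{-1}Z$ and Lemma~\ref{lma:adjfroboppperp}) is the paper's stated alternative, while your alternative (via Lemma~\ref{lma:innpro}) is the paper's main route.
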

Before proving the theorem we note the following consequence
of Part~\eqref{thm:cspace:geom}:
$y\in\F_{2^n}^\times$ is not in the image of $\sickson_W$ iff $y$ is a
missing point of $\sickson_W$ and $y\notin(W^2)^\perp$. This shows
$\#\image(\sickson_W)=2^n-1-(7-\mu)$, where $\mu$ denotes the number
of missing points contained in $(W^2)^\perp$. Since $\mu\leq 7$, $\sickson_W$ is
``almost'' surjective for large $n$.
\begin{proof}
  \eqref{thm:cspace:geom}
  For $y\in\F_{2^n}^\times$ consider the seven equations
  \begin{equation}
    \label{eq:y}
    \frac{\spol_Z(x)}{\dickson(Z)^2}=y,\quad\text{$Z$ a line in $W$}.
  \end{equation}

  For any particular $Z$, \eqref{eq:y} is solvable iff
  $y\in\dickson(Z)^{-2}Z^\opp$. On the other hand, we will show
  that \eqref{eq:y} is solvable iff $y\in(Z^2)^\perp$, thereby
  establishing $\dickson(Z)^{-2}Z^\opp=(Z^2)^\perp$. Since
  $\bigcap\{(Z^2)^\perp;Z\subset W\text{ a line}\}=(W^2)^\perp$ and
  $\dim(W^2)^\perp=n-3$,
  \eqref{thm:cspace:geom} then follows.

  Using Lemma~\ref{lma:innpro}, \eqref{eq:y}
  implies $\trace(yz^2)=\innpro{x}{z}_Z=\innpro{z}{x}_Z=0$ for all
  $z\in Z$ and thus $y\in(Z^2)^\perp$. Conversely, suppose
  $y\in(Z^2)^\perp$ and $c\in\F_{2^n}$ is such that
  $\trace\left(\frac{\spol_Z(x)c^2}{\dickson(Z)^2}\right)=\innpro{x}{c}_Z=0$
  for all $x\in\F_{2^n}$. Then $c\in Z$, the radical of $\innpro{\ }{\
  }_Z$, and hence $\trace(yc^2)=0$. The non-degeneracy of the trace bilinear
  form now implies that \eqref{eq:y} has a solution, completing the
  proof of \eqref{thm:cspace:geom}.

  Let us remark that the identity
  $\dickson(Z)^{-2}Z^\opp=(Z^2)^\perp$, which also shows that
  $Z\mapsto\dickson(Z)^{-2}Z^\opp$ defines a correlation from $\PG(W)$
  to $\PG\bigl(\F_{2^n}/(W^2)^\perp\bigr)$,
  can alternatively be derived from $(Z^\adj)^2=(Z^\opp)^\perp$ (true in
  general, cf.\ Lemma~\ref{lma:adjfroboppperp}) and
  $Z^\adj=\dickson(Z)^{-1}Z$ for any line $Z$ (a speciality in
  dimension $2$).

  \eqref{thm:cspace:w2perp} By \eqref{thm:cspace:geom},
  if $y\in(W^2)^\perp$ then \eqref{eq:y}
  has $7$ solutions (one for each $Z$). Since $Z\mapsto\dickson(Z)$ is
  one-to-one, at most $3$ solutions can be in $W$, i.e., correspond to
  $\dickson(W)/\dickson(Z)^3=y$. Hence $y$ is a value of multiplicity
  $\geq 4$ in this case. On the other hand, if $y\notin(W^2)^\perp$
  then \eqref{eq:y} has a solution for one particular $Z$, which may
  be in $W$, and hence $y$ has multiplicity $0$ or $1$.
\end{proof}

\begin{definition}
  \label{dfn:cspace}
  The space $C=(W^2)^\perp\subset\F_{2^n}$ is called the \emph{collision
    space} (of the RRP) relative to $W$.
  The matrix $\mat{C}_W=(c_{ij})$ whose rows are labelled with the
  solids of $\PG(\F_{2^n})$ containing $W$, whose columns are
  labelled with the elements of $(W^2)^\perp$ (relative to some
  orderings of these sets), and whose $(i,j)$ entry $c_{ij}$ is
  defined as the number of planes $E\neq W$ in the solid $T_i$
  satisfying $\sickson_W(E)=y_j$, is called a \emph{collision matrix}
  relative to $W$.\footnote{In what follows, we will often say ``the
    collision matrix $\mat{C}_W$. This is slightly inaccurate but
    forgivable in our case, since collision matrices for the same $W$
    differ only by row and column permutations and all properties
    discussed will be invariant under these.}
\end{definition}

Since both $\F_{2^n}/W$ and $(W^2)^\perp$ have dimension $n-3$,
$\mat{C}_W$ is a square matrix of order $2^{n-3}-1=2^{v-6}-1$. From
the preceding development it should be clear that $\mat{C}_W$ contains
the necessary information to determine the maximum net gain for all
subsets $\mathcal{A}\subset\gabidulin_W$ satisfying the conditions of
Lemma~\ref{lma:local} and thus essentially solve the RRP. This is made
explicit in our next theorem. The actual solution also requires
selecting planes $E$ with $\sickson_W(E)=y$ in the solids $T$
corresponding to $\mathcal{A}$ and finding the corresponding new
planes in $\mathcal{N}_1$, but this is a straightforward computational
task and will not be discussed further.

\begin{theorem}
  \label{thm:copt}
  Let $W$ be a plane in $\PG(\F_{2^n})$, $m=2^{n-3}-1$ the order of the
  corresponding collision matrix $\mat{C}_W$ and $r_i$ the $i$-th row
  sum of $\mat{C}_W$ ($1\leq i\leq m$).
  The maximum local net gain achievable in the RRP specialized to $W$ is
  the solution of the following combinatorial optimization problem:
  \begin{equation}
    \label{eq:copt}
    \begin{array}{rl}
      \text{Maximize}&\sum_{i=1}^m(6-r_i)x_i+\wham(\vek{x}\mat{C}_W)\\
      \text{subject to}&\vek{x}\in\{0,1\}^m
    \end{array}
  \end{equation}
\end{theorem}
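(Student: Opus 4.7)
The plan is to translate the restricted RRP for fixed $W$ directly into the optimization problem~\eqref{eq:copt}, via Lemma~\ref{lma:local} and Corollary~\ref{cor:sickson}. By the discussion following Lemma~\ref{lma:local}, a rotation-invariant $\mathcal{A}$ satisfying the assumptions of that lemma, with $t$ nontrivial cosets of $\mathcal{T}$ in $\mathcal{A}\cap\mathcal{R}$, corresponds bijectively to a $t$-subset of the $m=2^{n-3}-1$ solids $T_i\supset W$; encode this by an indicator vector $\vek{x}\in\{0,1\}^m$ with $t=\sum_i x_i$. The cost contribution to the local net gain is $8t$.

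First I would identify the vertex set of the subgraph of $\graph_W$ associated with $\mathcal{A}$ as the disjoint union $\bigcup_{i\colon x_i=1}(T_i\setminus\{W\})$; disjointness holds because any two distinct $4$-subspaces containing the $3$-subspace $W$ intersect only in $W$. By Corollary~\ref{cor:sickson}, the independence number $M_1$ of this subgraph equals the number of distinct values assumed by $\sickson_W$ on this vertex set.

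Next I would split each $14$-set $T_i\setminus\{W\}$ into the $r_i$ planes $E$ with $\sickson_W(E)\in(W^2)^\perp$ and the $14-r_i$ planes $E$ with $\sickson_W(E)\notin(W^2)^\perp$. Theorem~\ref{thm:cspace}\eqref{thm:cspace:w2perp} tells us that values outside $(W^2)^\perp$ have multiplicity at most $1$ under $\sickson_W$, so the $14-r_i$ ``non-collision'' planes across all selected solids contribute $\sum_i(14-r_i)x_i$ pairwise distinct values, none of which can coincide with a value lying in $(W^2)^\perp$. For the collision-space contribution, the $j$-th entry of $\vek{x}\mat{C}_W$ equals $\sum_i x_i c_{ij}$, which counts the planes in the selected vertex set with $\sickson_W(E)=y_j$; hence the number of distinct collision-space values attained is exactly $\wham(\vek{x}\mat{C}_W)$. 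Summing the two contributions,
$$
  M_1=\sum_{i=1}^m(14-r_i)x_i+\wham(\vek{x}\mat{C}_W),
$$
and subtracting the cost $8\sum_i x_i$ produces the objective in~\eqref{eq:copt}. Maximizing over $\vek{x}\in\{0,1\}^m$ then gives the claimed equivalence with the maximum local net gain.

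No genuine obstacle arises: the content is fully packaged in Theorem~\ref{thm:cspace}\eqref{thm:cspace:w2perp} and Corollary~\ref{cor:sickson}. The only point that demands care is verifying that the ``non-collision'' values do not secretly collide with anything, neither among themselves nor with collision-space values. Both follow directly from the multiplicity-at-most-$1$ statement of Theorem~\ref{thm:cspace}\eqref{thm:cspace:w2perp}, so the argument is essentially a combinatorial bookkeeping exercise on top of the structural results already established.
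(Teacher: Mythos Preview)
Your proof is correct and follows essentially the same approach as the paper: encode the choice of solids by an indicator vector, use Corollary~\ref{cor:sickson} to identify $M_1$ with the number of distinct $\sickson_W$-values, split this count into the non-collision contribution $\sum_i(14-r_i)x_i$ and the collision-space contribution $\wham(\vek{x}\mat{C}_W)$ via Theorem~\ref{thm:cspace}\eqref{thm:cspace:w2perp}, and subtract $8t$. The paper's argument is slightly terser but otherwise identical.
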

As a consequence of this theorem and Corollary~\ref{cor:G}, the maximum
local net gain achievable in the general RRP (which depends only on
$n=v-3$) is equal to the largest optimal solution of the family of
optimization problems \eqref{eq:copt}, with $W$ running through a system
of representatives for the $G$-orbits on planes in $\PG(\F_{2^n})$.
\begin{proof}[Proof of Theorem~\ref{thm:copt}]
  An admissible selection of $\mathcal{A}\subset\gabidulin_W$
  corresponds to a set $T_i$, $i\in I\subseteq\{1,\dots,m\}$, of solids
  containing $W$ (the indexing is the same as in the definition of
  $\mat{C}_W$) and hence to a unique vector $\vek{x}\in\{0,1\}^m$ (the
  characteristic vector of $I$). Let $\mat{C}_W(I)$ be the
  submatrix of $\mat{C}_W$ with rows indexed by $i\in I$, and $t=\#I$.

  The number of planes
  $E$ involved in the rearrangement (equal to $\#\mathcal{N}_1$) is
  $14t$, of which $\sum_{i\in I,1\leq j\leq m}c_{ij}$ have
  $\sickson_W(E)\in(W^2)^\perp$. The
  corresponding local net gain is obtained by selecting from the $14t$
  planes all those which have $\sickson_W(E)\notin(W^2)^\perp$,
  and for each value $y\in(W^2)^\perp$ that corresponds to a nonzero
  column of $\mat{C}_W(I)$ one further plane with
  $\sickson(E)=y$. Hence the local net gain equals
  \begin{equation*}
    14t-\sum_{i\in I,1\leq j\leq m}c_{ij}+\wham(\vek{x}\mat{C}_W)-8t
    =\sum_{i\in I}(6-r_i)+\wham(\vek{x}\mat{C}_W),
  \end{equation*}
  as claimed.
\end{proof}
Before deriving further properties of collision matrices, we
illustrate the newly developed concepts with examples, including a
solution of the RRP in the case $v=9$.

\begin{example}[$v=7$, continuation of Example~\ref{ex:v=7}]
  \label{ex:v=7cont}
  This case is now rather trivial. Recalling that $W\subset\F_{16}$ has been
  chosen as the trace-zero subspace, we only observe the following: The
  collision space in this case is $(W^2)^\perp=W^\perp=\F_2$; i.e.,
  $\sickson(E)=1$ is the only multiple value of the
  $\sickson$-invariant and at the same time a missing point of
  multiplicity $3$. The corresponding collision matrix is the
  $1\times 1$ matrix $\mat{C}_W=(4)$, and \eqref{eq:copt} 
  has objective value $3$ (attained at $x=1$). The $4$ missing points
  outside the collision space are the primitive 5th roots of unity in
  $\F_{16}$.
\end{example}

\begin{example}[$v=8$, continuation of Example~\ref{ex:v=8}]
  \label{ex:v=8cont}
  In the case $v=8$, $W=\{1,\alpha,\alpha^2\}$, the collision space
  is the line $\{\alpha^{23},\alpha^{25},\alpha^{28}\}$. The points of
  the trace zero subspace of $\F_{32}$ are $\alpha^i$,
  $i\in\{0, 1, 2, 4, 7, 8, 14, 15, 16, 19, 23, 25, 27, 28, 29, 30\}$,
  and $\trace(\alpha^{2s+t})=0$ for $s=0,1,2$ and $t=23,25,28$. The
  collision matrix in this case is
  \begin{equation*}
    \mat{C}_W=
    \begin{pmatrix}
      2&2&2\\
      2&2&2\\
      2&2&2
    \end{pmatrix},
  \end{equation*}
  and \eqref{eq:copt} reduces to a trivial
  optimization problem with maximum objective value $3$ attained at all
  nonzero vectors $\vek{x}\in\{0,1\}^3$.

  Further, using $1+\alpha^2=\alpha^5$, $1+\alpha=\alpha^{18}$,
  $\alpha+\alpha^2=\alpha^{19}$, $1+\alpha+\alpha^2=\alpha^{11}$, we
  have
  $W=\{1,\alpha,\alpha^2,\alpha^5,\alpha^{11},\alpha^{18},\alpha^{19}\}$,
  $\dickson(W)=\alpha^{25}$; the lines in $W$ are
  $L_1=\{1,\alpha,\alpha^{18}\}$,
  $L_2=\{1,\alpha^2,\alpha^5\}$,
  $L_3=\{1,\alpha^{11},\alpha^{19}\}$,
  $L_4=\{\alpha,\alpha^2,\alpha^{19}\}$,
  $L_5=\{\alpha,\alpha^5,\alpha^{11}\}$,
  $L_6=\{\alpha^{18},\alpha^2,\alpha^{11}\}$,
  $L_7=\{\alpha^{18},\alpha^5,\alpha^{19}\}$,
  and $W'=\{\dickson(Z);Z\subset
  W\}=\{1,\alpha^7,\alpha^{11},\alpha^{17},\alpha^{19},\alpha^{22},\alpha^{30}\}$,
  Hence the missing points are $\{\dickson(W)/\dickson(Z)^3;Z\subset
  W\}
  =\{\alpha^4,\alpha^5,\alpha^{21},\alpha^{23},\alpha^{25},
  \alpha^{28},\alpha^{30}\}$. Note that all points on the collision line are
  missing points of multiplicity $1$. We will see later (cf.\
  Theorem~\ref{thm:C}\eqref{thm:C:types}) that this fact
  is responsible for the three $2$'s in each column of $\mat{C}_W$.
\end{example}

\begin{example}[$v=9$]
  \label{ex:v=9}
  Here $n=6$ and the corresponding extension field is
  $\F_{64}=\F_2[\alpha]$ with $\alpha^6+\alpha^4+\alpha^3+\alpha+1=0$.
  The $\gauss{6}{3}{2}=1395$ planes in $\PG(\F_{64})$ fall into $7$
  $G$-orbits with representatives $W_1=\langle
  1,\alpha,\alpha^2\rangle$, $W_2=\langle
  1,\alpha,\alpha^3\rangle$, $W_3=\langle
  1,\alpha,\alpha^4\rangle$, $W_4=\langle
  1,\alpha,\alpha^5\rangle$, $W_5=\langle
  1,\alpha,\alpha^{22}\rangle$, $W_6=\langle
  1,\alpha^3,\alpha^{18}\rangle$, $W_7=\langle
  1,\alpha^9,\alpha^{18}\rangle$ and orbit lengths $189$, $378$,
  $126$, $378$, $189$, $126$, $9$, respectively. The corresponding
  collision matrices are, in order,
\begin{align*}
  &\setlength{\arraycolsep}{1.5pt}
  \renewcommand{\arraystretch}{0.75}
  \left(\begin{array}{rrrrrrr}
1 & 1 & 2 & 1 & 2 & 0 & 1 \\
1 & 1 & 0 & 1 & 0 & 0 & 1 \\
1 & 1 & 0 & 1 & 0 & 0 & 1 \\
1 & 1 & 0 & 1 & 0 & 0 & 1 \\
1 & 1 & 0 & 1 & 0 & 0 & 1 \\
1 & 1 & 2 & 1 & 2 & 0 & 1 \\
1 & 1 & 2 & 1 & 2 & 4 & 1
\end{array}\right),\;
\left(\begin{array}{rrrrrrr}
0 & 0 & 1 & 1 & 1 & 1 & 2 \\
2 & 2 & 1 & 1 & 1 & 1 & 0 \\
0 & 0 & 1 & 1 & 1 & 1 & 0 \\
2 & 0 & 1 & 1 & 1 & 1 & 0 \\
0 & 2 & 1 & 1 & 1 & 1 & 0 \\
2 & 0 & 1 & 1 & 1 & 1 & 2 \\
0 & 2 & 1 & 1 & 1 & 1 & 2
\end{array}\right),\;
\left(\begin{array}{rrrrrrr}
2 & 1 & 1 & 1 & 1 & 1 & 1 \\
0 & 1 & 1 & 1 & 1 & 1 & 1 \\
0 & 1 & 1 & 1 & 1 & 1 & 1 \\
2 & 1 & 1 & 1 & 1 & 1 & 1 \\
2 & 1 & 1 & 1 & 1 & 1 & 1 \\
0 & 1 & 1 & 1 & 1 & 1 & 1 \\
0 & 1 & 1 & 1 & 1 & 1 & 1
\end{array}\right),\;
\left(\begin{array}{rrrrrrr}
1 & 2 & 1 & 1 & 0 & 1 & 1 \\
1 & 0 & 1 & 1 & 2 & 1 & 1 \\
1 & 0 & 1 & 1 & 0 & 1 & 1 \\
1 & 0 & 1 & 1 & 2 & 1 & 1 \\
1 & 0 & 1 & 1 & 0 & 1 & 1 \\
1 & 2 & 1 & 1 & 2 & 1 & 1 \\
1 & 2 & 1 & 1 & 0 & 1 & 1
\end{array}\right),\\
&\setlength{\arraycolsep}{1.5pt}
\renewcommand{\arraystretch}{0.75}
\left(\begin{array}{rrrrrrr}
2 & 0 & 0 & 1 & 2 & 1 & 0 \\
2 & 0 & 2 & 1 & 0 & 1 & 2 \\
2 & 2 & 0 & 1 & 0 & 1 & 0 \\
0 & 0 & 0 & 1 & 0 & 1 & 2 \\
0 & 2 & 2 & 1 & 0 & 1 & 0 \\
0 & 2 & 0 & 1 & 2 & 1 & 2 \\
0 & 0 & 2 & 1 & 2 & 1 & 0
\end{array}\right),\;
\left(\begin{array}{rrrrrrr}
1 & 1 & 0 & 0 & 1 & 0 & 0 \\
1 & 1 & 2 & 2 & 1 & 0 & 2 \\
1 & 1 & 0 & 0 & 1 & 0 & 0 \\
1 & 1 & 2 & 2 & 1 & 0 & 2 \\
1 & 1 & 0 & 0 & 1 & 0 & 0 \\
1 & 1 & 2 & 2 & 1 & 0 & 2 \\
1 & 1 & 0 & 0 & 1 & 4 & 0
\end{array}\right),\;
\left(\begin{array}{rrrrrrr}
0 & 2 & 0 & 2 & 0 & 2 & 0 \\
2 & 0 & 0 & 2 & 2 & 0 & 0 \\
0 & 0 & 2 & 2 & 0 & 0 & 2 \\
2 & 0 & 0 & 0 & 0 & 2 & 2 \\
0 & 0 & 2 & 0 & 2 & 2 & 0 \\
2 & 2 & 2 & 0 & 0 & 0 & 0 \\
0 & 2 & 0 & 0 & 2 & 0 & 2
\end{array}\right).
  \end{align*}
  The computations were done with SageMath.

  From this point onward it is fairly easy to solve the RRP by hand.
  A closer look at \eqref{eq:copt} reveals that rows $i$ with $r_i<6$
  must be part of any optimal solution (since they strictly increase
  the net gain) and those with $r_i=6$ can be included w.l.o.g.\ in any
  optimal solution (since they cannot decrease the
  net gain). Moreover, since $\wham(\vek{x}\mat{C}_W)\leq m$, the optimal local
  net gain is upper bounded by $\sum_{i;r_i<6}(6-r_i)+m$.

  These observations give that the 1st and 6th collision matrix have a
  maximum local net gain $\geq 12$ (corresponding to $I=\{2,3,4,5\}$
  and $I=\{1,3,5\}$, respectively) and allows us to discard the other
  $5$ collision matrices, whose optimal values are bounded by $9$,
  $7$, $9$, $9$, and $7$ (in that order). Then it is easy to complete
  the solution: The overall maximum local net gain is $12$, and is attained
  precisely for the following $\mat{C}_W$ and $I$: $I=\{2,3,4,5\}$,
  $\{1,2,3,4,5\}$, $\{2,3,4,5,6\}$ for the 1st collision matrix
  (corresponding to $W=\langle 1,\alpha,\alpha^2\rangle$) and $I=\{1,3,5\}$,
  $\{1,3,5,7\}$, $\{1,2,3,5\}$, $\{1,3,4,5\}$, $\{1,3,5,6\}$,
  $\{1,2,3,5,7\}$, $\{1,3,4,5,7\}$, $\{1,3,5,6,7\}$, for the 6th
  collision matrix
  (corresponding to $W=\langle 1,\alpha^3,\alpha^{18}\rangle$).

  Thus the largest subspace codes that are obtained as solutions of
  the RRP for $v=9$ have size $2^{12}+12\cdot 63=4852$ and exceed the
  LMRD code bound $2^{12}+\gauss{6}{3}{2}=4747$.

  Finally note that the collision
  matrices with optimal value $12$ are exactly those which have an entry
  $c_{ij}=4$.
\end{example}
Now we examine the collision matrices $\mat{C}_W$ in more detail.
Clearly $\mat{C}_W$ is nonnegative and integer-valued, and it appears
from the preceding examples that the only values which occur in
$\mat{C}_W$ are $0,1,2,4$ and the distribution of these values in
every column is restricted to a few different types. In our next
theorem we will prove this and several other properties of
$\mat{C}_W$, which facilitate the solution of the optimization problem
\eqref{eq:copt}. In the statement of the theorem we use the \emph{type} (or
\emph{spectrum}) of a row or column, which refers to the multiset of
its entries, with $0$ omitted. Thus, e.g., $1^42^2$ refers to a row or
column of $\mat{C}_W$ containing four $1$'s, two $2$'s and
$2^{n-3}-1-6$ zeros.

\begin{theorem}
  \label{thm:C}
  Let $W$ be a plane in $\PG(\F_{2^n})$ and $\mat{C}_W\in\Z^{m\times
    m}$, $m=2^{n-3}-1$, the associated collision matrix.
  \begin{enumerate}[(i)]
  \item\label{thm:C:types} The columns of $\mat{C}_W$ have type $1^7$, $2^3$, or
    $4^1$. More precisely, a column labeled with $y\in(W^2)^\perp$ has
    type $1^7$ if $y$ is not a missing value of $\sickson_W$ (i.e.,
    $y\neq\dickson(W)/\dickson(Z)^3$ for all lines $Z\subset W$), type
    $2^3$ if $y$ is a missing value of multiplicity $1$ (i.e.,
    $y=\dickson(W)/\dickson(Z)^3$ for exactly one line $Z\subset W$),
    and type $4^1$ if $y$ is a missing value of multiplicity $3$
    (i.e., $y=\dickson(W)/\dickson(Z)^3$ for three lines $Z\subset
    W$). Moreover, Type~$4^1$ does not occur if $n$ is odd, and occurs
    at most once as a column of $\mat{C}_W$ if $n$ is even.
  \item\label{thm:C:subspaces} The support of each column forms a
    subspace of $\F_{2^n}/W$ (a plane if the type is $1^7$, a line if
    the type is $2^3$ and, trivially, a point if the type is $4^1$).
  \item\label{thm:C:parity} All rows of $\mat{C}_W$ have the same
    parity, equal to the parity of the number of columns of type
    $1^7$.\footnote{This property may seem trivial from the shape of
      the collision matrices in Example~\ref{ex:v=9}, but for $v>9$
      there are no all-one columns and hence this property is no
      longer obvious.}
  \end{enumerate}
\end{theorem}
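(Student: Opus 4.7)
My plan is to unify parts (i) and (ii) via a single linearization, and to handle (iii) by a direct parity computation. Fix a column label $y\in(W^2)^\perp\setminus\{0\}$. As shown in the proof of Theorem~\ref{thm:cspace}, for each line $Z\subset W$ there is a unique plane $E_Z=\langle Z,x_Z\rangle$ with $\sickson_W(E_Z)=y$, where $\spol_Z(x_Z)=y\dickson(Z)^2$; moreover $E_Z=W$ exactly when $y$ equals the missing value $y_Z:=\dickson(W)/\dickson(Z)^3$. Hence the $y$-column has total sum $7-\mu(y)$ with $\mu(y):=\#\{Z:y_Z=y\}$. The crux is to track the solid $E_Z+W$ via its image $\spol_W(x_Z)\in W^\opp$: substituting $\spol_Z(x_Z)=y\dickson(Z)^2$ into identity~\eqref{eq:ilinfact} with $U=W$, $U'=Z$ yields
\begin{equation*}
  \spol_W(x_Z)=y^2\dickson(Z)^4+y\dickson(W)\dickson(Z)=\psi_y\bigl(\dickson(Z)\bigr),
\end{equation*}
where $\psi_y(\ell):=y^2\ell^4+y\dickson(W)\ell$ is a $2$-polynomial and hence $\F_2$-linear in $\ell$. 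This linearization is the main conceptual step.

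By Theorem~\ref{thm:allthat}\eqref{thm:allthat:2} the values $\dickson(Z)$ fill out $D_W^\times$, the nonzero part of the Dickson plane $D_W:=\dickson(W)W^\adj$, and $\ell\in D_W^\times$ belongs to $\ker\psi_y$ iff $y_Z=y$ for the corresponding $Z$; consequently $|\ker(\psi_y|_{D_W})|=\mu(y)+1$. Being a subspace, this size is a power of $2$, while the equation $\ell^3=\dickson(W)/y$ has at most $3$ solutions in $\F_{2^n}^\times$, so $\mu(y)\in\{0,1,3\}$. By rank-nullity $\dim\image(\psi_y|_{D_W})=3-\log_2(\mu(y)+1)\in\{3,2,1\}$, and the isomorphism $\spol_W\colon\F_{2^n}/W\to W^\opp$ transports this image to a subspace (plane, line, or point) of $\PG(\F_{2^n}/W)$. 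This yields both the support description in (ii) and the entry value $\mu(y)+1\in\{1,2,4\}$ at each support solid, proving (i). For the remaining clause of (i), type $4^1$ requires $\mu(y)=3$, hence a complete $\F_4^\times$-orbit $\{a,a\omega,a\omega^2\}$ contained in $D_W^\times$; this is impossible for odd $n$ since $\F_4\not\subseteq\F_{2^n}$, and two distinct such orbits would be disjoint cosets of $\F_4^\times$ while two distinct lines of the Fano plane $\PG(D_W)$ must share a point, so at most one such orbit fits.

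For (iii) fix a solid $T_i\supset W$ and pick any $x_0\in T_i\setminus W$. The $14$ planes of $T_i$ meeting $W$ in a line split into pairs $E_{Z,0}=\langle Z,x_0\rangle$, $E_{Z,1}=\langle Z,x_0+\beta_Z\rangle$ indexed by the seven lines $Z\subset W$ (with $\beta_Z\in W\setminus Z$), whose $\sickson_W$-values are $\zeta_Z$ and $\zeta_Z+y_Z$ for $\zeta_Z:=\spol_Z(x_0)/\dickson(Z)^2$. Since $y_Z=\spol_Z(\beta_Z)/\dickson(Z)^2$ lies in $(Z^2)^\perp$, Lemma~\ref{lma:innpro} shows $y_Z\in(W^2)^\perp$ iff $\trace(y_Z\beta_Z^2)=0$. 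When this holds the conditions $\zeta_Z,\zeta_Z+y_Z\in(W^2)^\perp$ coincide, contributing $0$ or $2$ to $r_i$; when it fails they are opposite, contributing exactly $1$. Hence $r_i\equiv\#\{Z:y_Z\notin(W^2)^\perp\}\pmod{2}$, a quantity independent of $i$. Writing $a$ and $b$ for the numbers of distinct missing values in $(W^2)^\perp$ of multiplicity $1$ and $3$ respectively, this parity equals $7-a-3b\equiv 1+a+b\pmod{2}$, which matches $m-a-b\pmod{2}$---the parity of the number of type $1^7$ columns---because $m=2^{n-3}-1$ is odd whenever $n\geq 4$. The main obstacle throughout is the linearization step: the $\F_2$-linearity of $\spol_W(x_Z)$ in $\dickson(Z)$ is not visible from the definition of $\sickson_W$, but once recognized, (i) and (ii) collapse to a single rank-nullity argument and (iii) reduces to a short bookkeeping with Lemma~\ref{lma:innpro}.
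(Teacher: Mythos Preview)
Your proof is correct and follows essentially the same route as the paper's: the key linearization $\spol_W(x_Z)=y^2\dickson(Z)^4+y\dickson(W)\dickson(Z)$ is exactly the map the paper calls $f$, and your rank--nullity argument for the column types and supports is precisely the ``homomorphism theorem'' step the paper invokes. Your treatment of (iii), computing the pair $\{\zeta_Z,\zeta_Z+y_Z\}$ and using that $(W^2)^\perp$ has index~$2$ in $(Z^2)^\perp$, is a slightly more explicit version of the paper's observation that the image of $\{E_1,E_2,W\}$ is a line through the missing point, but the content is the same.
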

\begin{proof}
  \eqref{thm:C:types} First we show that the multiplicities of $y$ as a
  missing point of $\sickson_W$ an their occurrences
  must be as indicated. The maximum
  multiplicity is $3$, since $\dickson(Z)^3=\dickson(W)/y$ can have at
  most $3$ solutions $Z$ (cf.\ Lemma~\ref{lma:dickson} and
  Theorem~\ref{thm:allthat}\eqref{thm:allthat:2}). If there are
  two different solutions $Z_1$, $Z_2$ then $\omega=\dickson(Z_2)/\dickson(Z_1)$
  must be a primitive 3rd root of unity in $\F_{2^n}$,
  which forces $n\equiv0\pmod{2}$. Moreover, denoting
  the third line in $W$ through the
  intersection point $Z_1\cap Z_2$ by $Z_3$, we then have
  \begin{equation*}
    \omega^2\dickson(Z_1)=\dickson(Z_1)+\dickson(Z_2)=\dickson(Z_3),
  \end{equation*}
  and $Z_3$ is a third solution. Since the line
  $\bigl\{\dickson(Z_1),\dickson(Z_2),\dickson(Z_3)\bigr\}
  =\F_4^\times\dickson(Z_1)$ is a member of the standard line spread of
  $\PG(\F_{2^n})$, the plane $W'=\bigl\{\dickson(Z);Z\subset W\bigr\}$
  cannot contain a further such line, showing that there is at most
  one missing value of multiplicity $3$.

  Next we set $\{Z;Z\subset W\}=\{Z_i;1\leq i\leq 7\}$ and let
  $E_i\supset Z_i$ be the corresponding plane satisfying
  $\dickson(E_i)/\dickson(Z_i)^3=y$ ($E_i=W$ is allowed here).
  Using the alternative expression for $\dickson(E_i)$ in terms of
  $\dickson(Z_i)$ and $\spol_{Z_i}(X)$, we can write these equations
  as $\spol_{Z_i}(x_i)/\dickson(Z_i)^2=y$, $x_i\in E_i\setminus
  Z_i$. Using \eqref{eq:ilinfact}, we obtain
  \begin{align*}
    \spol_W(x_i)&=\spol_{Z_i}(x_i)^2+\spol_{Z_i}(c_i)\spol_{Z_i}(x_i)\\
    &=\bigl(y\dickson(Z_i)^2\bigr)^2+\spol_{Z_i}(c_i)y\dickson(Z_i)^2\\
    &=y^2\dickson(Z_i)^4+y\dickson(W)\dickson(Z_i)
  \end{align*}
  with $c_i\in
  W\setminus
  Z_i$. This shows that
  $\spol_W(x_i)=f\bigl(\dickson(Z_i)\bigr)$
  is in the image of the plane $W'=\bigl\{\dickson(Z);Z\subset
  W\bigr\}=\dickson(W)W^\adj$ under the
  $\F_2$-linear
  transformation $f(x)=y^2x^4+y\dickson(W)x$.
  But $\kernel(f|_{W'})$
  consists of $0$
  and all elements $\dickson(Z_i)$
  satisfying $\dickson(W)/\dickson(Z_i)^3=y$,
  and hence has dimension $0$,
  $1$,
  or $2$.
  Applying the homomorphism theorem for linear maps, the remaining
  assertions of \eqref{thm:C:types} follow.\footnote{It should be
    noted that
    $\spol_W(x_i)=y^2\dickson(Z_i)^4+y\dickson(W)\dickson(Z_i)$
    is equivalent to $\spol_{Z_i}(x_i)/\dickson(Z_i)^2=y
    \vee\spol_{Z_i}(x_i+c_i)/\dickson(Z_i)^2=y$. Both planes $\langle
    Z_i,x_i\rangle$, $\langle
    Z_i,x_i+c_i\rangle$ are in the same solid $T=\langle
    W,x_i\rangle$ and only one of them can be a solution of the
    equation. The number of solutions in any ``point'' $T$
    of $\F_{2^n}/W$
    is thus invariant under the transformation just made.}

  \eqref{thm:C:subspaces} This has been already shown as part of the proof of
  \eqref{thm:C:types}.

  \eqref{thm:C:parity} If $(W^2)^\perp$ contains $\mu$ missing points
  of $\sickson_W$, the number of columns of $\mat{C}_W$ of type
  $1^7$ is equal to $2^{n-3}-1-\mu$.

  On the other hand, consider a
  solid $T_i\supset W$. For any line $Z\subset W$, there are $2$
  planes $E_1,E_2\subset T_i$ such that $E_1\cap W=E_2\cap W=Z$. The image of
  $\{E_1,E_2,W\}$ under $E\mapsto\dickson(E)/\dickson(Z)^3$ is a line in
  $\PG(\F_{2^n})$ through the missing point
  $\dickson(W)/\dickson(Z)^3$. Hence the restriction of $\sickson_W$ to the $14$
  planes $E\neq W$ in $T_i$ determines $7$ lines, one line through
  each missing point,\footnote{Again the missing points are counted
    with their multiplicity.} containing the $14$ values
  $\sickson_W(E)$. Since $(W^2)^\perp$ forms a hyperplane in the image
  $\dickson(Z)^{-2}Z^\opp$, these lines are either contained in
  $(W^2)^\perp$ or meet $(W^2)^\perp$ in a unique point. Hence $Z$
  contributes $0$ or $2$ to the row sum $r_i$ if
  $\dickson(W)/\dickson(Z)^3\in(W^2)^\perp$, and $1$ to $r_i$ if
  $\dickson(W)/\dickson(Z)^3\notin(W^2)^\perp$.\footnote{In the first
    case, the contribution is $0$ if the corresponding line meets
    $(W^2)^\perp$ in the missing point $\dickson(W)/\dickson(Z)^3$,
    and $2$ otherwise.}
  The parity of $r_i$ is thus equal to
  $7-\mu$. But $7-\mu\equiv2^{n-3}-1-\mu\pmod{2}$, and the proof is
  complete.
\end{proof}

As we have seen in Example~\ref{ex:v=9}, knowledge of the number of
rows of $\mat{C}_W$ with $r_i\leq 6$ provides important information
about the optimal solutions of \eqref{eq:copt} and, in particular, can
be used to bound the maximum net gain achievable when using $W$. In
view of its importance, we now state this result in the general case.
The \emph{row-sum spectrum} of $\mat{C}_W$ refers to the multiset of
row sums of $\mat{C}_W$ and is denoted by $0^{m_0}1^{m_1}2^{m_2}\dotsm$ if
there are $m_r$ rows with row sum $r$.
\begin{corollary}
  \label{cor:copt}
  Suppose $\mat{C}_W$ has row-sum spectrum $0^{m_0}1^{m_1}2^{m_2}\dotsm$,
  and the union of the supports of the $m_0+m_1+\dots+m_6$ rows of $\mat{C}_W$
    with $r_i\leq 6$ (equal to the number of nonzero columns of the
    corresponding submatrix $\mat{C}_W(I)$) is lower-bounded by $m'$.
    Then the optimal value $N_1$ of \eqref{eq:copt} (i.e., the maximum
  net gain achievable relative to $W$) satisfies the bounds
  \begin{equation*}
    \sum_{r=0}^5m_r(6-r)+m'\leq N_1\leq\sum_{r=0}^5m_r(6-r)+m,
  \end{equation*}
  where $m=2^{n-3}-1$ is the order of $\mat{C}_W$.
\end{corollary}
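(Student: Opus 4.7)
The plan is to prove the two inequalities separately by direct analysis of the objective function
\[
  f(\vek{x})=\sum_{i=1}^m(6-r_i)x_i+\wham(\vek{x}\mat{C}_W).
\]
The main ingredient that makes both bounds immediate is the observation that, since $\mat{C}_W$ has nonnegative integer entries, the support of the row vector $\vek{x}\mat{C}_W$ for any $\vek{x}\in\{0,1\}^m$ is precisely the union of the supports of those rows of $\mat{C}_W$ indexed by $i$ with $x_i=1$ (no cancellations can occur). Consequently $\wham(\vek{x}\mat{C}_W)$ equals the number of nonzero columns of the submatrix $\mat{C}_W(I)$ where $I=\{i;x_i=1\}$.

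For the upper bound I would split the linear part according to the row sums: rows with $r_i\leq 5$ contribute at most $6-r_i\geq 1$ when $x_i=1$, rows with $r_i=6$ contribute $0$, and rows with $r_i\geq 7$ contribute $(6-r_i)x_i\leq 0$. Hence for every admissible $\vek{x}$,
\[
  \sum_{i=1}^m(6-r_i)x_i\leq\sum_{i;\,r_i\leq 5}(6-r_i)=\sum_{r=0}^{5}m_r(6-r),
\]
with equality attainable by setting $x_i=1$ precisely for $r_i\leq 5$. Combined with the trivial estimate $\wham(\vek{x}\mat{C}_W)\leq m$ (the length of the row vector), this yields $N_1\leq\sum_{r=0}^{5}m_r(6-r)+m$.

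For the lower bound I would exhibit an explicit feasible $\vek{x}$, namely the characteristic vector of $I=\{i;r_i\leq 6\}$ (of cardinality $m_0+m_1+\dots+m_6$). With this choice, the linear part equals
\[
  \sum_{i\in I}(6-r_i)=\sum_{r=0}^{6}m_r(6-r)=\sum_{r=0}^{5}m_r(6-r),
\]
and, by the opening observation, $\wham(\vek{x}\mat{C}_W)$ equals the number of nonzero columns of $\mat{C}_W(I)$, which by hypothesis is $\geq m'$. Therefore $N_1\geq f(\vek{x})\geq\sum_{r=0}^{5}m_r(6-r)+m'$.

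There is essentially no hard step here: the corollary is really a bookkeeping consequence of Theorem~\ref{thm:copt} together with nonnegativity of the entries of $\mat{C}_W$. The only point that requires a moment's care is the choice to include the rows with $r_i=6$ in $I$ for the lower bound construction; these contribute nothing to the linear part but may enlarge the support of $\vek{x}\mat{C}_W$, which is why the hypothesis on $m'$ is phrased in terms of all rows with $r_i\leq 6$ rather than just those with $r_i\leq 5$.
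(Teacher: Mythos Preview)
Your proof is correct and follows exactly the approach the paper has in mind: the paper's own proof says only ``This is immediate from Theorem~\ref{thm:copt},'' but the underlying reasoning (rows with $r_i<6$ must be selected, rows with $r_i=6$ may be selected without loss, and $\wham(\vek{x}\mat{C}_W)\leq m$) is spelled out earlier in Example~\ref{ex:v=9} and matches your argument step for step. Your observation that nonnegativity of $\mat{C}_W$ prevents cancellations in $\vek{x}\mat{C}_W$, and your remark on why the $r_i=6$ rows are included in $I$, make explicit precisely what the paper leaves implicit.
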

\begin{proof}
  This is immediate from Theorem~\ref{thm:copt}.
\end{proof}
As we will see in a moment, the row-sum spectrum of $\mat{C}_W$
depends only on the geometric configuration of the (at most $7$) missing
points contained in $(W^2)^\perp$ and hence is quite
restricted. Finding a good lower bound $m'$ without actually computing
$\mat{C}_W$ seems to be more difficult. A reasonable approach to solve
this problem is to find a good upper bound $l$ for the column sums of
$\mat{C}_W(I)$ and use the obvious fact that the number of nonzero
columns of $\mat{C}_W(I)$ must be at least
$\left(\sum_{r=0}^6rm_r\right)/l$.\footnote{The obvious bound $l\leq
  7$ won't do the job, of course, since the row sums of $\mat{C}_W(I)$
  are $\leq 6$ and hence no constant $l\geq 6$ can improve on the
  trivial bound $m$.}

From Theorem~\ref{thm:cspace} we have that for every line $Z\subset W$
there exists a unique hyperplane $H_Z\supset Z$ which is mapped onto
$(W^2)^\perp$ by $x\mapsto\dickson\bigl(\langle Z,x\rangle\bigr)/\dickson(Z)^3
=\spol_Z(x)/\dickson(Z)^2$.

\begin{theorem}
  \label{thm:HZ}
  \begin{enumerate}[(i)]
  \item\label{thm:HZ:eq} The hyperplane $H_Z$ has equation
    $\trace\bigl(\frac{\dickson(W)}{\dickson(Z)^3}\cdot x^2\bigr)=0$
    and hence is essentially the dual of the corresponding missing
    point of $\sickson_W$ under the trace bilinear form;
  \item\label{thm:HZ:mp} $H_Z\supset W$ iff
    $\frac{\dickson(W)}{\dickson(Z)^3}\in(W^2)^\perp$ iff
    $\trace(a_1c^4/a_0^2)=0$, where $a_1=\dickson_1(Z)$,
    $a_0=\dickson(Z)$ and $c\in W\setminus Z$.
  \item\label{thm:HZ:enum}
    For any solid $T_i\supset W$ the number of planes $E\neq W$
    contained in $T_i$ and satisfying $\sickson_W(E)\in(W^2)^\perp$
    (i.e., the row sum $r_i$ of $\mat{C}_W$) is equal to $7-\mu+2\nu$,
    where $\mu$ denotes the number of missing points of $\sickson_W$
    contained in $(W^2)^\perp$ and $\nu$ the number of
    hyperplanes $H_Z$ that contain $T_i$.
  \end{enumerate}
\end{theorem}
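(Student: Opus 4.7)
The plan is to prove the three parts in order, each leveraging the previous. The central identity I will exploit throughout is the consequence of \eqref{eq:ilinfact} applied to $Z \subset W$ with $c \in W \setminus Z$: comparing coefficients of $X$ in $\spol_W(X) = \spol_Z(X)^2 + \spol_Z(c)\spol_Z(X)$ (the $X^2$-term squared contributes nothing to the $X$-coefficient) yields $\dickson(W) = \spol_Z(c)\dickson(Z)$, so
\begin{equation*}
  \frac{\dickson(W)}{\dickson(Z)^3} = \frac{\spol_Z(c)}{\dickson(Z)^2}.
\end{equation*}
This is exactly the bridge between ``missing-point values'' and the $\innpro{\cdot}{\cdot}_Z$ pairing of Lemma~\ref{lma:innpro}.

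For Part~\eqref{thm:HZ:eq}, I will describe $H_Z$ as the preimage of $(W^2)^\perp$ under the linear surjection $\phi_Z\colon x \mapsto \spol_Z(x)/\dickson(Z)^2$ from $\F_{2^n}$ onto $(Z^2)^\perp = \dickson(Z)^{-2}Z^\opp$ (an equality established in the proof of Theorem~\ref{thm:cspace}). Fixing any $c \in W\setminus Z$, we have $(W^2)^\perp = (Z^2)^\perp \cap \{c^2\}^\perp$, so $x \in H_Z$ iff $\trace\!\bigl(c^2\spol_Z(x)/\dickson(Z)^2\bigr) = 0$, i.e.\ iff $\innpro{x}{c}_Z = 0$. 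By symmetry of the form (Lemma~\ref{lma:innpro}), $\innpro{x}{c}_Z = \innpro{c}{x}_Z = \trace\!\bigl(\spol_Z(c)x^2/\dickson(Z)^2\bigr)$, and substituting the bridge identity gives the claimed equation for $H_Z$. The independence of the result from the choice of $c$ is then automatic.

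For Part~\eqref{thm:HZ:mp}, the first equivalence is immediate from Part~\eqref{thm:HZ:eq}: $W \subset H_Z$ says $\trace\!\bigl((\dickson(W)/\dickson(Z)^3)\cdot w^2\bigr) = 0$ for every $w \in W$, which is precisely the definition of $\dickson(W)/\dickson(Z)^3 \in (W^2)^\perp$. For the second equivalence, note that $\dickson(W)/\dickson(Z)^3$ already lies in $(Z^2)^\perp$, so membership in $(W^2)^\perp = (Z^2)^\perp \cap \{c^2\}^\perp$ reduces to the single condition $\trace\!\bigl(c^2\cdot\dickson(W)/\dickson(Z)^3\bigr) = 0$; using the bridge identity again this becomes $\innpro{c}{c}_Z = \trace(a_1c^4/a_0^2) = 0$ by Lemma~\ref{lma:innpro}.

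For Part~\eqref{thm:HZ:enum}, I will write $T_i = \langle W, u\rangle$ and, for each line $Z \subset W$, pick $c \in W\setminus Z$; the three planes of $T_i$ containing $Z$ are $W = \langle Z, c\rangle$, $E_1 = \langle Z, u\rangle$, $E_2 = \langle Z, u+c\rangle$. Their $\sickson_W$-values are $\spol_Z(c)/\dickson(Z)^2$, $\spol_Z(u)/\dickson(Z)^2$, and the sum of these, forming an $\F_2$-line $\ell_Z$ in $(Z^2)^\perp$ through the missing point $\dickson(W)/\dickson(Z)^3$. The contribution of $Z$ to $r_i$ is $\#\bigl((\ell_Z\setminus\{\text{missing point}\}) \cap (W^2)^\perp\bigr)$, which is $0$, $1$, or $2$. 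The hard part is to match this trichotomy cleanly to $\mu$ and $\nu$: if the missing point lies outside $(W^2)^\perp$ (equivalently $W \not\subset H_Z$ by (ii)), then $\ell_Z$ is not contained in $(W^2)^\perp$ and meets it in a single point, necessarily distinct from the missing point, so the contribution is $1$; there are $7-\mu$ such lines. If the missing point lies in $(W^2)^\perp$, then $\ell_Z \subseteq (W^2)^\perp$ iff additionally $u \in H_Z$, i.e.\ iff $T_i = \langle W, u\rangle \subseteq H_Z$; this contributes $2$, and the number of such $Z$ is exactly $\nu$. The remaining $\mu - \nu$ lines contribute $0$. Summing yields $r_i = (7-\mu) + 2\nu$, as claimed.
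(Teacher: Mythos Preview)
Your proof is correct and follows essentially the same route as the paper's: both arguments hinge on the bridge identity $\dickson(W)/\dickson(Z)^3=\spol_Z(c)/\dickson(Z)^2$, the symmetry of $\innpro{\ }{\ }_Z$ from Lemma~\ref{lma:innpro}, and for Part~\eqref{thm:HZ:enum} the trichotomy analysis of the line $\ell_Z$ through the missing point in $(Z^2)^\perp$ relative to the hyperplane $(W^2)^\perp$. Your write-up of \eqref{thm:HZ:enum} is a bit more self-contained than the paper's (which refers back to the proof of Theorem~\ref{thm:C}\eqref{thm:C:parity}), but the substance is identical.
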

Note that $H_Z$ can contain $T_i$ only if it contains $W$. Hence the
$\nu$ hyperplanes in \eqref{thm:HZ:enum} are among those $\mu$ with
their corresponding missing point in $(W^2)^\perp$, and we can restate
the formula $r_i=7-\mu+2\nu$ in the following way: A hyperplane $H_Z$
contributes $0$, $1$, or $2$ to the row sum $r_i$ if $H_Z\supset
W$ and $H_Z\nsupseteq T_i$, $H_Z\nsupseteq W$, or $H_Z\supset
T_i$, respectively. For conditions equivalent to $H_Z\supset W$ see
\eqref{thm:HZ:mp}.
\begin{proof}[Proof of Theorem~\ref{thm:HZ}]
  \eqref{thm:HZ:eq} $x\in H_Z$ is equivalent to
  \begin{equation*}
    \innpro{x}{y}_Z
    =\trace\left(\frac{\spol_Z(x)y^2}{\dickson(Z)^2}\right)=0\quad
    \text{for all $y\in W$}.
  \end{equation*}
  Since $\innpro{\ }{\ }_Z$ is symmetric and has radical $Z$, this is
  equivalent to $\innpro{x}{c}_Z=\innpro{c}{x}_Z=0$ for any $c\in
  W\setminus Z$, i.e.\ to
  $\trace\left(\frac{\spol_Z(c)x^2}{\dickson(Z)^2}\right)
  =\trace\left(\frac{\dickson(W)x^2}{\dickson(Z)^3}\right)=0$.

  \eqref{thm:HZ:mp}. As in \eqref{thm:HZ:eq}, $H_Z\supset W$ is
  equivalent to
  $\innpro{c}{c}_Z
  =\trace\left(\frac{\spol_Z(c)c^2}{\dickson(Z)^2}\right)=0$,
  which in turn is equivalent to $\innpro{c}{y}_Z=0$ for all $y\in W$
  and hence to
  $\frac{\dickson(W)}{\dickson(Z)^3}
  =\frac{\spol_Z(c)}{\dickson(Z)^2}\in(W^2)^\perp$.
  The second equivalence follows from
  $\innpro{c}{c}_Z=\trace(a_1c^4/a_0^2)$; cf.\ Lemma~\ref{lma:innpro}.

  \eqref{thm:HZ:enum} is proved using \eqref{thm:HZ:mp} and the
  reasoning in the proof of
  Theorem~\ref{thm:C}\eqref{thm:C:parity}. The case
  $\frac{\dickson(W)}{\dickson(Z)^3}\in (W^2)^\perp$ now splits into two
  subcases according to whether the image of
  $\{E_1,E_2,W\}$ is contained in $(W^2)^\perp$ or not. The
  first case is equivalent to $T_i\subseteq H_Z$ and accounts for
  $2$ values $\sickson(E_1)=\sickson(E_2)\in(W^2)^\perp$, the second
  case for $0$ values.
\end{proof}
Using Theorem~\ref{thm:HZ}, the row-sum spectrum of $\mat{C}_W$ can be
determined from the multiset $\mset{m}$ of missing points contained in
$(W^2)^\perp$ in the same way as the weight enumerator of a binary
linear $[\mu,k]$ code with associated multiset $\mset{m}$, represented
by the columns of a generator matrix of the code. For the latter it is
usually assumed that the multiset spans the geometry, which in our
case need not be true. However, it is easy to reduce the spectrum
computation to this case: Denoting by $M$ the hull of $\mset{m}$
(i.e., the subspace generated by the missing points in $(W^2)^\perp$),
we compute the associated weight distribution
$(A_i)_{0\leq i\leq\mu}$, replace nonzero weights $i$ by the
corresponding row sums $2(\mu-i)+7-\mu=\mu+7-2i$ and scale the
frequencies $A_i$ by $2^{n-3-\dim(M)}$.  If $M$ is a proper subspace
of $(W^2)^\perp$, there are in addition $2^{n-3-\dim(M)}$ rows of
$\mat{C}_W$ corresponding to the all-zero codeword. These correspond
to the solids $T_i$ contained in
$(M^{1/2})^\perp=\bigcap\{H_Z;H_Z\supset W\}$ and have maximum row sum
$r_i=7-\mu+2\mu=7+\mu$.\footnote{All other row sums are
  $\leq 7-\mu+2(\mu-1)=5+\mu$.} We will illustrate row-sum spectrum
computations later in the proofs of Theorems~\ref{thm:n=4mod8}
and~\ref{thm:n=0mod8}.

For all even $n$ (i.e., odd packet lengths $v$) explored so far, the
maximum net gain of the RRP is achieved only by planes $W$ whose
collision matrices have an entry $c_{ij}=4$. It is therefore of
interest, to characterize these planes. For the statement of the
following theorem, we denote the trace-zero hyperplane of
$\PG(\F_{2^n})$ by $H_2$.\footnote{Thus
  $H_2=\bigl\{x\in\F_{2^n};\trace_2(x)=0\bigr\}$, where
  $\trace_2(x)=\trace(x)=x+x^2+x^4+\dots+x^{2^{n-1}}$. The index used
  is thus equal to the order of base field of the corresponding
  field extension.}
\begin{theorem}
  \label{thm:cij=4}
  \begin{enumerate}[(i)]
  \item\label{thm:cij=4:gen}
    Suppose $n$ is even and $\omega$ is a generator of the subfield
    $\F_4\subset\F_{2^n}$. A plane $W$ in $\PG(\F_{2^n})$ gives rise to an
    entry $c_{ij}=4$ in the collision matrix $\mat{C}_W$ if and only if
    $W=rW_1$, $r\in\F_{2^n}^\times$, for some plane $W_1=\langle
    1,a,b\rangle$ with $a,b$ satisfying $b^2+b=\omega(a^2+a)$.
  \item\label{thm:cij=4:part} The planes $W_1$ of the type indicated in
    \eqref{thm:cij=4:gen} are contained
    in $H_2$, mutually intersect in the point $\F_2=\F_21$
    of $\PG(\F_{2^n})$, and determine a line spread of the
    ``sub-quotient'' geometry $\PG(H_2/\F_2)\cong\PG(n-3,\F_2)$. In
    particular, the number of such planes is $(2^{n-2}-1)/3$.
  \item\label{thm:cij=4:missing}
    For a plane $W_1$ of the type indicated in
    \eqref{thm:cij=4:gen}, the missing points of $\sickson_{W_1}$ are
    $1$ (of multiplicity $3$) and $(b+\omega a+x)^{-3}$ for $x\in\F_4$ (of
    multiplicity $1$).
\end{enumerate}
\end{theorem}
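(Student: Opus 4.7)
The plan for (i) is to translate "$c_{ij}=4$" into the algebraic equation. By Theorem~\ref{thm:C}\eqref{thm:C:types} and the argument in its proof, an entry $c_{ij}=4$ of $\mat{C}_W$ corresponds to a missing value $y$ of multiplicity $3$, witnessed by three concurrent lines $Z_1,Z_2,Z_3\subset W$ whose $\dickson$-values form a standard spread line $\F_4^\times\dickson(Z_1)$. Rescaling $W$ by $r^{-1}$, where $\F_2 r$ is the common point of the $Z_i$, yields $W_1=r^{-1}W\ni 1$. Choosing a basis $1,a,b$ of $W_1$ so that $L_1=\langle 1,a\rangle$ and $L_2=\langle 1,b\rangle$ realise $\dickson(L_2)=\omega\dickson(L_1)$, the identity $\dickson(\langle 1,c\rangle)=c+c^2$ from Remark~\ref{rmk:dickson} turns this relation into exactly $b+b^2=\omega(a+a^2)$. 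Reading the calculation backwards gives the converse: any $W_1=\langle 1,a,b\rangle$ satisfying the equation has three concurrent lines through $1$ with $\dickson$-values $d,\omega d,\omega^2 d$ (where $d=a+a^2$), which by Theorem~\ref{thm:C}\eqref{thm:C:types} is already the maximal multiplicity.

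The decisive step for (ii) is to apply the relative trace $T=\trace_{\F_{2^n}/\F_4}$ to $b^2+b=\omega(a^2+a)$. Since $n$ is even, $\F_4\subset\F_{2^n}$, and $T$ is $\F_4$-linear with $T(x^2)=T(x)^2$; hence the equation descends to $T(b)^2+T(b)=\omega\bigl(T(a)^2+T(a)\bigr)$ inside $\F_4$. The map $x\mapsto x^2+x$ sends $\F_4$ into $\F_2$, so the left side lies in $\F_2$ and the right in $\omega\F_2=\{0,\omega\}$; the two agree only when both vanish, forcing $T(a),T(b)\in\F_2$. Because $\trace(x)=\trace_{\F_4/\F_2}(T(x))$ annihilates $\F_2$, this gives $\trace(a)=\trace(b)=0$, and together with $\trace(1)=0$ it follows that $W_1\subset H_2$. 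Next, two distinct such planes $W_1,W_1'$ both contain $1$ and therefore meet in at least $\F_2\cdot 1$; if they shared a further line $L$, either $1\in L$ (ruled out because the bijection $Z\mapsto\dickson(Z)$ between lines through~$1$ and $H_2\setminus\{0\}$, provided by Theorem~\ref{thm:allthat}\eqref{thm:allthat:1} applied to $\F_2$, would force the spread lines of $W_1$ and $W_1'$ to coincide) or $1\notin L$, in which case $\langle 1,L\rangle$ is a plane contained in both, forcing $W_1=W_1'$. For the spread property, any line $L=\langle 1,a\rangle\subset H_2$ admits $b$ with $b^2+b=\omega(a^2+a)$ by Hilbert~90 (feasibility follows from $\trace(a)=0$), and the $\F_4$-trace identity just proved puts $b\in H_2$; hence $L$ lies in a plane $W_1$ of our type. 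Counting gives $(\lvert H_2\rvert-2)/(2\cdot 3)=(2^{n-2}-1)/3$ such planes.

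For (iii) write $d=a+a^2$ and $e=ab^2+a^2b$. The bilinear and alternating identities in the proof of Lemma~\ref{lma:dickson} show that the seven lines of $W_1$ have $\dickson$-values $d,\omega d,\omega^2 d$ (for the three lines through~$1$) and $e,d+e,\omega d+e,\omega^2 d+e$ (for the four avoiding~$1$). To compute $\dickson(W_1)$, invoke $\dickson(W_1)=\spol_{L_1}(b)\cdot\dickson(L_1)$ from Theorem~\ref{thm:allthat}\eqref{thm:allthat:1} and expand $\spol_{L_1}(b)=b^4+(1+a+a^2)b^2+db$; repeated substitution of $b^2=b+\omega d$ makes the coefficient of $b$ cancel identically (because $1+(1+a+a^2)+d=0$) and collapses the remainder to $(\omega+\omega^2)d^2=d^2$, so $\dickson(W_1)=d^3$. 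Thus the three lines through~$1$ all yield the missing value $d^3/(\omega^i d)^3=1$, producing the triple missing point. For the remaining four lines, the one-line consequence $e=d(b+\omega a)$ of $b^2+b=\omega(a^2+a)$ lets us rewrite the four $\dickson$-values uniformly as $d(b+\omega a+x)$ with $x\in\F_4=\{0,1,\omega,\omega^2\}$, and then $d^3/\dickson(L_i)^3=(b+\omega a+x)^{-3}$, exactly as claimed. The main obstacle throughout is the identification of the $\F_4$-trace in the middle paragraph; once that identity is recognised, the remaining statements reduce to straightforward linear algebra.
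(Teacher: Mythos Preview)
Your proof is correct, but there is one logical ordering issue in part~(i) and your route in parts~(ii) and~(iii) differs from the paper's.

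\textbf{The ordering issue.} For the converse in~(i) you show that $W_1=\langle 1,a,b\rangle$ with $b^2+b=\omega(a^2+a)$ has a missing value of multiplicity~$3$, and then invoke Theorem~\ref{thm:C}\eqref{thm:C:types}. But that theorem only describes columns indexed by $y\in(W^2)^\perp$; a multiplicity-$3$ missing value yields an entry $c_{ij}=4$ only once you know it lies in the collision space. You do establish this (the triple missing point is $1$, and your $\F_4$-trace argument gives $W_1\subset H_2$, hence $1\in(W_1^2)^\perp$), but only in part~(ii). The paper places this verification inside~(i), so its~(i) is self-contained; yours is not. Either move the trace argument forward, or note (as the paper's footnote does) that any multiplicity-$3$ missing point is automatically in $(W^2)^\perp$ by a short geometric argument.

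\textbf{Comparison of approaches.} In~(ii) the paper observes that $x\mapsto x^2+x$ induces an isomorphism $H_2/\F_2\to H_4$ carrying the planes $W_1$ onto the standard $\F_4$-line spread of $\PG(\F_{2^n}/\F_4)$; the spread property and the count then come for free. Your direct argument (disjointness via the bijection $Z\mapsto\dickson(Z)$ on lines through~$1$, covering via Hilbert~90) is equally valid and perhaps more elementary, though it requires checking by hand that the constructed $b$ lies outside $\langle 1,a\rangle$. In~(iii) the paper gets $\dickson(W_1)=d^3$ in one line from $\dickson(W_1)=\dickson(Z_1)\dickson(Z_2)\dickson(Z_3)=d\cdot\omega d\cdot\omega^2 d$ (since the common point of the $Z_i$ is~$1$); your computation via $\spol_{L_1}(b)\dickson(L_1)$ and repeated substitution of $b^2=b+\omega d$ reaches the same answer but is longer. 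Your identity $e=d(b+\omega a)$ for the four remaining lines is a nice uniform way to finish, matching the paper's computation.
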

\begin{proof}
  \eqref{thm:cij=4:gen}
  Since the indicated property is $G$-invariant, we may assume $1\in
  W$ and that the three lines $Z_1,Z_2,Z_3$ containing $1$ give rise
  to the missing point of multiplicity $3$, i.e.\
  $\dickson(Z_1)^3=\dickson(Z_2)^3=\dickson(Z_3)^3$.

  Now let $Z_1=\langle 1,a\rangle$, $Z_2=\langle 1,b\rangle$, and
  hence $Z_3=\langle 1,a+b\rangle$. Then
  $b^2+b=\dickson(Z_2)=\omega^i\dickson(Z_1)=\omega^i(a^2+a)$ for some
  $i\in\{1,2\}$, and by interchanging $a,b$, if necessary, we may
  assume $i=1$.

  Conversely, assume that $W=\langle 1,a,b\rangle$ with $a,b$ having
  the indicated property.
  Then, with $Z_i$ as in
  \eqref{thm:cij=4:gen}, we have $\dickson(Z_i)^3=(a^2+a)^3$,
  $\dickson(W)=\dickson(Z_1)\dickson(Z_2)\dickson(Z_3)/1^2
  =\omega^{0+1+2}(a^2+a)^3=(a^2+a)^3$, and hence the triple missing
  point is $\dickson(W)/\dickson(Z_i)^3=1$.
  Further, since
  $\trace_{\F_{2^n}/\F_4}(a^2+a)=\trace(a)$ and similarly for $b$, we
  must have $\trace(b)=\omega\trace(a)$ and hence
  $\trace(a)=\trace(b)=0$. This implies
  $1\in(W^2)^\perp$, and hence $y=1$ gives rise to a column of
  $\mat{C}_W$ of Type~$4^1$ by
  Theorem~\ref{thm:C}\eqref{thm:C:types}.\footnote{A more geometric
    proof of the fact that a missing point of multiplicity $3$ must be
    in $(W^2)^\perp$ is the following: Consider the $7$ lines
    determined by the restriction of $\sickson_W$ to a fixed solid
    $T_i\supset W$; cf.\ the proof of
    Theorem~\ref{thm:C}\eqref{thm:C:parity}. Since these lines are contained
    in the corresponding spaces $\dickson(Z)^{-2}Z^\opp$, they can
    only intersect in $(W^2)^\perp$. However, the $3$ lines containing
    the triple missing point intersect in this point, and hence this
    point must be in $(W^2)^\perp$.}

  \eqref{thm:cij=4:part} 
  In the proof of \eqref{thm:cij=4:gen} we have seen that such planes
  $W_1$ are contained in $H_2$. Since $\trace(a)=0$ is equivalent to
  $\trace_{\F_{2^n}/\F_4}(a^2+a)=0$, the map $\F_{2^n}\to\F_{2^n}$, $x\mapsto x^2+x$
  induces an isomorphism from $H_2/\F_2$ onto the trace-zero
  subspace $H_4$ of the field extension $\F_{2^n}/\F_4$. By
  Part~\eqref{thm:cij=4:gen}, it maps the set of planes $W_1$ of the
  indicated type onto the standard line spread in
  $\PG(\F_{2^n}/\F_4)$. The result follows.

  \eqref{thm:cij=4:missing}
  We know already that $1$ is the missing
  point of multiplicity $3$.
  Since the plane $\bigl\{\dickson(Z);Z\subset W_1\text{ a line}\bigr\}$ is
  generated by $\F_4(a^2+a)$ and $\dickson\bigl(\langle
  a,b\rangle\bigr)=ab^2+a^2b$, the remaining $4$ missing points are
  \begin{align*}
    \frac{(a^2+a)^3}{\bigl(ab^2+a^2b+x(a^2+a)\bigr)^3}
    &=\frac{(a^2+a)^3}{\bigl(ab+\omega(a^3+a^2)+x(a^2+a)\bigr)^3}
    =\frac{1}{(b+\omega a+x)^3}
  \end{align*}
  with $x\in\F_4$, as asserted.
\end{proof}
\begin{remark}
  \label{rmk:abomega}
  The map $\F_{2^n}\to\F_{2^n}$, $x\mapsto x^2+x$ induces also an
  $\F_2$-isomorphism from $\F_{2^n}/\F_4$ onto $H_2/\F_2$, hence gives rise
  to the chain $\F_{2^n}/\F_4\to H_2/\F_2\to H_4$ of
  $\F_2$-isomorphisms.\footnote{This property
    is reflected in the symbolic factorization
    $X^4+X=(X^2+X)\circ(X^2+X)$.}

  The points $\F_4(b+\omega a)$ with $a\in H_2\setminus\F_2$ and $b$
  determined as in Theorem~\ref{thm:cij=4}\eqref{thm:cij=4:gen} form
  a system of representatives for the nonzero cosets in
  $\F_{2^n}/\F_4$ and hence for the lines in
  $\PG(\F_{2^n}/\F_4)\cong\PG(n/2-1,\F_4)$ that pass through the point
  $\F_4=\F_41$. This can be seen as follows: Since
  \begin{align*}
    (b+\omega a)^2+b+\omega a&=b^2+b+\omega^2a^2+\omega a=a^2,\\
    (\omega b+\omega^2a)^2+\omega b+\omega^2a
                             &=\omega^2b^2+\omega b+\omega a^2+\omega^2a=b^2+a^2,
  \end{align*}
  the line $L=\F_4(b+\omega a)+\F_4$ is mapped to the plane
  $W_1^2=\langle 1,a^2,b^2\rangle$ by $x\mapsto x^2+x$, and the planes
  of this form partition $H_2/F_2$; cf.\
  Theorem~\ref{thm:cij=4}\eqref{thm:cij=4:part}. 

  Moreover, by Theorem~\ref{thm:cij=4}\eqref{thm:cij=4:missing} the missing
  points of $\sickson_{W_1}$ are just the reciprocal cubes of the $5$
  points on the line $L=\F_4(b+\omega a)+\F_4$.\footnote{Note that the
    cube $x^3$ of a point $\F_4x$ is well-defined.}

  These observations imply that each element $\neq 1$ of the index-$3$
  subgroup of $\F_{2^n}^\times$ forms a missing point for precisely
  one plane $W_1$ of the type indicated in
  Theorem~\ref{thm:cij=4}\eqref{thm:cij=4:gen}.  
\end{remark}
As an aside, making the link with Section~\ref{sec:allthat}, we
note that the plane polynomials of the planes
$W_1=\langle 1,a,b\rangle$ in Theorem~\ref{thm:cij=4} are
$\spol_{W_1}(X)=\bigl(X^4+(a^2+a)^3X)\bigr)\circ(X^2+X)
=X^8+X^4+(a^2+a)^3X^2+(a^2+a)^3X$.

\section{Computational Results}\label{sec:comp}

In this section we provide an account of explicit maximum net gain
computations for $10\leq v\leq 15$, which we have done using the
computer algebra package SageMath. The computations were exhaustive
for $v\leq 13$. In the case $v=13$ ($n=10$) there are $633$ $G$-orbits
to process. For each $G$-orbit representative $W$ we have computed the
collision matrix $\mat{C}_W$ (of size $127\times 127$ for $v=13$) and
the bounds for the maximum net gain $N_1$ relative to $W$ stated in
Corollary~\ref{cor:copt}. Then, in a second pass through the list of
$G$-orbit representatives, this time sorted in order of decreasing
lower bounds for $N_1$, we have computed the exact maximum net gains
$N_1$ for those $G$-orbits, for which the upper bound still exceeded
the current ``absolute'' maximum net gain (taken over all $G$-orbits
computed so far). The actual optimization routine used
some greedy heuristic for selecting rows of $\mat{C}_W$ with
row sums $>6$ as part of the next-to-be-tested feasible solution.

For $v\in\{14,15,16\}$ exhaustive computations were not feasible, and we
have restricted the search to those $G$-orbits, which contain a plane
$W_1$ of the type discussed in Theorem~\ref{thm:cij=4}, or a subset
thereof. In Section~\ref{sec:code} we will show that the absolute
maximum net gains obtained for $v\in\{14,15\}$ nevertheless represent
the true maximum as well.

The computational results are summarized in Table~\ref{tbl:netgain},
including the cases $7\leq v\leq 9$ already discussed. The table
contains for each length $v$ the number of $G$-orbits processed (for
$v\geq 13$ equal to the total number of $G$-orbits), the absolute
maximum local net gain $N_1$ computed (with the possible exception of $v=16$
equal to the true maximum), the local net gain equivalent of the LMRD
code bound (``LMRD threshold''), the size of the plane subspace codes
corresponding to optimal solutions of the RRP, and a representative
subspace $W$ giving rise to an (absolutely) optimal solution. The
generators of $W$ are given as powers of a primitive $\alpha$ of
$\F_{2^n}$ (root of the Conway polynomial of degree $n$, as used by
SageMath). The next few paragraphs contain supplementary remarks on
each case.

\paragraph{$v=7$} 
According to \cite{lt:0328,mt:alb80}, there exist solutions
$\mathcal{C}$ of the RRP that can be extended by $28$
further planes meeting $S=\{0\}\times\mathbb{F}_{16}$ in a line to
a currently best known $(7,329,4;3)_2$ code. However, no
rotation-invariant $(7,301,4;3)_2$ code $\mathcal{C}$ has this
property.\footnote{M.~Kiermaier, personal communication}

\paragraph{$v=8$} Using a modified 
beam-search algorithm \cite{braun-reichelt14}, we have found that 
one of the optimal $(8,1117,4;3)_2$ solutions $\mathcal{C}$ of the RRP
can be augmented by 142 extra planes meeting
$S=\{0\}\times\mathbb{F}_{32}$ in a line to a $(8,1259,4;3)_2$
code. This is considerably better than the LMRD code bound
$1024+155=1179$, but it falls short of the currently best known
code of size $1326$.

\paragraph{$v=9$} The seven $7\times 7$ collision matrices
corresponding to the seven $G$-orbits were already listed in
Example~\ref{ex:v=9}. Two $G$-orbits, with orbit representatives
$\langle1,\alpha,\alpha^2\rangle$ and
$\langle1,\alpha^3,\alpha^{18}\rangle$, yield the absolute maximum
local net gain $12$, resulting in $(9,4852,4;3)_2$ codes. We have found that
$162$ planes meeting $S=\{0\}\times\F_{64}$ can be added to one of the
codes, increasing the code size to $5014$. The currently best known
code has size $5986$ \cite{braun-ostergard-wassermann15}.

\paragraph{$v=10$} In this case all fifteen $15\times15$ collision
matrices were computed. The absolute maximum local net gain 20 is obtained from
the three $G$-orbits
with representatives $\langle1,\alpha,\alpha^{24}\rangle$,
$\langle1,\alpha,\alpha^{39}\rangle$ and
$\langle1,\alpha,\alpha^{22}\rangle$, resulting in $(10,18924,4;3)_2$
codes. The size of these codes is smaller than the LMRD code bound 
$2^{14}+\gauss{7}{2}{2}+19051$, but again a further extension step
by planes meeting $\{0\}\times\mathbb{F}_{128}$ in a line (in this
case 1593 codewords can be added to one of the codes) increases the
code size to $20517>19051$. The currently best known code has size
$23870$ \cite{braun-ostergard-wassermann15}.

\paragraph{$v=11$} Here we have $n=8$ and the collision matrices 
have already size $31\times 31$. Among the $53$ $G$-orbits, the
orbit containing 
$W=\langle1,\alpha^{17},\alpha^{34}\rangle
=\bigl\{x\in\F_{16};\trace(x)=0\bigr\}$ uniquely gives the absolute 
maximal local net gain 54, resulting in a subspace code of size
$2^{16}+54\cdot(2^8-1)=79306$. This is better than the LMRD code bound
$2^{16}+10795=76331$, but should also be compared to the size $97526$
of the currently best known code
\cite{braun-ostergard-wassermann15}. The collision matrix $\mat{C}_W$
is shown in Figure~\ref{fig:CWn=8}.
\begin{figure}
  \centering
    \(
    \setlength{\arraycolsep}{1.5pt}
  \renewcommand{\arraystretch}{0.75}
  \left(\begin{array}{ccccccccccccccccccccccccccccccc}
  0 & 0 & 0 & 1 & 1 & 0 & 0 & 2 & 1 & 0 & 0 & 1 & 1 & 0 & 0 & 0 & 0 & 0 & 0 & 0 & 0 & 1 & 1 & 0 & 0 & 0 & 0 & 0 & 0 & 1 & 0 \\
  0 & 0 & 0 & 0 & 0 & 1 & 1 & 0 & 0 & 1 & 1 & 0 & 0 & 0 & 0 & 0 & 0 & 0 & 0 & 0 & 1 & 0 & 0 & 1 & 1 & 0 & 0 & 0 & 0 & 1 & 2 \\
  0 & 2 & 1 & 0 & 0 & 0 & 0 & 0 & 0 & 0 & 0 & 0 & 0 & 0 & 0 & 1 & 1 & 1 & 1 & 0 & 0 & 0 & 0 & 0 & 0 & 0 & 0 & 1 & 1 & 1 & 0 \\
  0 & 0 & 0 & 0 & 0 & 0 & 0 & 0 & 0 & 0 & 0 & 0 & 0 & 0 & 0 & 0 & 1 & 0 & 1 & 0 & 0 & 0 & 0 & 0 & 0 & 0 & 0 & 0 & 0 & 0 & 0 \\
  0 & 0 & 0 & 0 & 0 & 0 & 0 & 0 & 0 & 0 & 0 & 0 & 0 & 1 & 0 & 0 & 0 & 0 & 0 & 0 & 0 & 0 & 0 & 0 & 0 & 1 & 0 & 0 & 0 & 0 & 0 \\
  0 & 0 & 0 & 0 & 1 & 0 & 0 & 0 & 0 & 0 & 0 & 0 & 1 & 0 & 0 & 0 & 0 & 0 & 0 & 0 & 0 & 0 & 0 & 0 & 0 & 0 & 0 & 0 & 0 & 0 & 0 \\
  0 & 0 & 0 & 0 & 1 & 0 & 0 & 0 & 0 & 0 & 0 & 0 & 1 & 0 & 1 & 0 & 1 & 0 & 1 & 0 & 0 & 0 & 0 & 0 & 0 & 0 & 1 & 0 & 0 & 0 & 0 \\
  0 & 0 & 0 & 0 & 0 & 1 & 1 & 0 & 1 & 0 & 0 & 0 & 0 & 0 & 0 & 0 & 0 & 0 & 0 & 0 & 0 & 1 & 1 & 1 & 1 & 0 & 0 & 0 & 0 & 1 & 2 \\
  4 & 0 & 1 & 0 & 0 & 0 & 0 & 0 & 1 & 1 & 1 & 0 & 0 & 0 & 0 & 0 & 0 & 0 & 0 & 0 & 1 & 1 & 1 & 0 & 0 & 0 & 0 & 1 & 1 & 1 & 0 \\
  0 & 0 & 1 & 0 & 0 & 1 & 1 & 0 & 0 & 0 & 0 & 0 & 0 & 0 & 0 & 0 & 0 & 0 & 0 & 0 & 0 & 0 & 0 & 1 & 1 & 0 & 0 & 1 & 1 & 1 & 2 \\
  0 & 0 & 0 & 0 & 0 & 0 & 0 & 0 & 0 & 1 & 1 & 0 & 0 & 1 & 1 & 0 & 0 & 0 & 0 & 2 & 1 & 0 & 0 & 0 & 0 & 1 & 1 & 0 & 0 & 1 & 0 \\
  0 & 0 & 0 & 0 & 0 & 0 & 0 & 0 & 0 & 0 & 0 & 0 & 0 & 0 & 1 & 0 & 0 & 0 & 0 & 0 & 0 & 0 & 0 & 0 & 0 & 0 & 1 & 0 & 0 & 0 & 0 \\
  0 & 0 & 0 & 0 & 0 & 0 & 0 & 0 & 0 & 0 & 0 & 0 & 0 & 0 & 0 & 1 & 0 & 1 & 0 & 0 & 0 & 0 & 0 & 0 & 0 & 0 & 0 & 0 & 0 & 0 & 0 \\
  0 & 0 & 0 & 1 & 0 & 0 & 0 & 0 & 0 & 0 & 0 & 1 & 0 & 1 & 0 & 1 & 0 & 1 & 0 & 0 & 0 & 0 & 0 & 0 & 0 & 1 & 0 & 0 & 0 & 0 & 0 \\
  0 & 0 & 0 & 1 & 0 & 0 & 0 & 0 & 0 & 0 & 0 & 1 & 0 & 0 & 0 & 0 & 0 & 0 & 0 & 0 & 0 & 0 & 0 & 0 & 0 & 0 & 0 & 0 & 0 & 0 & 0 \\
  0 & 0 & 0 & 0 & 0 & 0 & 0 & 0 & 0 & 1 & 1 & 0 & 0 & 0 & 0 & 0 & 0 & 0 & 0 & 0 & 0 & 1 & 1 & 0 & 0 & 0 & 0 & 1 & 1 & 0 & 0 \\
  0 & 0 & 1 & 0 & 0 & 0 & 0 & 0 & 0 & 0 & 0 & 0 & 0 & 1 & 1 & 0 & 0 & 0 & 0 & 2 & 1 & 1 & 1 & 0 & 0 & 1 & 1 & 0 & 0 & 0 & 0 \\
  0 & 0 & 1 & 1 & 1 & 0 & 0 & 2 & 1 & 1 & 1 & 1 & 1 & 0 & 0 & 0 & 0 & 0 & 0 & 0 & 0 & 0 & 0 & 0 & 0 & 0 & 0 & 0 & 0 & 0 & 0 \\
  0 & 0 & 0 & 1 & 1 & 0 & 0 & 2 & 1 & 0 & 0 & 1 & 1 & 0 & 0 & 0 & 0 & 0 & 0 & 0 & 1 & 0 & 0 & 0 & 0 & 0 & 0 & 1 & 1 & 0 & 0 \\
  0 & 0 & 0 & 0 & 1 & 1 & 0 & 0 & 0 & 0 & 0 & 0 & 1 & 1 & 0 & 0 & 0 & 0 & 0 & 0 & 0 & 0 & 0 & 0 & 1 & 1 & 0 & 0 & 0 & 0 & 0 \\
  0 & 0 & 0 & 0 & 1 & 0 & 1 & 0 & 0 & 0 & 0 & 0 & 1 & 0 & 0 & 1 & 0 & 1 & 0 & 0 & 0 & 0 & 0 & 1 & 0 & 0 & 0 & 0 & 0 & 0 & 0 \\
  0 & 0 & 0 & 0 & 0 & 1 & 0 & 0 & 0 & 0 & 0 & 0 & 0 & 0 & 1 & 1 & 0 & 1 & 0 & 0 & 0 & 0 & 0 & 0 & 1 & 0 & 1 & 0 & 0 & 0 & 0 \\
  0 & 0 & 0 & 0 & 0 & 0 & 1 & 0 & 0 & 0 & 0 & 0 & 0 & 0 & 0 & 0 & 0 & 0 & 0 & 0 & 0 & 0 & 0 & 1 & 0 & 0 & 0 & 0 & 0 & 0 & 0 \\
  0 & 2 & 1 & 0 & 0 & 0 & 0 & 0 & 0 & 0 & 0 & 0 & 0 & 0 & 0 & 1 & 1 & 1 & 1 & 0 & 1 & 1 & 1 & 0 & 0 & 0 & 0 & 0 & 0 & 0 & 0 \\
  0 & 0 & 0 & 0 & 0 & 0 & 0 & 0 & 0 & 1 & 1 & 0 & 0 & 0 & 0 & 0 & 0 & 0 & 0 & 0 & 0 & 1 & 1 & 0 & 0 & 0 & 0 & 1 & 1 & 0 & 0 \\
  0 & 0 & 0 & 0 & 0 & 0 & 0 & 0 & 1 & 0 & 0 & 0 & 0 & 1 & 1 & 0 & 0 & 0 & 0 & 2 & 1 & 0 & 0 & 0 & 0 & 1 & 1 & 1 & 1 & 0 & 0 \\
  0 & 2 & 1 & 0 & 0 & 0 & 0 & 0 & 1 & 1 & 1 & 0 & 0 & 0 & 0 & 1 & 1 & 1 & 1 & 0 & 0 & 0 & 0 & 0 & 0 & 0 & 0 & 0 & 0 & 0 & 0 \\
  0 & 0 & 0 & 1 & 0 & 1 & 0 & 0 & 0 & 0 & 0 & 1 & 0 & 0 & 0 & 0 & 1 & 0 & 1 & 0 & 0 & 0 & 0 & 0 & 1 & 0 & 0 & 0 & 0 & 0 & 0 \\
  0 & 0 & 0 & 1 & 0 & 0 & 1 & 0 & 0 & 0 & 0 & 1 & 0 & 0 & 1 & 0 & 0 & 0 & 0 & 0 & 0 & 0 & 0 & 1 & 0 & 0 & 1 & 0 & 0 & 0 & 0 \\
  0 & 0 & 0 & 0 & 0 & 1 & 0 & 0 & 0 & 0 & 0 & 0 & 0 & 0 & 0 & 0 & 0 & 0 & 0 & 0 & 0 & 0 & 0 & 0 & 1 & 0 & 0 & 0 & 0 & 0 & 0 \\
  0 & 0 & 0 & 0 & 0 & 0 & 1 & 0 & 0 & 0 & 0 & 0 & 0 & 1 & 0 & 0 & 1 & 0 & 1 & 0 & 0 & 0 & 0 & 1 & 0 & 1 & 0 & 0 & 0 & 0 & 0
        \end{array}\right)
      \)
    \caption{The collision matrix $\mat{C}_W$ for $v=11$ ($n=8$) and
      $W=\langle1,\alpha^{17},\alpha^{34}\rangle
      =\bigl\{x\in\F_{16};\trace(x)=0\bigr\}$}\label{fig:CWn=8}
  \end{figure}
  This case is particularly important, since it serves as the ``anchor''
  case for the family of packet lengths $v\equiv3\pmod{8}$ considered
  in Theorem~\ref{thm:n=0mod8} and is thus ``responsible'' for the
  constant $81/64$ in the bound in Part~\eqref{thm:main:n=0mod8} of our
  main theorem.

  \paragraph{$v\geq12$} In the cases $v=12,13$ we were still able
  to process all $G$-orbits and compute the absolute maximum local net gains
  directly; cf.\ Table~\ref{tbl:netgain}.
  For lengths $v>13$, however, the amount of calculation is too large
  for processing all $G$-orbits exhaustively. Hence in the remaining
  cases $v=14,15,16$ we have processed only those $G$-orbits which
  appeared to be most ``promising'' in the sense that the lower bound
  in Corollary~\ref{cor:copt} is largest. The lower bound tends to be
  an increasing function of the number $\mu$ of missing points
  contained in the collision space $(W^2)^\perp$ and, in the case of
  odd $v$ (even $n$) to be maximized for the planes $W$ discussed in
  Theorem~\ref{thm:cij=4}.
  
  In the case $v=14$ we processed all $513$ $G$-orbits with $\mu\geq 5$
  missing points in $(W^2)^\perp$. There are $381$, $118$, $14$
  $G$-orbits corresponding to $\mu=5,6,7$, respectively, and the
  absolute maximum local net gain $379$ is attained uniquely at a
  $G$-orbit with $\mu=7$ (as was to be expected).

  For $v=15$, we processed all $34$ $G$-orbits containing planes $W$
  as in Theorem~\ref{thm:cij=4}. It turned out
  the absolute maximum local net gain $924$ is attained at a particular
  $G$-orbit with $\mu=3$, i.e., all $4$ missing points of multiplicity
  $1$ outside $(W^2)^\perp$.

  Finally, for $v=16$ we just processed all $G$-orbits with $\mu=7$
  and found for those an absolute maximum local net gain of
  $1526$. This is better than the LMRD code bound, which is equivalent
  to a local net gain of $1365$.

  Thus it appears that $v=8,10$ are the only cases where the optimal
  solutions of the RRP have size smaller than the LMRD code bound; cf.\ also
  Conjecture~\ref{conj:LMRD} in Section~\ref{sec:bingo}.
  

  \begin{table}[htbp]
  \centering
  \(
  \renewcommand{\arraystretch}{1.25}
  \begin{array}{|r|r||r|r|r|r|c|}
  \hline
  \multicolumn{1}{|c|}{v} 
    &\multicolumn{1}{c||}{n} 
    &\multicolumn{1}{c|}{\#\text{$G$-orbits}}
    &\multicolumn{1}{c|}{N_1} 
    &\multicolumn{1}{c|}{(N_1)_{\text{LMRD}}}
    &\multicolumn{1}{c|}{\#\mathcal{C}}
    &W\\\hline\hline
  7 &  4  & 1 & 3 & 2.33 & 2^8+45 &\langle1,\alpha,\alpha^2\rangle  \\ \hline
  8 &  5  & 1 & 3 & 5.00 & 2^{10}+93 &\langle1,\alpha,\alpha^2\rangle \\ \hline
  9 &  6  & 7 & 12 & 10.33 & 2^{12}+756 &\langle1,\alpha,\alpha^2\rangle
\\ \hline
  10 & 7  & 15 & 20 & 21.00 & 2^{14}+2540
&\langle1,\alpha,\alpha^{22}\rangle
    \\ \hline
  11 & 8  & 53 & 54 & 42.33 & 2^{16}+13770
    &\langle1,\alpha^{17},\alpha^{34}\rangle \\ \hline
  12 & 9  & 177 & 93 & 85.00 & 2^{18}+47523
    &\langle1,\alpha^3,\alpha^{71}\rangle \\ \hline
  13 & 10 & 633 & 234 & 170.33 & 2^{20}+239382  
    &\langle1,\alpha,\alpha^{49}\rangle \\ \hline
  14 & 11 & 513 & 379 & 341.00 & 2^{22}+775813
    &\langle1,\alpha^3,\alpha^{419}\rangle \\ \hline
  15 & 12 & 34 & 924 & 682.33 & 2^{24}+3783708 
    &\langle1,\alpha^{195},\alpha^{1170}\rangle \\ \hline
  16 & 13 & 240 & 1526 & 1365.00 & 2^{26}+12499466
    &\langle1,\alpha^{25},\alpha^{1208}\rangle \\ \hline
  \end{array}
  \)\\[1ex]
  \caption{Summary of maximum net gain computations}\label{tbl:netgain}
\end{table}

\section{Infinite Families of Subspace Codes Exceeding the 
LMRD Code Bound}\label{sec:bingo} 

We are now in a position to compute explicit lower bounds for the
maximum achievable net gain in the general RRP for packet lengths
$v\equiv 3\pmod{4}$ ($n\equiv 0\pmod{4}$), using a careful choice for
the plane $W$. It turns out that the corresponding modified subspace
codes exceed the LMRD code bound. The analysis will be split into
two cases depending on $v\bmod 8$. 
We start with the easier case $v\equiv 7\pmod{8}$.

\begin{theorem}
  \label{thm:n=4mod8}
  For packet lengths $v\equiv 7\pmod{8}$, i.e., $n=v-3\equiv
  4\pmod{8}$, the maximum achievable local net gain $N_1$ in the
  general RRP satisfies $N_1\geq 3\cdot 2^{n-4}=3\cdot 2^{v-7}$, and hence the
  corresponding optimum subspace codes have size
  \begin{equation*}
    \#\mathcal{C}\geq 2^{2(v-3)}+3\cdot 2^{v-7}(2^{v-3}-1).
  \end{equation*}
\end{theorem}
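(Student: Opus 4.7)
The plan is to take $W$ to be the trace-zero hyperplane of the subfield $\F_{16}\subset\F_{2^n}$ (a $3$-dimensional $\F_2$-subspace, available since $n\equiv 4\pmod{8}$ forces $4\mid n$) and to extract the desired bound from Corollary~\ref{cor:copt} by exploiting the transparent algebraic structure of the resulting collision matrix $\mat{C}_W$. The key observation is that under the hypothesis $n/4$ odd, the collision space $C=(W^2)^\perp$, the missing points of $\sickson_W$, and the trace-zero hyperplane $H_2$ of $\F_{2^n}$ align particularly cleanly.

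First I would identify the relevant data. Since $W$ is Frobenius-invariant inside $\F_{16}$, $W^2=W$ and hence $C=W^\perp$. Using transitivity of the trace together with $n/4$ odd, one checks $C\cap\F_{16}=\F_2$ and $W\subset H_2$, so that $H_2/W$ is a hyperplane of the quotient $\F_{2^n}/W$. The seven missing points of $\sickson_W$ were essentially computed in Example~\ref{ex:v=7cont}: with $\xi$ a primitive element of $\F_{16}$ they are, counted with multiplicity, $1,1,1$ together with the four primitive fifth roots of unity $\xi^{3k}$ ($k=1,2,3,4$). Only the triple missing point $1$ lies in $C$, so $\mu=3$, and for each of the three corresponding lines $Z\subset W$ the hyperplane $H_Z$ coincides with $H_2$ (its defining equation $\trace(x^2)=0$ reduces to $\trace(x)=0$).

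Applying Theorem~\ref{thm:HZ}\eqref{thm:HZ:enum} then yields the row-sum spectrum of $\mat{C}_W$: any solid $T_i\supset W$ has $\nu_i\in\{0,3\}$ according as $T_i\not\subset H_2$ or $T_i\subset H_2$, giving $r_i\in\{4,10\}$ respectively, with multiplicities $2^{n-4}$ and $2^{n-4}-1$. Taking $\vek{x}$ to be the indicator of $I=\{T_i:T_i\not\subset H_2\}$ (of size $2^{n-4}$), the first summand in the objective~\eqref{eq:copt} contributes $|I|\cdot(6-4)=2^{n-3}$.

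The crux is to prove $\wham(\vek{x}\mat{C}_W)\geq 2^{n-4}$. By Theorem~\ref{thm:C}\eqref{thm:C:subspaces} the support of each type-$1^7$ column (indexed by $y\in C^\times\setminus\{1\}$) is a plane $\pi_y$ in $\F_{2^n}/W$, whose intersection with the hyperplane $H_2/W$ is either all of $\pi_y$ or a line; consequently the number $s_y$ of support elements of column~$y$ lying in rows of $I$ belongs to $\{0,4\}$. Counting ones: the total in $\mat{C}_W(I)$ is $4\cdot|I|=2^{n-2}$, of which $4$ come from the single entry of the type-$4^1$ column, since the special solid $T^*=\langle W,\xi^3W\rangle=\F_{16}$ carrying all four planes with $\sickson_W=1$ lies in $I$ (because $\F_{16}\not\subset H_2$ when $n/4$ is odd). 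The remaining $2^{n-2}-4$ ones sit in type-$1^7$ columns and by the dichotomy $s_y\in\{0,4\}$ occupy exactly $2^{n-4}-1$ of them; adding column~$1$ gives $\wham(\vek{x}\mat{C}_W)=2^{n-4}$. Corollary~\ref{cor:copt} then yields $N_1\geq 2^{n-3}+2^{n-4}=3\cdot 2^{n-4}$, and the stated bound on $\#\mathcal{C}$ follows from formula~\eqref{eq:netgain}. The main obstacle I expect is the bookkeeping around the special solid $T^*$, whose placement inside $I$ (rather than outside) is controlled exactly by the parity of $n/4$ and singles out the case $v\equiv 7\pmod 8$.
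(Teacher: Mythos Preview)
Your proof is correct and follows essentially the same route as the paper: the same choice of $W$ (the trace-zero plane of $\F_{16}$), the same identification $\mu=3$, the same row-sum spectrum $4^{2^{n-4}}10^{2^{n-4}-1}$ via Theorem~\ref{thm:HZ}, and the same geometric observation that a support plane of a type-$1^7$ column meets the affine part of $\PG(\F_{2^n}/W)$ outside $H_2/W$ in at most $4$ points. The only difference is that the paper argues more crudely via the uniform column-sum bound $\leq 4$ (covering hypothetical type-$2^3$ columns as well) to obtain $m'\geq 2^{n-4}$, whereas you sharpen this to an exact count $\wham(\vek{x}\mat{C}_W)=2^{n-4}$ by explicitly locating the type-$4^1$ entry in the row $T^*=\F_{16}\in I$ and using the dichotomy $s_y\in\{0,4\}$; this refinement is sound but not needed for the stated bound.
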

\begin{proof}
  Since $n\equiv 4\pmod{8}$, $\F_{16}$ is a subfield of $\F_{2^n}$ and
  we can choose $W$ as the trace-zero plane in $\F_{16}$.\footnote{The
    actual choice of $W$ does not matter, since
    all planes in $\F_{16}$ are rotated copies of each other (with
    factors $r\in\F_{16}^\times\subseteq\F_{2^n}^\times$) and hence in
    the same $G$-orbit. The subsequent proof, however, is only valid
    for the trace-zero plane, since it uses $W^2=W$.}  Writing
  $\F_{16}=\F_2(\xi)$ with $\xi^4+\xi+1=0$ and $\omega=\xi^5$, we have
  $\F_4=\F_2(\omega)$,
  $W=\{0,1,\xi,\xi^2,\xi^4,\xi^5,\xi^8,\xi^{10}\}=\langle\xi,\omega\rangle$,
  and $\xi^2+\xi=\omega=\omega(\omega^2+\omega)$. This shows that $W$
  is of the type considered in Theorem~\ref{thm:cij=4} with
  $a=\omega$, $b=\xi$.\footnote{Strictly speaking, we should also
    check that $\trace(\omega)=\trace(\xi)=0$ but this is trivial,
    since $\trace(x)=(n/4)\trace_{\F_{16}/\F_2}(x)$ for
    $x\in\F_{2^4}\subseteq\F_{2^n}$.}  Further, from
  $W'=\bigl\{\dickson(Z);Z\subset W\}=W$ and $\dickson(W)=1$ we find
  that the set of missing points of $\sickson_W$ is
  $\bigl\{\dickson(W)/\dickson(Z)^3;Z\subset
  W\bigr\}=\{1,\xi^3,\xi^6,\xi^9,\xi^{12}\}$,
  the missing point of multiplicity $3$ being $1$.

  In what follows, since we have to deal with different traces simultaneously,
  we will adopt the simpler notation $\trace_{2^s}(x)=\trace_{\F_{2^n}/\F_{2^s}}(x)
  =x+x^{2^s}+x^{4^s}+\dotsb$ for $s\mid n$.
  
  The collision space $(W^2)^\perp=W^\perp$ is easily seen to be
  $\bigl\{y\in\F_{2^n};\trace_{16}(y)\in\F_2\bigr\}$
  and intersects $W$ in $\F_2$.\footnote{Here we use that
  $[\F_{2^n}/\F_{16}]=n/4$ is odd and hence
  $\trace(y)=\trace_{16}(y)$ for
  $y\in\F_{16}\subseteq\F_{2^n}$.} This shows that $1$ is the only
  missing point in $(W^2)^\perp$.

  Now Theorem~\ref{thm:HZ}\eqref{thm:HZ:enum} implies that $\mat{C}_W$
  has row sums $4$ and $10$ with corresponding frequencies
  $f_4=2^{n-4}$ and $f_{10}=2^{n-4}-1$. The $2^{n-4}\times(2^{n-3}-1)$
  submatrix $\mat{C}_W(I)$ formed by the rows of weight $4$ has column
  sums $\leq 4$, since the supporting lines and planes in
  $\PG(\F_{2^n}/W)$ (cf.\ Theorem~\ref{thm:C}\eqref{thm:C:subspaces})
  meet the affine subspace $\{T_i;i\in I\}$ in at most $2$ points
  (resulting in a column sum $\leq 2+2=4$), respectively, at most $4$
  points (column sum $\leq 1+1+1+1=4$).\footnote{For the column of
    Type~$4^1$ the bound is trivial.} Hence the
  number of nonzero columns of $\mat{C}_W(I)$ must be at least
  $2^{n-4}$, and we can take $m'=2^{n-4}$ in Corollary~\ref{cor:copt}
  to conclude that
  \begin{equation*}
    N_1\geq 2^{n-4}(6-4)+2^{n-4}=3\cdot 2^{n-4}.
  \end{equation*}
  This completes the proof.
\end{proof}
Part~\eqref{thm:main:n=4mod8} of our main theorem now follows from
Theorem~\ref{thm:n=4mod8} and
\begin{equation*}
  3\cdot
  2^{v-7}(2^{v-3}-1)>\frac{9}{8}\cdot\frac{(2^{v-4}-1)(2^{v-3}-1)}{3}
  =\frac{9}{8}\gauss{v-3}{2}{2}.
\end{equation*}
\begin{remark}
  \label{rmk:n=4mod8}
  In the smallest case $v=7$, in which $\F_{2^n}$ coincides with the
  subfield $\F_{16}$, the maximum local net gain is equal to $3$; cf.\
  Example~\ref{ex:v=7cont}.
  Theorem~\ref{thm:n=4mod8} gives a lower bound for the maximum
  net gain at lengths $v=7+8t$, $t=1,2,\dots$, which scales nicely with
  $v$ and thus can be viewed as ``anchored'' at $v=7$. Indeed, the
  proof of the theorem involves only computations in the subfield
  $\F_{16}$, no matter how large $\F_{2^n}$ is. This point of of view
  will become essential in the case $v\equiv3\pmod{8}$; see the next
  theorem. However, it should be noted that these observations only
  give lower bounds for the maximum net gain and that the actual
  maximum net gain can be substantially larger. For example, in the
  case $v=15$ the maximum net gain is $924>3\cdot 2^8=768$; cf.\
  Table~\ref{tbl:netgain}.
\end{remark}
\begin{theorem}
  \label{thm:n=0mod8}
  For packet lengths $v\equiv 3\pmod{8}$, i.e., $n=v-3\equiv
  0\pmod{8}$, the maximum achievable local net gain $N_1$ in the
  general RRP satisfies $N_1\geq 54\cdot 2^{n-8}=54\cdot 2^{v-11}$, and hence the
  corresponding optimum subspace codes have size
  \begin{equation*}
    \#\mathcal{C}\geq 2^{2(v-3)}+54\cdot 2^{v-11}(2^{v-3}-1).
  \end{equation*}
\end{theorem}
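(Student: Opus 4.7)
The plan is to follow the blueprint of Theorem~\ref{thm:n=4mod8}, taking as the anchor case $v=11$ (rather than $v=7$) and handling the heavier bookkeeping forced by having \emph{every} one of the seven missing points in the collision space. Since $4\mid n$, the subfield $\F_{16}$ embeds in $\F_{2^n}$, and I would choose $W = \{x\in\F_{16}: \trace_{\F_{16}/\F_2}(x)=0\}$. By Theorem~\ref{thm:cij=4} this $W$ is of the ``$c_{ij}=4$'' type; writing $\F_{16}=\F_2(\xi)$ with $\xi^4+\xi+1=0$, the computation already performed in Example~\ref{ex:v=7} gives $\dickson(W)=1$ and missing multiset $\{1,1,1,\xi^3,\xi^6,\xi^9,\xi^{12}\}$.

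The new qualitative feature is that $n/4$ is now even, so $\trace_{\F_{2^n}/\F_{16}}$ vanishes identically on $\F_{16}$; hence $\F_{16}\subseteq(W^2)^\perp$ and $\mu = 7$. I would next invoke the code-theoretic interpretation of Theorem~\ref{thm:HZ}: the hull of the missing multiset equals $\F_{16}$, of $\F_2$-dimension $4$, so the associated $[7,4]$ binary code $C$ has a generator matrix with the missing points as columns (with ``$1$'' tripled). Enumeration of the $16$ codewords (mirroring the anchor case in Figure~\ref{fig:CWn=8}) yields the weight distribution $A_0=1,\,A_2=6,\,A_4=5,\,A_6=4$, and by scaling, $\mat{C}_W$ has row-sum spectrum $14^{2^{n-7}-1}\,10^{6\cdot 2^{n-7}}\,6^{5\cdot 2^{n-7}}\,2^{4\cdot 2^{n-7}}$.

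Next, I would apply Theorem~\ref{thm:copt} with $\vek{x}$ chosen as the indicator of the affine hyperplane $\{[t]\in\F_{2^n}/W:\trace_{\F_{2^n}/\F_2}(t)=1\}$. Since $\sqrt{y_Z}=1$ for each of the three $Z$'s with $y_Z=1$, the first codeword coordinate equals $\trace_{\F_{2^n}/\F_2}(t)$, so the rows thus selected correspond precisely to codewords with $c_1=1$---the eight codewords of weights $4$ and $6$, contributing $4\cdot 4\cdot 2^{n-7}=2^{n-3}$ to $\sum_i(6-r_i)x_i$. Because each column of $\mat{C}_W$ has a linear-subspace support in $\F_{2^n}/W$ (Theorem~\ref{thm:C}\eqref{thm:C:subspaces}), $\wham(\vek{x}\mat{C}_W)$ equals the number of columns whose support meets the chosen affine hyperplane, or equivalently, whose support is \emph{not} contained in $P:=\ker(\trace_{\F_{2^n}/\F_2})/W$. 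Establishing $N_1\geq 27\cdot 2^{n-7}=54\cdot 2^{n-8}$ then reduces to showing that at most $5\cdot 2^{n-7}-1$ columns have support entirely inside $P$.

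The hard part is this bound on ``bad'' columns. The lone column of type $4^1$ (for $y=1$) is automatically bad, because its single support point is the $T_i$ with $\nu=7$, which lies inside $H_Z=\ker(\trace_{\F_{2^n}/\F_2})$ for each of the three $Z$'s with $y_Z=1$. The four columns of type $2^3$ and the many columns of type $1^7$ require a careful analysis of their two- and three-dimensional supports, exploiting the explicit Fano-plane configuration of missing points in $\F_{16}$ together with the dependency $1+\xi^3+\xi^6+\xi^9+\xi^{12}=0$. I would anchor at $n=8$, where Figure~\ref{fig:CWn=8} directly gives exactly $9=5\cdot 2 -1$ bad columns and hence $m'=22$, and then lift to general $n$ via a block-structure argument exploiting the kernel $K=\ker(\trace_{\F_{2^n}/\F_{16}})/W$ (of $\F_2$-dimension $n-7$) of the map $\F_{2^n}/W\to C$: since column supports decompose along $K$-cosets and the code $C$ is unchanged, the bad-column count at general $n$ should remain within the admissible bound $5\cdot 2^{n-7}-1$. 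Verifying this clean scaling---in particular, controlling the many $1^7$-columns now indexed by $(W^2)^\perp\setminus\F_{16}$---is the most delicate step of the proof.
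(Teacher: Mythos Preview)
Your setup matches the paper's: same choice of $W$ (the trace-zero plane of $\F_{16}$), same $[7,4]$ code with weight distribution $(A_0,A_2,A_4,A_6)=(1,6,5,4)$, same row-sum spectrum, and the same overall plan of anchoring at $n=8$ and lifting. Your choice of $\vek{x}$ as the indicator of the affine hyperplane $\{\trace(t)=1\}$ is a slight variant of the paper's choice (which takes \emph{all} rows with $r_i\le 6$, i.e.\ the nine codeword types of weight $4$ and $6$ rather than your eight); since the extra codeword $(1111000)$ has $r_i=6$, both choices give the same $\sum_i(6-r_i)x_i=2^{n-3}$, and the affine-hyperplane picture does make the ``bad column'' criterion cleaner.

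The genuine gap is the lifting step, which you correctly identify as ``the most delicate'' but do not actually carry out. Your proposed mechanism---a block structure along $K=\ker(\trace_{\F_{2^n}/\F_{16}})/W$---is a \emph{row-side} decomposition (it is the kernel of the map $T_i\mapsto\text{codeword}$) and gives no control over how \emph{column} supports behave as $n$ grows. The columns for $y\in(W^2)^\perp\setminus\F_{16}$ are planes in $\F_{2^n}/W$ with no a~priori relation to $K$-cosets, so ``column supports decompose along $K$-cosets'' is not justified. The paper's argument instead works on the \emph{column} side: writing the entry equation as $\spol_W(x)=y^2w^4+yw$ with $w\in W\subset\F_{16}$, one observes that (a) membership in $I$ depends only on $\trace_{\F_{2^n}/\F_{256}}\bigl(\spol_W(x)\bigr)$, and (b) $\trace_{\F_{2^n}/\F_{256}}(y^2w^4+yw)=\trace_{\F_{2^n}/\F_{256}}(y)^2w^4+\trace_{\F_{2^n}/\F_{256}}(y)\,w$. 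Hence the column sum over $I$ depends only on $\trace_{\F_{2^n}/\F_{256}}(y)$, which reduces the column-sum spectrum to the $n=8$ case and gives the exact scaling $m'=22\cdot 2^{n-8}$. This $\trace_{256}$ reduction is the substantive idea you are missing. (A secondary issue: your appeal to Figure~\ref{fig:CWn=8} for ``$9$ bad columns'' uses the paper's $18$-row $I$, not your $16$-row affine hyperplane; since your $I$ omits the two rows for codeword $(1111000)$, you would need to check separately that these rows do not uniquely cover any of the $22$ nonzero columns.)
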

\begin{proof}
  Again taking $W$ as the trace-zero plane in
  $\F_{16}\subset\F_{2^n}$, the proof remains the same as for
  Theorem~\ref{thm:n=4mod8} up to the point where the collision space
  is computed. The explicit formula for $(W^2)^\perp=W^\perp$ obtained
  earlier remains valid, but now the elements in $\F_{16}$ have trace
  zero and hence are in $(W^2)^\perp$. In particular $(W^2)^\perp$ now
  contains all $5$ missing points, and their geometric configuration
  must be taken into account. From $\xi^{12}=\xi^9+\xi^6+\xi^3+1$ it
  is clear that the $5$ points form a projective basis of their hull
  $M=\F_{16}$ (i.e., are $5$ points in general position). Giving the
  triple point homogeneous coordinates $(1:1:1:1)$, the corresponding
  linear $[7,4]$ code has generator matrix
  \begin{equation*}
    \begin{pmatrix}
      1&0&0&0&1&1&1\\
      0&1&0&0&1&1&1\\
      0&0&1&0&1&1&1\\
      0&0&0&1&1&1&1
    \end{pmatrix}
  \end{equation*}
  and weight distribution $A_0=1$, $A_2=6$, $A_4=5$, $A_6=4$. The
  corresponding row-sum spectrum is $m_{14}=2^{n-7}-1$, $m_{10}=6\cdot
  2^{n-7}$, $m_6=5\cdot 2^{n-7}$, $m_2=4\cdot 2^{n-7}$. As before, let
  $\mat{C}_W(I)$ be the submatrix of $\mat{C}_W$ formed by the rows
  with $r_i\leq 6$, i.e.\ $r_i=2$ and $r_i=6$. Our goal is to
  establish a lower bound $m'$ on the number of nonzero columns of
  $\mat{C}_W(I)$, which is more difficult in this case.

  First we note that the solids $\{T_i;i\in I\}$ are determined by
  $\trace(x)=1$ (corresponding to the codewords with $1$ or $3$ nonzero
  entries among the first $4$ coordinates) or $\trace(\xi^{3t}x)=1$ for
  $1\leq t\leq 4$ (corresponding to the codeword $(1111000)$). In
  $\F_{2^n}/M^\perp\cong\PG(3,\F_2)$ these solids determine $9$
  points, the first $8$ of which form an affine subspace (complement
  of the plane $\trace(x)=0$).
  
  Since $M=\F_{16}$, we have $(M^{1/2})^\perp=M^\perp=\F_{16}^\perp
  =\bigl\{x\in\F_{2^n};\trace_{16}(x)=0\bigr\}$ and can
  express the conditions in terms of $\trace_{16}(x)$. Using
  $\trace(x)=\trace_{\F_{16}/\F_2}\bigl(\trace_{16}(x)\bigr)$, we find
  that the last point has equation $\trace_{16}(x)=1$ and the $9$
  points are those with $\trace_{16}(x)\in(\F_{16}\setminus
  W)\cup\{1\}$.
  
  Since $\spol_W(X)=X^8+X^4+X^2+X$, $W'=W$, and $\dickson(W)=1$, we
  have from the proof of Theorem~\ref{thm:C}\eqref{thm:C:types} that
  the entry of $\mat{C}_W$ corresponding to $T=\langle W,x\rangle$ and
  $y\in W^\perp$ is the number of solutions of the equation
  \begin{equation}
    \label{eq:f(x)}
    x^8+x^4+x^2+x=y^2w^4+yw\quad\text{in $W$}.
  \end{equation}
  The above conditions on $\trace_{16}(x)$ translate into conditions
  on $\trace_{256}(x^8+x^4+x^2+x)$; the first into 
  $\trace_{16}(x^8+x^4+x^2+x)=\trace(x)=1$, which is equivalent to
  $\trace_{256}(x^8+x^4+x^2+x)
  \in\bigl\{t\in\F_{256};\trace_{\F_{256}/\F_{16}}(t)=1\bigr\}=t_0+\F_{16}$;
  and the second into
  $\trace_{256}(x^8+x^4+x^2+x)^2+\trace_{256}(x^8+x^4+x^2+x)
  =\trace_{256}(x^{16}+x)=\trace_{16}(x)=1$, i.e.\ 
  $\trace_{256}(x^8+x^4+x^2+x)\in\F_4\setminus\F_2$.
  
  On the other hand,
  \begin{equation*}
    \trace_{256}(y^2w^4+yw)=\trace_{256}(y)^2w^4
    +\trace_{256}(y)w    
  \end{equation*}
  depends only on $\trace_{256}(y)$ and hence is constant on cosets of
  $H_{256}=\bigl\{x\in\F_{2^n};\trace_{256}(x)=0\bigr\}$. 

  Putting the preceding observations together, we conclude
  that the total number of solutions of \eqref{eq:f(x)} with $T_i=\langle
  W,x\rangle$ varying over $i\in I$ is constant on cosets of
  $H_{256}$ as well. This means that the frequencies in the column-sum
  spectrum of $\mat{C}_W(I)$ are obtained from those for $n=8$
  by scaling with $2^{n-8}$. 
  In particular, the number of nonzero columns of $\mat{C}_W(I)$ is
  $2^{n-8}\cdot t$, where $t$ is the corresponding number for the case
  $n=8$. For $n=8$ we find by inspecting $\mat{C}_W$ in
  Figure~\ref{fig:CWn=8} that the $18\times 31$ submatrix
  $\mat{C}_W(I)$ has $22$ nonzero columns ($16$ columns of Type $1^4$
  and $6$ columns of Type $1^2$), resulting in $m'=22$ and
  \begin{equation*}
    N_1\geq 8(6-2)+22=54.\footnotemark
  \end{equation*}
  \footnotetext{In fact $N_1=54$, as shown in
    Section~\ref{sec:comp}. With some effort the column types of
    $\mat{C}_W(I)$ can also be computed by hand. It turns out that the
    $16$ columns corresponding to $y\in W^\perp$ with
    $\trace_{16}(y)=1$ are those of type $1^4$, and the $6$ columns
    corresponding to $y\in W\setminus\F_2\subset\F_{16}$ (which have
    $\trace_{16}(y)=0$) are those of type $1^2$.}%
  In the general case the bound then scales to
  $N_1\geq 2^{n-8}\cdot 54$, as asserted.
\end{proof}
Again comparing the bound of Theorem~\ref{thm:n=0mod8} with the LMRD code
bound, we obtain
\begin{equation*}
  54\cdot
  2^{v-11}(2^{v-3}-1)>\frac{81}{64}\cdot\frac{(2^{v-4}-1)(2^{v-3}-1)}{3}
  =\frac{81}{64}\gauss{v-3}{2}{2}.
\end{equation*}
This proves Part~\eqref{thm:main:n=0mod8} of our main theorem.

The computational results presented in Table~\ref{tbl:netgain} show
that the largest subspace codes obtained by solving the RRP exceed the
LMRD code bound for all $v\in\{7,8,\dots,15\}$ except for $v=8$ and
$v=10$. Although the margin is rather narrow for $v\in\{12,14\}$, we
make the following
\begin{conjecture}
  \label{conj:LMRD}
  For any packet length $v\geq 7$, $v\notin\{8,10\}$, the largest
  subspace codes that can be obtained by solving the RRP exceed the
  LMRD code bound and thus are better than the codes resulting from
  the echelon-Ferrers construction and its variants.
\end{conjecture}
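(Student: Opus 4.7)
The plan is to handle each residue class of $n = v - 3 \pmod{4}$ separately, mirroring the subfield strategy of Theorems~\ref{thm:n=4mod8} and~\ref{thm:n=0mod8}, which already settle $n \equiv 0 \pmod 4$ (equivalently $v \equiv 3 \pmod 4$). Three classes remain: $n \equiv 2 \pmod 4$ (so $v$ odd, $v \equiv 1 \pmod 4$) and $n$ odd of either parity mod $4$. For each remaining class the strategy would be to anchor at a dimension $n_0$ where the conjecture has been verified computationally (Table~\ref{tbl:netgain}), fix a concrete optimum-attaining plane $W_{n_0} \subset \F_{2^{n_0}}$, and prove that an explicit lift $W_n \subset \F_{2^n}$ (defined using $W_{n_0}$ together with the tower $\F_{2^{n_0}} \subset \F_{2^n}$) satisfies a scaling estimate of the shape $N_1(W_n) \geq N_1(W_{n_0}) \cdot 2^{n - n_0}$ for all $n \geq n_0$ in the same residue class. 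The required inequality $N_1(W_n) \cdot (2^n - 1) > \gauss{n}{2}{2}$ is asymptotically equivalent to $N_1(W_{n_0}) > 2^{n_0-1}/3$ and is satisfied (albeit by a narrow margin) at every anchor appearing in Table~\ref{tbl:netgain}.

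For $n \equiv 2 \pmod 4$ (natural anchor $n_0 = 6$, $v = 9$) the field $\F_{2^n}$ still contains $\F_4$, and Theorem~\ref{thm:cij=4} guarantees a plane $W$ producing an entry $c_{ij} = 4$ in $\mat{C}_W$. The computational data indicates that in every open odd case $v \in \{9, 11, 13, 15\}$ the best planes belong to a $G$-orbit of this type, which strongly suggests taking $W_n = \F_2 \oplus \F_2 a \oplus \F_2 b$ with $b^2 + b = \omega(a^2 + a)$ and $a, b \in \F_{64} \subset \F_{2^n}$, chosen so as to realize the anchor optimum for $v = 9$. The scaling of the row-sum spectrum (via Theorem~\ref{thm:HZ}) and of the lower bound $m'$ on the number of nonzero columns of $\mat{C}_{W_n}(I)$ (via Theorem~\ref{thm:C}) would then follow from trace identities of the form $\trace_{2^{n_0}}(\varphi(x)) = \trace_2(x)$ for a suitable $\F_2$-linear $\varphi$, exactly as in the proof of Theorem~\ref{thm:n=0mod8}. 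For odd $n$ the analogous argument is more delicate because no subfield of order $4$ is present; here I would anchor at $n_0 = 11$ or $n_0 = 13$ (i.e., $v = 14$ or $v = 16$), attempt to parametrize the computationally found optimum $W_{n_0}$ algebraically---perhaps as the orbit under a subgroup of $G$ of a plane defined by a small number of polynomial relations---and lift it up the tower $\F_{2^{n_0}} \subset \F_{2^n}$ for $n$ in an appropriate arithmetic progression.

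The hard part, I expect, is precisely this odd-$n$ case. Without a canonical subfield of $\F_{2^n}$ containing $W_{n_0}$ in a Galois-stable way, the key trace identity underpinning the scaling argument of Theorem~\ref{thm:n=0mod8} is unavailable, and a direct analysis of how the seven missing points of $\sickson_{W_n}$ distribute inside $(W_n^2)^\perp$ as $n$ grows is required. A plausible fallback---and perhaps the correct long-term approach---is a probabilistic or averaging argument: using Theorems~\ref{thm:C} and~\ref{thm:HZ} to bound from below the expected local net gain over a uniformly random $G$-orbit, and applying a second-moment or concentration estimate to conclude that at least one $G$-orbit must exceed the LMRD threshold. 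Making this density-based argument quantitative enough to beat the ratio $\tfrac{1}{3}$ with a definite margin for every odd $n$, without the algebraic rigidity supplied by a subfield in the even-$n$ case, is in my view the genuine technical obstruction standing between the present work and a full proof of the conjecture.
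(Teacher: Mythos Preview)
The statement you are addressing is a \emph{conjecture}, not a theorem; the paper does not prove it and in fact lists it as the first open problem in Section~\ref{sec:conc}. The paper establishes the conjecture only for $v\equiv 3\pmod 4$ (via Theorems~\ref{thm:n=4mod8} and~\ref{thm:n=0mod8}), verifies it computationally for $v\le 16$, and for $v\equiv 1\pmod 4$ reduces it to the existence of a plane of the type in Theorem~\ref{thm:cij=4} having $\mu=3$---a condition checked numerically in Table~\ref{tbl:missing} up to $n=22$, with the remark that character sums might finish this case. The even-$v$ (odd-$n$) case is explicitly declared ``much harder'' and left entirely open.

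Your proposal is therefore a proof \emph{strategy} for an open problem, and it contains a structural gap that prevents it from closing. You propose to anchor at some $n_0$ and lift $W_{n_0}$ along the tower $\F_{2^{n_0}}\subset\F_{2^n}$, but this tower exists only when $n_0\mid n$, not merely when $n\equiv n_0\pmod 4$. Anchoring at $n_0=6$ with $a,b\in\F_{64}$ handles only those $n$ with $6\mid n$; within your target class $n\equiv 2\pmod 4$ this misses $n\equiv 2,10\pmod{12}$ entirely (for instance $n=10,14,22$). Likewise, anchoring at $n_0=11$ or $n_0=13$ for odd $n$ reaches only multiples of $11$ or $13$. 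Covering every $n$ in a residue class mod $4$ by subfield lifts would require infinitely many anchors, each needing its own computationally verified optimum and its own scaling lemma, so the scheme does not terminate. This is exactly why the paper's argument for $v\equiv 3\pmod 4$ succeeds: $\F_{16}\subset\F_{2^n}$ holds for \emph{every} $n\equiv 0\pmod 4$, and the dichotomy $n/4$ odd versus even (Theorems~\ref{thm:n=4mod8} and~\ref{thm:n=0mod8}) exhausts that class with a single anchor. Your fallback averaging idea is closer in spirit to what might eventually work for odd $n$, and the paper itself gestures toward something similar via the apparent binomial limiting distribution in Table~\ref{tbl:missing}; but as you correctly identify, making such an argument effective without any subfield rigidity is the real obstruction, and it remains open.
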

By Theorems~\ref{thm:n=4mod8} and~\ref{thm:n=0mod8},
Conjecture~\ref{conj:LMRD} is true for packet lengths
$v\equiv 3\pmod{4}$. For lengths $v\equiv 1\pmod{4}$, which correspond
to $n\equiv 2\pmod{4}$, the following considerations provide strong
evidence in support of Conjecture~\ref{conj:LMRD}.

Inspecting the proof of Theorem~\ref{thm:n=4mod8}, we see that the
argument remains valid for $n\equiv 2\pmod{4}$, provided there exists
a plane $W$ in $\PG(\F_{2^n})$ which satisfies the conditions of
Theorem~\ref{thm:cij=4} and has $\mu=3$, i.e., there is a triple
missing point inside $(W^2)^\perp$ and $4$ missing points outside
$(W^2)^\perp$. Using the explicit description of the missing points
for the planes $W_1=\langle 1,a,b\rangle$ considered in
Theorems~\ref{thm:HZ} and~\ref{thm:cij=4}, it is easy to test the condition
$P\in(W^2)^\perp$ and compute the number of planes $W_1$ with
$\mu=3$ for small values of $n$.\footnote{Choosing $c=1$ in
  Theorem~\ref{thm:HZ}\eqref{thm:HZ:mp}, the condition
  $P\in(W^2)^\perp$ reduces to $\trace(a_1/a_0^2)=0$ or, somewhat
  easier to handle, $\trace\bigl((b+\omega a+x)^{-3}\bigr)=0$; cf.\
  Theorem~\ref{thm:cij=4}\eqref{thm:cij=4:missing}.}

We have written a small SageMath worksheet for this job. The
results are shown in Table~\ref{tbl:missing}.
\begin{table}
  \centering
  \(\begin{array}{r|rrrrr}
    n\backslash\mu&3&4&5&6&7\\\hline
    4&1&0&0&0&0\\
    6&0&0&3&2&0\\
    8&0&8&12&0&1\\
    10&0&20&45&10&10\\
    12&32&100&90&96&23\\
    14&56&392&483&322&112\\
    16&320&1360&2136&1376&269\\
    18&1392&5388&8121&5546&1398\\
    20&5616&21900&32550&21840&5475\\
    22&22088&86240&131967&87362&21868  
  \end{array}\)
  \caption{Distribution of the number $\mu$
    of missing points inside $(W_1^2)^\perp$}
  \label{tbl:missing}
\end{table}
From the table we see that $\F_{2^n}$ contains a plane $W$ with a
triple missing point and $\mu=3$ for all
$n\in\{12,14,\dots,22\}$. Moreover, the (shifted) frequency distribution of
planes $W_1$ with $\mu$ missing points, normalized by the total number
$(2^{n-2}-1)/3$ of planes $W_1$ (cf.\
Theorem~\ref{thm:cij=4}\eqref{thm:cij=4:part}), seems to converge to
the binomial distribution $(1/16,4/16,6/16,4/16,1/16)$.

In particular, Conjecture~\ref{conj:LMRD} is also true for
$n\in\{10,14,18,22\}$, i.e., for packet lengths
$v\in\{13,17,21,25\}$.\footnote{The case $n=10$, where no plane $W_1$
  with $\mu=3$ exists, is covered by Table~\ref{tbl:netgain}.}

It may be possible to prove Conjecture~\ref{conj:LMRD} for
$v\equiv1\pmod{4}$ with the aid of
character sums and the observations in Remark~\ref{rmk:abomega}. The
case of even $v$, however, seems to be much harder.

\section{The Significance of the Associated 
Linear Code}\label{sec:code}

From the discussion following Theorem~\ref{thm:HZ} and the previous
section we know already that the linear $[\mu,k]$ code $C=C_W$
associated to the multiset of missing points contained in the
collision space $(W^2)^\perp$ plays an important role in computing the
maximum net gain $N_1$ of the RRP relative to $W$. In this section we
add further evidence to this by using $C$ to express the bounds of
Corollary~\ref{cor:copt} in terms of $\mu$, $k$ and showing that this
refinement suffices to complete the solution of the RRP for
$v\in\{14,15\}$; this question was left open in Section~\ref{sec:comp}.

The following lemma is implicit in the remarks following
Theorem~\ref{thm:HZ}.
\begin{lemma}
  \label{lma:copt}
  Given a plane $W$ in $\PG(\F_{2^v})$, let $C$ be a
  binary linear $[\mu,k]$ code associated with the multiset $\mset{m}$
  of missing points of $\sickson_W$ contained in $(W^2)^\perp$ (i.e.,
  $\mu$ is the cardinality of $\mset{m}$ and $k$ the dimension of its
  hull $M$). Then the quantity $\sum_{i=0}^5m_r(6-r)$ in
  Corollary~\ref{cor:copt} can be expressed in terms of the weight
  distribution $A_0,\dots,A_\mu$ of $C$ as follows:
  \begin{equation*}
    \sum_{r=0}^5m_r(6-r)=2^{n-3-k}\times\sum_{i>(\mu+1)/2}(2i-1-\mu)A_i.\footnotemark
  \end{equation*}
  \footnotetext{In the case $\mu=0$, where all rows of $\mat{C}_W$
    have sum $7$, the right-hand side should be
    interpreted as zero.}%
\end{lemma}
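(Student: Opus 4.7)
The plan is to translate the row-sum spectrum of $\mat{C}_W$ directly into the weight distribution of $C$ via Theorem~\ref{thm:HZ}, and then extract the restricted sum by straightforward bookkeeping. First I would invoke Theorem~\ref{thm:HZ}\eqref{thm:HZ:enum} to write $r_i = 7-\mu+2\nu_i$, where $\nu_i$ counts the hyperplanes $H_Z$ containing $T_i$ among the $\mu$ hyperplanes with $H_Z\supset W$ (i.e., those coming from missing points $P_1,\dots,P_\mu\in(W^2)^\perp$). Choosing a coset representative $t_i\in\F_{2^n}$ of $T_i/W$, the quantity $\mu-\nu_i$ equals the number of indices $j$ with $t_i\notin H_{Z_j}$, which by Theorem~\ref{thm:HZ}\eqref{thm:HZ:eq} is the Hamming weight of
\begin{equation*}
    \phi(t_i) = \bigl(\trace(P_1 t_i^2),\dots,\trace(P_\mu t_i^2)\bigr)\in\F_2^\mu.
\end{equation*}
Hence $r_i = 7+\mu-2\wham\bigl(\phi(t_i)\bigr)$.

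Next, I would identify the image of the $\F_2$-linear map $\phi\colon\F_{2^n}/W\to\F_2^\mu$ with the code $C$. The kernel of $\phi$ is $(M^{1/2})^\perp/W$, of dimension $n-3-k$, so $\image(\phi)$ has dimension $k$. Using the non-degenerate pairing $(P,t)\mapsto\trace(Pt^2)$ between $M$ and $\F_{2^n}/(M^{1/2})^\perp$, a generator matrix of $\image(\phi)$ is obtained by writing each $P_j$ as a column vector in a basis of $M$, which is precisely the standard description of a generator matrix of $C$. Thus $\image(\phi)$ agrees with $C$ up to coordinate permutation and in particular shares the weight distribution $(A_0,\dots,A_\mu)$. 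This identification is the only delicate point; everything else reduces to direct summation.

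Finally I would group the $2^{n-3}-1$ nonzero cosets $t_i$ according to $c:=\phi(t_i)$. Every fibre of $\phi$ has size $2^{n-3-k}$, so a given $c\in C$ is the image of $2^{n-3-k}$ nonzero cosets when $c\neq 0$ and of $2^{n-3-k}-1$ nonzero cosets when $c=0$. Rows with $r_i\leq 5$ come precisely from codewords of weight $j>(\mu+1)/2$ (since $7+\mu-2j\leq 5$ is equivalent to $2j\geq\mu+2$), and each such row contributes $6-r_i=2j-\mu-1$. Summing yields
\begin{equation*}
    \sum_{r=0}^{5} m_r(6-r) = 2^{n-3-k}\sum_{j>(\mu+1)/2}(2j-1-\mu)A_j,
\end{equation*}
as claimed. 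The zero codeword, whose fibre among nonzero cosets is one short, is harmless because its row sum $7+\mu\geq 7$ lies above the cutoff $r\leq 5$, so the off-by-one discrepancy is invisible in the final formula.
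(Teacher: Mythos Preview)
Your proof is correct and follows essentially the same approach as the paper. The paper's own proof is a two-line remark relying on the discussion after Theorem~\ref{thm:HZ} (the correspondence $r=\mu+7-2i$ and the scaling factor $2^{n-3-k}$); you have simply unpacked that discussion explicitly by constructing the map $\phi$, identifying its image with $C$, and verifying that the zero codeword does not interfere with the restricted sum.
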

\begin{proof}
  Just observe that $r,i$ are related by $6-r=6-(\mu+7-2i)=2i-1-\mu$
  and that the all-zero codeword of $C$, which corresponds to zero or
  more rows with sum $7+\mu$, does not contribute to either side.
\end{proof}
Planes $W$ of the type considered in Theorem~\ref{thm:cij=4} are
distinguished by the fact that the corresponding multiset $\mset{m}$
is not a set; equivalently, the associated $[\mu,k]$ code $C$ is not
projective, or $\dham(C^\perp)=2$.\footnote{Note that
  $\dham(C^\perp)=1$ is not possible, since by definition $C$ has no
  all-zero coordinates.}
\begin{lemma}
  \label{lma:codesum}
For projective $[\mu,k]$ codes $C$, in the parameter range of interest to
us, we have the following upper bounds on the ``code sums''
$\sum_{i>(\mu+1)/2}(2i-1-\mu)A_i$.
  \begin{equation*}
    \begin{array}{c|ccccccc}
      \mu\backslash k&1&2&3&4&5&6&7\\\hline
      1&0\\
      2&0&1\\
      3&0&0&2\\
      4&0&0&3&7\\
      5&0&0&2&10&14\\
      6&0&0&3&9&20&38\\
      7&0&0&0&8&20&40&76
    \end{array}
  \end{equation*}
  Moreover, these bounds are best possible.
\end{lemma}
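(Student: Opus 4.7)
The plan is to reduce the lemma to a finite enumeration and verify each table entry directly. First I would observe that the code sum $S(C) := \sum_{i > (\mu+1)/2}(2i - 1 - \mu)A_i$ depends only on the weight distribution of $C$ and hence is a monomial-equivalence invariant. Next I would use the standard bijection between equivalence classes of binary projective $[\mu,k]$ codes and $\PGL(k, \F_2)$-orbits on spanning $\mu$-subsets of $\PG(k-1, \F_2)$: the projectivity condition amounts to distinctness of the column multiset, i.e.\ $\dham(C^\perp) \geq 3$, which in turn is equivalent to the columns being $\mu$ distinct points of $\PG(k-1, \F_2)$. Since $\PG(k-1, \F_2)$ has at most $127$ points in the range of interest and $\mu \le 7$, the number of equivalence classes is small and can be enumerated exhaustively.

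For each cell $(\mu, k)$ of the table I would proceed as follows. The trivially empty cases $\mu < k$ or $\mu > 2^k - 1$ give bound $0$ vacuously. For the nonempty cells, enumerate the $\PGL(k, \F_2)$-orbits on spanning $\mu$-subsets (by hand for the very small parameters, via a short SageMath script otherwise), compute the weight distribution of each orbit representative, evaluate $S(C)$, and take the maximum; the ``best possible'' claim is automatic since each table value is then realised by the extremal code found in the enumeration. Several cells admit uniform descriptions which short-circuit the work: for $\mu = k$ the only orbit is the projective basis, giving $C = \F_2^\mu$ with $S = \sum_{i > \mu/2}(2i-1-\mu)\binom{\mu}{i}$, yielding the diagonal entries $0, 1, 2, 7, 14, 38, 76$; and for $(\mu, k) = (7, 3)$ the only orbit is the full Fano plane, giving the simplex $[7,3,4]$ code with all nonzero weights equal to $(\mu+1)/2 = 4$ and hence $S = 0$. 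For the remaining intermediate entries an extremal code in each case is exhibited during the enumeration; for instance $(\mu, k) = (7, 4)$ with bound $8$ is realised by the projective $[7,4]$ code with generator columns $e_1, e_2, e_3, e_4, e_1+e_2, e_3+e_4, e_1+e_2+e_3+e_4$ and weight distribution $(A_0, A_2, A_3, A_4, A_5) = (1, 2, 4, 5, 4)$, giving $S = 2 \cdot 4 = 8$.

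The main obstacle is not conceptual but purely clerical: certifying that no equivalence class has been overlooked for any of the twenty-odd nonempty cells. Two safeguards I would build in are (a) cross-checking the orbit counts produced by SageMath against the known classification of short binary projective codes, and (b) an independent LP upper bound on $S(C)$ obtained, for each $(\mu,k)$, by maximising the linear functional $S$ over the polytope of nonnegative formal weight distributions $(A_i)$ satisfying $A_0 = 1$, the moment identities $\sum_i A_i = 2^k$ and $\sum_i i A_i = \mu \cdot 2^{k-1}$, the MacWilliams identities, and the projectivity constraints $A_1^\perp = A_2^\perp = 0$. If this LP maximum agrees with the enumerated maximum for every cell then both the table entries and their optimality are simultaneously confirmed, and the lemma is proved.
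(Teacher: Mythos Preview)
Your proposal is correct and takes essentially the same approach as the paper: both reduce the lemma to an exhaustive enumeration of equivalence classes of projective $[\mu,k]$ codes for each table cell, handle the diagonal ($\mu=k$) and simplex ($\mu=2^k-1$) cells uniformly, and then work through the remaining intermediate cells case by case. The only differences are cosmetic---the paper carries out the enumeration entirely by hand via explicit systematic generator matrices (organised by $k=3,4,5,6$ and using the dual-code viewpoint to list the classes), whereas you propose to delegate part of it to SageMath and add an LP sanity check; neither of these changes the underlying argument.
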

For substituting the bounds into Lemma~\ref{lma:copt}, it is
convenient to normalize them by $2^{-k}$. The resulting normalized
upper bounds $\gamma_{\mu,k}$ are listed in the following table, in
order of increasing strength.
\begin{table}
  \centering
  \(\begin{array}{l|l}
    \multicolumn{1}{c|}{\gamma_{\mu,k}}&\multicolumn{1}{c}{(\mu,k)}\\\hline
    0.625&(5,4),\,(6,5),\,(7,5),\,(7,6)\\
    0.59375&(6,6),\,(7,7)\\
    0.5625&(6,4)\\
    0.5&(7,4)\\
    0.4375&(4,4),\,(5,5)\\
    0.375&(4,3),\,(6,3)\\
    0.25&(2,2),\,(3,3),\,(5,3)\\
    0&\text{otherwise}
  \end{array}\)
  \caption{Upper bounds on the normalized code sums of projective $[\mu,k]$
    codes}\label{tbl:ncodesum}
\end{table}

\begin{proof}[Proof of Lemma~\ref{lma:codesum}]
  The entries in the diagonal of the table are the code sums obtained for the
  trivial $[\mu,\mu]$ codes. The zero entries are due to the fact that
  a projective $[\mu,k]$ code has $\mu\leq 2^k-1$ and the simplex codes
  ($\mu=2^k-1$) have only codewords of weight $0$ and $(\mu+1)/2$.

  The remaining cases are settled in an ad hoc fashion,
  using codes with a systematic generator matrix $\mat{G}$. The code
  sums yet relevant are
  \begin{equation*}
    \begin{array}{c|c}
      \mu&\text{code sum}\\\hline
      4&A_3+3A_4\\
      5&2A_4+4A_5\\
      6&A_4+3A_5+5A_6\\
      7&2A_5+4A_6+6A_7
    \end{array}
  \end{equation*}
  \paragraph{$k=3$} Viewing the columns of $\mat{G}$ as points in the
  Fano plane $\PG(2,\F_2)$, we have to consider $2$ cases for $\mu=4$
  (a quadrangle and a line with one additional point, both of which
  have code sum $3$) and one case for $\mu=5,6$ (having maximum weight
  $4$ with $A_4=1$ and $A_4=3$ respectively).

  \paragraph{$k=4$} For $\mu=5$ the even-weight subcode of $\F_2^5$
  (with 5th column $(1111)^\tp$ in $\mat{G}$) is the unique code having
  code sum $10$. For $\mu=6$ there are $4$ equivalence classes of
  codes with non-systematic parts 
  $\left(
    \begin{smallmatrix}
      1&1&0&0\\
      0&0&1&1\\
    \end{smallmatrix}
  \right)^\tp$, 
  $\left(
  \begin{smallmatrix}
      1&1&0&1\\
      0&0&1&1\\
    \end{smallmatrix}
  \right)^\tp$,
  $\left(
    \begin{smallmatrix}
      1&0&1&1\\
      0&1&1&1\\
    \end{smallmatrix}
  \right)^\tp$, 
  $\left(
  \begin{smallmatrix}
      1&0&1&0\\
      0&1&1&0\\
    \end{smallmatrix}
  \right)^\tp$ and code sums $9$, $9$, $8$, $8$,
  respectively.\footnote{The equivalence classes are best viewed as
    equivalence classes of the corresponding dual $[6,2]$ codes. Note
    that $C$ is projective iff $C^\perp$ (which is not necessarily
    projective) has minimum weight $\geq 3$. In the case under
    consideration this restricts the column multiplicities of
    $C^\perp$ to values $\leq 3$.}

  For $\mu=7$ there are $5$ equivalence classes of
  codes with non-systematic parts 
  $\left(
    \begin{smallmatrix}
      1&0&0&1\\
      0&1&0&1\\
      0&0&1&1
    \end{smallmatrix}
    \right)$,
    $\left(
    \begin{smallmatrix}
      1&0&0&1\\
      0&1&1&0\\
      0&0&1&1
    \end{smallmatrix}
    \right)$,
    $\left(
    \begin{smallmatrix}
      1&0&1&1\\
      0&1&0&1\\
      0&1&1&0
    \end{smallmatrix}
    \right)$,
    $\left(
    \begin{smallmatrix}
      0&1&1&1\\
      1&0&1&1\\
      1&1&0&1
    \end{smallmatrix}
    \right)$,
    $\left(
    \begin{smallmatrix}
      0&1&1&0\\
      1&0&1&0\\
      1&1&0&0
    \end{smallmatrix}
    \right)$
  and code sums $4$, $8$, $8$, $6$, $6$,
    respectively.\footnote{Now the column multiplicities of $C^\perp$
      are $\leq 2$.}

  \paragraph{$k=5$} For $\mu=6$ the even-weight subcode of $\F_2^6$
  has code sum $20$ is the only such code. For $\mu=7$ there are $8$
  equivalence classes with non-systematic parts
  $\left(
    \begin{smallmatrix}
      1&1&0&0&0\\
      0&0&1&1&0\\
    \end{smallmatrix}
  \right)^\tp$, 
  $\left(
  \begin{smallmatrix}
      1&1&0&1&0\\
      0&0&1&1&0\\
    \end{smallmatrix}
  \right)^\tp$,
  $\left(
    \begin{smallmatrix}
      1&0&1&1&0\\
      0&1&1&1&0\\
    \end{smallmatrix}
  \right)^\tp$, 
  $\left(
  \begin{smallmatrix}
      1&0&1&0&0\\
      0&1&1&0&0\\
    \end{smallmatrix}
  \right)^\tp$,  
    $\left(
    \begin{smallmatrix}
      1&1&1&0&0\\
      0&0&0&1&1\\
    \end{smallmatrix}
  \right)^\tp$, 
  $\left(
  \begin{smallmatrix}
      1&1&1&0&1\\
      0&0&0&1&1\\
    \end{smallmatrix}
  \right)^\tp$,
  $\left(
    \begin{smallmatrix}
      1&1&0&0&1\\
      0&0&1&1&1\\
    \end{smallmatrix}
  \right)^\tp$, 
  $\left(
    \begin{smallmatrix}
      1&1&0&1&1\\
      0&0&1&1&1\\
    \end{smallmatrix}
  \right)^\tp$.
  The code sums are all $\leq 20$, with equality for the 6th and 8th
  equivalence class.

  \paragraph{$k=6$} Here we have only one case to consider, $\mu=7$.
  The even-weight subcode of $\F_2^7$ has code sum $28$ and is not
  ``optimal'' in this case. The codes with 7th column $(111110)^\tp$,
  $(111100)^\tp$ in their $\mat{G}$ have code sum $40$ and are the
  only such codes.
\end{proof}

\begin{theorem}
  \label{thm:n=11,12}
  For $v\in\{14,15\}$ the computed maximum local
  net gain in Table~\ref{tbl:netgain} ($379$ for $v=14$, $924$ for $v=15$) 
  represents the true maximum achievable net gain of the RRP. 
\end{theorem}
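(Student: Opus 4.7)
I plan to prove Theorem~\ref{thm:n=11,12} by combining Corollary~\ref{cor:copt} with Lemmas~\ref{lma:copt} and~\ref{lma:codesum} to obtain a uniform upper bound on $N_1 = N_1(W)$ parameterized only by $(\mu,k)$, and then checking that this bound falls strictly below the computed maxima for every plane $W$ outside the exhaustively processed $G$-orbits of Section~\ref{sec:comp}. Concretely, Lemma~\ref{lma:copt} rewrites $\sum_{r=0}^{5}m_r(6-r)$ as $2^{n-3-k}\cdot s(C_W)$ with $s(C_W)$ the code sum, and Lemma~\ref{lma:codesum} (equivalently, Table~\ref{tbl:ncodesum}) gives $s(C_W)\leq 2^{k}\gamma_{\mu,k}$ whenever $C_W$ is projective. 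Substituting into Corollary~\ref{cor:copt} yields
\begin{equation*}
 N_1 \;\leq\; 2^{n-3}\,\gamma_{\mu,k} + m, \qquad m=2^{n-3}-1.
\end{equation*}

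For $v=14$ (so $n=11$, $m=255$), Theorem~\ref{thm:C}\eqref{thm:C:types} rules out Type~$4^1$ columns because $n$ is odd, so $C_W$ is projective for every plane $W$. The search in Section~\ref{sec:comp} processed all $513$ $G$-orbits with $\mu\geq 5$ and produced $N_1=379$. Any unprocessed orbit has $\mu\leq 4$, and inspection of Table~\ref{tbl:ncodesum} for this range shows $\gamma_{\mu,k}\leq 7/16$ (the maximum occurring at $(\mu,k)=(4,4)$). Hence for any such $W$,
\begin{equation*}
 N_1 \;\leq\; 2^{8}\cdot\tfrac{7}{16} + 255 \;=\; 367 \;<\; 379.
\end{equation*}

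For $v=15$ (so $n=12$, $m=511$), the processed $34$ $G$-orbits are precisely those containing a plane $W$ admitting a Type~$4^1$ column; by Theorem~\ref{thm:cij=4}\eqref{thm:cij=4:gen}--\eqref{thm:cij=4:part} these are exactly the $G$-orbits of the rotated copies $rW_1$ with $W_1=\langle 1,a,b\rangle$ satisfying $b^2+b=\omega(a^2+a)$, and the search attained $N_1=924$. Any unprocessed plane $W$ therefore admits no Type~$4^1$ column, so $C_W$ is projective with $\mu\leq 7$; the largest entry of Table~\ref{tbl:ncodesum} in this range is $\gamma_{\mu,k}=5/8$, yielding
\begin{equation*}
 N_1 \;\leq\; 2^{9}\cdot\tfrac{5}{8} + 511 \;=\; 831 \;<\; 924.
\end{equation*}

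The only subtlety—and hence the main obstacle—is verifying that the computational search of Section~\ref{sec:comp} really does exhaust every $G$-orbit that could approach the thresholds $379$ and $924$; equivalently, that every orbit with $\mu\geq 5$ (for $v=14$) or with a Type~$4^1$ column (for $v=15$) is represented among the $513$ resp.\ $34$ processed orbits. For $v=14$ this is a direct bookkeeping check on the stratification by $\mu$, while for $v=15$ it is the content of Theorem~\ref{thm:cij=4}\eqref{thm:cij=4:part}, which describes the relevant planes as rotated copies of the $(2^{n-2}-1)/3$ planes $W_1\subset H_2$ enumerated there, every $G$-orbit of which was covered.
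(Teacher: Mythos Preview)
Your proposal is correct and follows essentially the same approach as the paper's own proof: both combine Corollary~\ref{cor:copt}, Lemma~\ref{lma:copt}, and the normalized code-sum bounds of Table~\ref{tbl:ncodesum} to obtain $N_1\leq 2^{n-3}\gamma_{\mu,k}+m$, then apply $\gamma_{\mu,k}\leq 7/16$ for the unprocessed orbits with $\mu\leq 4$ when $v=14$ and $\gamma_{\mu,k}\leq 5/8$ for the unprocessed (projective) orbits when $v=15$. Your treatment is in fact slightly more explicit than the paper's in justifying why $C_W$ is projective in each case (via Theorem~\ref{thm:C}\eqref{thm:C:types} for odd $n$, and via the characterization in Theorem~\ref{thm:cij=4} for $v=15$), but the argument is otherwise identical.
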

\begin{proof}
  (i) $v=14$ ($n=11$). It suffices to show that any plane $W$ in
  $\PG(\F_{2048})$ that has at most $4$ missing points in
  $(W^2)^\perp$ satisfies $N_1<379$.  For $\mu\leq 4$ the maximum
  value of $\gamma_{\mu,k}$ in Table~\eqref{tbl:ncodesum} is
    $\gamma_{4,4}=0.4375$. From Corollary~\ref{cor:copt},
    Lemma~\ref{lma:copt} and the table,
    the maximum net gain relative to $W$ satisfies
  \begin{align*}
    N_1&\leq\sum_{r=0}^5m_r(6-r)+255=256\gamma_{\mu,k}+255\\
    &< 1.4375\times 256=368,
  \end{align*}
  as desired.
  
  (ii) $v=15$ ($n=12$). Here we must show that any plane $W$ in
  $\PG(\F_{4096})$ that is not of the type considered in
  Theorem~\ref{thm:cij=4} has $N_1<924$. Since these planes are
  exactly those for which the associated $[\mu,k]$ code is projective,
  we can use the bound $\gamma_{\mu,k}\leq 0.625$ from
  Table~\eqref{tbl:ncodesum} in Lemma~\ref{lma:copt}. This gives
  \begin{equation*}
    \label{eq:muk}
    N_1<1.625\times 512=832,
  \end{equation*}
  completing the proof of the theorem.
\end{proof}

\section{Conclusion}\label{sec:conc}

We conclude this paper with a list of open problems related to our
work. Only the first problem has been discussed already (in
Section~\ref{sec:bingo}).

\begin{problem}
  \emph{Prove Conjecture~\ref{conj:LMRD}, either partially for odd
    packet lengths $v\equiv1\pmod{4}$ or in full.} The case of odd $v$
  (even $n$) seems more accessible in view of the availability of
  planes with a triple missing point and the overwhelming evidence for
  the existence of such planes with $\mu=3$, which would settle this
  part.  The case of even $v$ includes all cases where $\F_{2^n}$ has
  prime degree over $\F_2$ and hence no nontrivial subfields. In this
  case an approach different from that in
  Theorems~\ref{thm:n=4mod8},~\ref{thm:n=0mod8} must be used, perhaps
  starting with an existence proof of planes $W$ with a large code sum
  in their associated $[\mu,k]$ code (cf.\ Lemma~\ref{lma:copt}) and
  using the lower bound in Corollary~\ref{cor:copt} with a suitable
  constant $m'$. Note that in terms of the size of $\mat{C}_W$, the
  threshold for the local net gain set by the LMRD code bound is
  $\frac{2^{n-1}-1}{3}\approx\frac{4}{3}\times(2^{n-3}-1)$.
\end{problem}

\begin{problem}
  \emph{Improve the expurgation-augmentation method for small packet
    lengths $v$}.  Although our method represents an asymptotic
  improvement of the known constructions of $(v,M,4;3)_2$ codes for
  $v\mapsto\infty$, it is much inferior to the group-invariant
  computational constructions in \cite{braun-ostergard-wassermann15}
  for lengths $v\in\{8,9,10,11\}$. To some extent this can be remedied
  through adding a further computational extension step by planes
  meeting $S$ in a line (cf.\ the remarks in Section~\ref{sec:comp}),
  but the results remain inferior to
  \cite{braun-ostergard-wassermann15}, and with increasing length the
  method soon becomes infeasible.

  To overcome this problem, an algebraic description of the free
  planes relative to an optimal solution of the RRP (or a suitable
  subcode thereof, which avoids ``colliding planes'') would be
  desirable. Another approach, which for $v=8$ at least yields some
  improvement,\footnote{The largest $(8,M,4;3)_2$ code obtained has
    size $M=1286$, compared with $1259$ in Section~\ref{sec:comp} and
    $1326$ in \cite{braun-ostergard-wassermann15}.}  is to compute,
  relative to the expurgated lifted Gabidulin code, the set of all
  free planes meeting $S$ in a point and use a suitable maximum-clique
  algorithm to find the absolutely largest extension of the expurgated
  code by such planes. We have determined experimentally that usually
  there are indeed additional free planes (corresponding to
  non-standard rearrangements of the free lines into new
  planes). However, including those planes in the optimization
  problems destroys its rotation-invariance and makes it much more
  computationally expensive. Again an algebraic description of the set of
  all free planes may help to overcome this problem.
\end{problem}

\begin{problem}
  \emph{Use the expurgation-augmentation method with other LMRD codes
    or subsets thereof.}
  The Gabidulin codes $\gabidulin_W$ considered in this paper are not
  the only MRD codes with these parameters, provided that $v\geq 7$. 
  Therefore the question arises whether one can adapt the
  expurgation-augmentation method for use with other LMRD codes and,
  if so, what the maximum sizes of the corresponding modified subspace
  codes will be. Although it is not directly related to this question,
  the following observation made during the preparation of
  \cite{smt:fq11proc} may be of interest in this regard: One of the
  five isomorphism types of optimal $(6,77,4;3)_2$ codes, named Type~B
  in \cite{smt:fq11proc}, contains a
  set of $16$ planes disjoint from $S=\{\vek{0}\}\times\F_2^3$ at
  mutual subspace distance $4$. This set corresponds to a
  $4$-dimensional constant-rank-two subspace of $\F_2^{3\times 3}$ of
  the type discovered by Beasley \cite{beasley99}. Since Gabidulin
  codes in $\F_2^{3\times 3}$ do not contain such a ``Beasley code'',
  we have that $(6,77,4;3)_2$ codes of Type~B cannot be obtained by
  ordinary expurgation-augmentation as considered in this paper.
\end{problem}

\begin{problem}
  \emph{Generalize the expurgation-augmentation method to subspace
    codes of constant dimension $k>3$.}  As an example we consider the
  smallest length $v=8$, for which this problem is meaningful. For
  $v=8$ there are two cases with $k>3$, where $\smax_2(v,d;k)$ is
  unknown, viz.\ $(v,d;k)=(8,4;4)$ and $(8,6;4)$. In the first case
  the corresponding Gabidulin code $\gabidulin$ provides a set of
  $2^{12}$ solids, which are disjoint from $S=\{\vek{0}\}\times\F_2^4$
  and cover each plane in $\PG(\F_2^8)$ disjoint from $S$ exactly
  once. Since solids disjoint from $S$ contain $15$ such planes, while
  solids meeting $S$ in a point contain only $8$ such planes, it
  should be possible---at least in principle---to rearrange the planes
  in some subset of $\gabidulin$ into new solids meeting $S$ in a
  point and thereby increase the code size significantly. However, the
  details seem a lot more involved than in the case $k=3$, and
  suitable subsets of $\gabidulin$ have yet to be found. The same
  remark applies to the other parameter triple $(8,6;4)$, in which the
  corresponding Gabidulin code $\gabidulin$ consists only of $2^8$ solids
  disjoint from $S$ and covers each line disjoint from $S$ exactly
  once.\footnote{The currently best lower bound in this case is still
    the rather trivial $\smax_2(8,6;4)\geq 257$, coming from
    $\gabidulin\cup\{S\}$, and it may well give the true result.}
\end{problem}




\begin{thebibliography}{10}

\bibitem{beasley99}
\newblock L.~B. Beasley,
\newblock Spaces of rank-$2$ matrices over {$\GF(2)$},
\newblock \emph{Electronic Journal of Linear Algebra}, \textbf{5} (1999),
  11--18.

\bibitem{ben-sasson-etal14}
\newblock E.~Ben-Sasson, T.~Etzion, A.~Gabizon and N.~Raviv,
\newblock Subspace polynomials and cyclic subspace codes, 2014,
\newblock Preprint arXiv:1404.7739 [cs.IT].

\bibitem{ben-sasson-kopparty12}
\newblock E.~Ben-Sasson and S.~Kopparty,
\newblock Affine dispersers from subspace polynomials,
\newblock \emph{SIAM Journal on Computing}, \textbf{41} (2012), 880--914.

\bibitem{ben-sasson-etal10}
\newblock E.~Ben-Sasson, S.~Kopparty and J.~Radhakrishnan,
\newblock Subspace polynomials and limits to list decoding of
  {R}eed–{S}olomon codes,
\newblock \emph{IEEE Transactions on Information Theory}, \textbf{56} (2010),
  113--120.

\bibitem{berlekamp68}
\newblock E.~R. Berlekamp,
\newblock \emph{Algebraic coding theory},
\newblock McGraw-Hill, 1968.

\bibitem{nova2011}
\newblock Jan~De Beule and Leo Storme (eds.),
\newblock \emph{Current Research Topics in {G}alois Geometry},
\newblock Nova Science Publishers, 2011.

\bibitem{beutelspacher75}
\newblock A.~Beutelspacher,
\newblock Partial spreads in finite projective spaces and partial designs,
\newblock \emph{Mathematische Zeitschrift}, \textbf{145} (1975), 211--230,
\newblock Corrigendum, ibid. 147:303, 1976.

\bibitem{blackburn-etzion12}
\newblock S.~R. Blackburn and T.~Etzion,
\newblock The asymptotic behavior of {G}rassmannian codes,
\newblock \emph{IEEE Transactions on Information Theory}, \textbf{58} (2012),
  6605--6609.

\bibitem{braun-etal13}
\newblock M.~Braun, T.~Etzion, P.~R.~J. {\"O}sterg{\aa}rd, A.~Vardy and
  A.~Wassermann,
\newblock Existence of $q$-analogs of {S}teiner systems, 2013,
\newblock Preprint arXiv:1304.1462 [math.CO].

\bibitem{braun-ostergard-wassermann15}
\newblock M.~Braun, P.~{\"O}sterg{\aa}rd and A.~Wassermann,
\newblock New lower bounds for binary constant dimension subspace codes, 2015,
\newblock Preprint.

\bibitem{braun-reichelt14}
\newblock M.~Braun and J.~Reichelt,
\newblock $q$-analogs of packing designs,
\newblock \emph{Journal of Combinatorial Designs}, \textbf{22} (2014),
  306--321,
\newblock Preprint arXiv:1212.4614 [math.CO].

\bibitem{campbell-wehlau11}
\newblock H.~E. Campbell and D.~L. Wehlau,
\newblock \emph{Modular Invariant Theory},
\newblock Springer-Verlag, 2011.

\bibitem{cheng-gao-wan12}
\newblock Q.~Cheng, S.~Gao and D.~Wan,
\newblock Constructing high order elements through subspace solynomials,
\newblock in \emph{Proceedings of the twenty-third annual ACM-SIAM symposium on
  Discrete Algorithms ({SODA'12})},
\newblock Society for Industrial and Applied Mathematics, 2012,
\newblock 1457--1463.

\bibitem{cossidente-pavese16}
\newblock A.~Cossidente and F.~Pavese,
\newblock On subspace codes,
\newblock \emph{Designs, Codes and Cryptography},
\newblock Electronically published on Oct 30, 2014.

\bibitem{dembowski68}
\newblock P.~Dembowski,
\newblock \emph{Finite Geometries},
\newblock Springer-Verlag, 1968,
\newblock Classics in Mathematics Series, 1997.

\bibitem{derksen-kemper02}
\newblock H.~Derksen and G.~Kemper,
\newblock \emph{Computational Invariant Theory},
\newblock Springer-Verlag, 2002.

\bibitem{dickson11}
\newblock L.~E. Dickson,
\newblock A fundamental system of invariants of the general modular linear
  group with a solution of the form problem,
\newblock \emph{Transactions of the American Mathematical Society}, \textbf{12}
  (1911), 75--98.

\bibitem{eisfeld-storme00}
\newblock J.~Eisfeld and L.~Storme,
\newblock (partial) $t$-spreads and minimal $t$-covers in finite projective
  spaces, 2000,
\newblock Lecture notes, Ghent University.

\bibitem{el-zanati-jordon-seelinger-sissokho-spence10}
\newblock S.~El-Zanati, H.~Jordon, G.~Seelinger, P.~Sissokho and L.~Spence,
\newblock The maximum size of a partial $3$-spread in a finite vector space
  over {$\GF(2)$},
\newblock \emph{Designs, Codes and Cryptography}, \textbf{54} (2010), 101--107.

\bibitem{etzion13b}
\newblock T.~Etzion,
\newblock Problems on $q$-analogs in coding theory, 2013,
\newblock Preprint arXiv:1305.6126 [cs.IT].

\bibitem{etzion-silberstein09}
\newblock T.~Etzion and N.~Silberstein,
\newblock Error-correcting codes in projective spaces via rank-metric codes and
  {F}errers diagrams,
\newblock \emph{IEEE Transactions on Information Theory}, \textbf{55} (2009),
  2909--2919.

\bibitem{etzion-silberstein13}
\newblock T.~Etzion and N.~Silberstein,
\newblock Codes and designs related to lifted {MRD} codes,
\newblock \emph{IEEE Transactions on Information Theory}, \textbf{59} (2013),
  1004--1017,
\newblock Erratum ibid. 59(7):4730, 2013.

\bibitem{etzion-storme16}
\newblock T.~Etzion and L.~Storme,
\newblock Galois geometries and coding theory,
\newblock \emph{Designs, Codes and Cryptography}, \textbf{78} (2016), 311--350.

\bibitem{etzion-vardy11a}
\newblock T.~Etzion and A.~Vardy,
\newblock Error-correcting codes in projective space,
\newblock \emph{IEEE Transactions on Information Theory}, \textbf{57} (2011),
  1165--1173.

\bibitem{garey-johnson79}
\newblock M.~R. Garey and D.~S. Johnson,
\newblock \emph{Computers and Intractability: {A} Guide to the Theory of
  {NP}-Completeness},
\newblock W.\,H.~Freeman and Company, 1979.

\bibitem{guang-zhang14}
\newblock X.~Guang and Z.~Zhang,
\newblock \emph{Linear Network Error Correction Coding},
\newblock SpringerBriefs in Computer Science, Springer-Verlag, 2014.

\bibitem{heinlein-kiermaier-kurz-wassermann16}
\newblock D.~Heinlein, M.~Kiermaier, S.~Kurz and A.~Wassermann,
\newblock Tables of subspace codes, 2016,
\newblock Preprint arXiv:1601.02864 [math.CO].

\bibitem{hirschfeld98}
\newblock J.~W.~P. Hirschfeld,
\newblock \emph{Projective Geometries over Finite Fields},
\newblock 2nd edition,
\newblock Oxford University Press, 1998.

\bibitem{mt:alb80}
\newblock T.~Honold and M.~Kiermaier,
\newblock On putative $q$-analogues of the {F}ano plane and related
  combinatorial structures, 2015,
\newblock Preprint arXiv:1504.06688 [math.CO].

\bibitem{smt:alc15proc}
\newblock T.~Honold, M.~Kiermaier and S.~Kurz,
\newblock Constructions and bounds for mixed-dimension subspace codes, 2015,
\newblock Preprint arXiv:1512.06660 [math.CO].

\bibitem{smt:fq11proc}
\newblock T.~Honold, M.~Kiermaier and S.~Kurz,
\newblock Optimal binary subspace codes of length $6$, constant dimension $3$
  and minimum subspace distance $4$,
\newblock in \emph{Topics in Finite Fields. 11th International Conference on
  Finite Fields and their Applications, July 22--26, 2013, Magdeburg, Germany}
  (eds. G.~Kyureghyan, G.~L. Mullen and A.~Pott), vol. 632 of Contemporary
  Mathematics,
\newblock American Mathematical Society, 2015,
\newblock 157--176,
\newblock Preprint arXiv:1311.0464 [math.CO].

\bibitem{koetter-kschischang08}
\newblock R.~Koetter and F.~Kschischang,
\newblock Coding for errors and erasures in random network coding,
\newblock \emph{IEEE Transactions on Information Theory}, \textbf{54} (2008),
  3579--3591.

\bibitem{kohnert-kurz08}
\newblock A.~Kohnert and S.~Kurz,
\newblock Construction of large constant dimension codes with a prescribed
  minimum distance,
\newblock in \emph{Mathematical Methods in Computer Science. Essays in Memory
  of Thomas Beth} (eds. J.~Calmet, W.~Geiselmann and J.~M{\"u}ller-Quade),
\newblock no. 5393 in Lecture Notes in Computer Science, Springer-Verlag, 2008,
\newblock 31--42.

\bibitem{kschischang12}
\newblock F.~R. Kschischang,
\newblock An introduction to network coding,
\newblock in \emph{Network Coding: {F}undamentals and Applications} (eds.
  M.~M{\'e}dard and A.~Sprintson),
\newblock Elsevier Science Publishers, 2012,
\newblock chapter~1, 1--37.

\bibitem{kurz15}
\newblock S.~Kurz,
\newblock Improved upper bounds for partial spreads, 2015,
\newblock Preprint arXiv:1512.04297 [math.CO].

\bibitem{lt:0328}
\newblock H.~Liu and T.~Honold,
\newblock Poster: A new approach to the main problem of subspace coding,
\newblock in \emph{9th International Conference on Communications and
  Networking in China (ChinaCom 2014, Maoming, China, Aug.~14--16)}, 2014,
\newblock 676--677,
\newblock Full paper available as arXiv:1408.1181 [math.CO].

\bibitem{metsch00}
\newblock K.~Metsch,
\newblock Bose-{B}urton type theorems for finite projective, affine and polar
  spaces,
\newblock in \emph{Surveys in Combinatorics, 1999} (eds. J.~D. Lamb and D.~A.
  Preece),
\newblock no. 267 in London Mathematical Society Lecture Note Series, Cambridge
  University Press, 1999,
\newblock 137--166.

\bibitem{moore96}
\newblock E.~H. Moore,
\newblock A two-fold generalization of {F}ermat's theorem,
\newblock \emph{Bulletin of the American Mathematical Society}, \textbf{2}
  (1896), 189--199.

\bibitem{ore33a}
\newblock O.~Ore,
\newblock On a special class of polynomials,
\newblock \emph{Transactions of the American Mathematical Society}, \textbf{35}
  (1933), 559--584,
\newblock Corrigendum ibid. 36(2):275, 1934.

\bibitem{silberstein-trautmann15}
\newblock N.~Silberstein and A.-L. Trautmann,
\newblock Subspace codes based on graph matchings, {F}errers diagrams, and
  pending blocks,
\newblock \emph{IEEE Transactions on Information Theory}, \textbf{61} (2015),
  3937--3953.

\bibitem{silva-kschischang-koetter08}
\newblock D.~Silva, F.~Kschischang and R.~Koetter,
\newblock A rank-metric approach to error control in random network coding,
\newblock \emph{IEEE Transactions on Information Theory}, \textbf{54} (2008),
  3951--3967.

\bibitem{silva-kschischang-koetter10}
\newblock D.~Silva, F.~Kschischang and R.~Koetter,
\newblock Communication over finite-field matrix channels,
\newblock \emph{IEEE Transactions on Information Theory}, \textbf{56} (2010),
  1296--1306.

\bibitem{smith97}
\newblock L.~Smith,
\newblock Polynomial invariants of finite groups. a survey of recent
  developments,
\newblock \emph{Bulletin of the American Mathematical Society}, \textbf{34}
  (1997), 211--250.

\bibitem{storme-nakic16}
\newblock L.~Storme and A.~Naki{\'c},
\newblock On the extendability of particular classes of constant dimension
  codes,
\newblock \emph{Designs, Codes and Cryptography},
\newblock Electronically published on Aug 2, 2015.

\bibitem{trautmann-rosenthal10}
\newblock A.-L. Trautmann and J.~Rosenthal,
\newblock New improvements on the {E}chelon-{F}errers construction,
\newblock in \emph{Proceedings of the 19th International Symposium on
  Mathematical Theory of Networks and Systems (MTNS 2010)} (ed. A.~Edelmayer),
\newblock Budapest, Hungary, 2010,
\newblock 405--408,
\newblock Reprint arXiv:1110.2417 [cs.IT].

\bibitem{wang-xing-safavi-naini03}
\newblock H.~Wang, C.~Xing and R.~Safavi-Naini,
\newblock Linear authentication codes: {B}ounds and constructions,
\newblock \emph{IEEE Transactions on Information Theory}, \textbf{49} (2003),
  866--872.

\bibitem{wilkerson83}
\newblock C.~Wilkerson,
\newblock A primer on the {D}ickson invariants,
\newblock in \emph{Proceedings of the {N}orthwestern {H}omotopy {T}heory
  {C}onference ({E}vanston, {I}ll., 1982)}, vol.~19 of Contemp. Math.,
\newblock Amer. Math. Soc., Providence, RI, 1983,
\newblock 421--434.

\bibitem{xia-fangwei09}
\newblock S.-T. Xia and F.-W. Fu,
\newblock Johnson type bounds on constant dimension codes,
\newblock \emph{Designs, Codes and Cryptography}, \textbf{50} (2009), 163--172.

\end{thebibliography}

\def\cprime{$'$}
\providecommand{\href}[2]{#2}
\providecommand{\arxiv}[1]{\href{http://arxiv.org/abs/#1}{arXiv:#1}}
\providecommand{\url}[1]{\texttt{#1}}
\providecommand{\urlprefix}{URL }

\medskip
\medskip

\end{document}